\documentclass[11pt]{amsart}

\usepackage[utf8]{inputenc}  

\pagestyle{plain}

\usepackage{amsthm}
\usepackage{amsfonts}
\usepackage{amsmath}
\usepackage{amssymb}
\usepackage{mathtools} 
\usepackage{stmaryrd}
\usepackage{marvosym}

\usepackage{xcolor}
\definecolor{darkgreen}{rgb}{0,0.45,0}
\definecolor{darkred}{rgb}{0.75,0,0}
\definecolor{darkblue}{rgb}{0,0,0.6}
\usepackage[colorlinks,citecolor=darkgreen,linkcolor=darkred,urlcolor=darkblue]{hyperref}
\usepackage{url}
\usepackage{breakurl}
\usepackage[mathscr]{eucal}
\usepackage{enumerate} 
\usepackage[capitalize]{cleveref}

\textwidth = 6.5 in
\textheight = 9.5 in
\oddsidemargin = 0.0 in
\evensidemargin = 0.0 in
\topmargin = -.5 in
\headheight = 0 in
\headsep = 0.0 in
\parskip = 0.1in
\parindent = 0.0in


\usepackage{tikz}
\usetikzlibrary{arrows}
\tikzset
{
  diagram/.style=
  {
    matrix of math nodes,
    column sep=2.5em,
    row sep=2.5em,
    text height=1.5ex,
    text depth=.25ex
  },
  cross line/.style={preaction={draw=white,-,line width=6pt}},
  every to/.style={font=\footnotesize},
  cof/.style={>->},                
  fib/.style={->>},                
  inj/.style={right hook->},       
  surj/.style={-open triangle 60}, 
  eq/.style={-,double distance=1.7pt}
}


\makeatletter
\newenvironment{ctikzpicture}
{
  \begingroup
  \smallskip
  \par
  \centering
  \begin{tikzpicture}
}{
  \end{tikzpicture}
  \par
  \smallskip
  \endgroup
  \aftergroup\@afterindentfalse
  \aftergroup\@afterheading
}
\makeatother

\theoremstyle{plain}
\newtheorem{theorem}{Theorem}[section]

\newtheorem{proposition}[theorem]{Proposition}
\newtheorem{lemma}[theorem]{Lemma}

\theoremstyle{definition}
\newtheorem{definition}[theorem]{Definition}

\newtheorem{example}[theorem]{Example}
\newtheorem{examples}[theorem]{Examples}

\newtheorem{remark}[theorem]{Remark}

\newtheorem{construction}[theorem]{Construction}

\Crefname{corollary}{Corollary}{Corollaries}
\Crefname{theorem}{Theorem}{Theorems}
\Crefname{proposition}{Proposition}{Propositions}

\usetikzlibrary{matrix}

\tikzset
{
  diagram/.style=
  {
    matrix of math nodes,
    column sep=2.5em,
    row sep=2.5em,
    text height=1.5ex,
    text depth=.25ex
  },
  every to/.style={font=\footnotesize},
}
\makeatletter
\renewcommand{\paragraph}{\@startsection{paragraph}{4}{0mm}{-0.5\baselineskip}{-1ex}{\bf}}
\makeatother


\newcommand{\Cat}{\mathsf{Cat}} 
\newcommand{\hoCat}{\mathsf{hoCat}} 
\newcommand{\qCat}{\mathsf{qCat}} 
\newcommand{\Set}{\mathsf{Set}} 
\newcommand{\sSet}{\mathsf{sSet}} 
\newcommand{\weCat}{\mathsf{weCat}}

\newcommand{\C}{\mathscr{C}} 
\newcommand{\D}{\mathscr{D}} 
\newcommand{\M}{\mathscr{M}} 



\renewcommand{\L}{\mathrm{L}} 
\newcommand{\N}{\mathrm{N}} 
\newcommand{\Nf}{\mathrm{N}_\mathrm{f}} 



\newcommand{\cod}{\mathrm{cod}} 
\newcommand{\ev}{\mathrm{ev}}  
\newcommand{\Ho}{\mathrm{Ho}} 
\newcommand{\Hom}{\operatorname{Hom}} 
\newcommand{\ob}{\operatorname{Ob}} 
\newcommand{\op}{\mathrm{op}} 
\newcommand{\Sd}{\mathrm{Sd}} 


\newcommand{\bbN}{\mathbb{N}} 

\newcommand{\U}{\mathrm{U}} 

\newcommand{\adjoint}{\dashv} 

\renewcommand{\to}{\rightarrow} 
\newcommand{\we}{\sim}
\newcommand{\into}{\hookrightarrow} 
\newcommand{\iso}{\cong} 
\newcommand{\fib}{\twoheadrightarrow} 
\newcommand{\slice}{\! \downarrow \!}
\newcommand{\fibslice}{\!\! \mathrel{\rotatebox[origin=c]{270}{$\twoheadrightarrow$}}\!\! }



\newcommand{\mC}{\mathsf{C}}
\renewcommand{\U}{\mathrm{U}}
\newcommand{\Id}{\mathrm{Id}}
\newcommand{\Cxl}{\mathsf{Cxl}_{\Pi,\Sigma,\Id}} 
\newcommand{\cxt}{\mathsf{cxt}}
\newcommand{\ft}{\mathrm{ft}}


\newcommand{\tA}{\widetilde{A}}
\newcommand{\tB}{\widetilde{B}}
\newcommand{\tX}{\widetilde{X}}
\newcommand{\tY}{\widetilde{Y}}
\renewcommand{\S}{\mathcal{S}}

\newcommand{\w}{\mathrm{w}}


\begin{document}
\title{Locally cartesian closed quasicategories from type theory}
\author{Krzysztof Kapulkin}
\date{\today}
\subjclass[2010]{18G55 (primary); 03B70, 55U35 (secondary)}
\begin{abstract}
We prove that the quasicategories arising from models of Martin-L\"of type theory via simplicial localization are locally cartesian closed.
\end{abstract}
\maketitle

\section*{Introduction}\label{sec:introduction}

Intensional Martin-L\"of Type Theory with $\mathsf{\Pi}$-, $\mathsf{\Sigma}$-, and $\mathsf{Id}$-types \cite{martin-lof:bibliopolis} has been conjectured to be the internal language of locally cartesian closed quasicategories. A precise form of this conjecture would consist of giving a pair of functors between categories of suitable type theories and locally cartesian closed quasicategories, and showing that they constitute an equivalence (in the appropriate homotopical sense).

While the conjecture seems very natural to someone familiar with the ideas of Homotopy Type Theory, it has proven difficult, and at the moment even the constructions of the functors between type theories and quasicategories are known only partially. More precisely, using the ideas and results of Lumsdaine--Warren \cite{lumsdaine-warren:local-universes} and Gepner--Kock \cite{gepner-kock:univalence}, Cisinski and Shulman \cite{shulman:inverse-diagrams} showed that every locally presentable locally cartesian closed quasicategory can be presented by a model category admitting a model of type theory. The assumption of local presentability is essential in their proof.

The main contribution of the present paper is the other direction of the proposed correspondence. Precisely, we prove that the simplicial localization of any categorical model of type theory (that is, a contextual category with $\mathsf{\Pi}$-, $\mathsf{\Sigma}$-, and $\mathsf{Id}$-structure) regarded as a category with weak equivalences is a locally cartesian closed quasicategory.

The essential idea of the proof is the use of the construction of the quasicategory of frames, introduced by Szumi{\l}o \cite{szumilo:two-models}, allowing for an explicit description of the simplicial localization for a wide class of categories with weak equivalences, namely those that carry the structure of a (co)fibration category \cite{kapulkin-szumilo}. It is indeed the case that every categorical model of type theory can naturally be equipped with such a structure \cite{avigad-kapulkin-lumsdaine}, \cite{shulman:inverse-diagrams}.

The paper is organized as follows. In \Cref{sec:type-theory}, we review the background on models of type theory and abstract homotopy theory, and formulate our main theorem. In \Cref{sec:fib-cat}, we then gather several notions and lemmas concerning fibration categories and the construction of the quasicategory of frames. The two crucial properties of this construction are established in \Cref{sec:slices,sec:adjoints}, where we prove that, in a suitable sense, taking quasicategories of frames preserves slicing and adjoints, respectively. These results are then used in \Cref{sec:the-proof} for the proof of our main theorem.

\textbf{Acknowledgements.} This result would never have been possible without Karol Szumi{\l}o. I would like to thank him for sharing early drafts of his thesis with me, many illuminating discussions, and helpful comments on an early draft of this paper. I am also indebted to Peter LeFanu Lumsdaine, from whom I learned a lot about type theory and its categorical models, as well as about English grammar (in particular, where not, to put a comma). Last, but not least, I would like to thank my advisor, Tom Hales, for his patience and guidance and for making me the mathematician I am today. This paper is dedicated to my mother.

\section{Background and statement of the main theorem}\label{sec:type-theory}

In this section, we will review the necessary background, leading to the statement of our main theorem. The section is therefore divided into three subsections: the first two cover the background on categorical models of type theory (\Cref{sec:models-of-tt}) and abstract homotopy theory (\Cref{sec:aht}), respectively; and the last one contains the statement of our main result (\Cref{sec:statement}).

\subsection{Categorical models of type theory}\label{sec:models-of-tt} 

While the title of the paper suggests that we extract locally cartesian closed quasicategories from type theory, we will not work explicitly with syntactically presented type theories. Instead, we shall phrase all our results in the language of contextual categories equipped with additional structure corresponding to the logical constructors $\mathsf{\Pi}$, $\mathsf{\Sigma}$, and $\mathsf{Id}$ of type theory (see \cite{cartmell:thesis}, \cite{cartmell:generalised-algebraic-theories}, and \cite{streicher:semantics-book} for the relationship between these and syntactically presented type theories; and \cite[Sec.\ 2.2.1]{avigad-kapulkin-lumsdaine} for the statement of the conjectured correspondence). We begin by recalling the definition of a contextual category:

\begin{definition}
 A \textbf{contextual category} $\mC$ consists of the following data:
 \begin{enumerate}
  \item a small category, also denoted $\mC$, together with a grading on objects $\ob \mC = \coprod\limits_{n\in \bbN} \ob_n \mC$;
  \item an object $\diamond \in \ob_0\mC$;
  \item for each $n \in \bbN$, a map $\ft_n \colon \ob_{n+1}\mC \to \ob_n \mC$;
  \item for each $n \in \bbN$ and each $X \in \ob_{n+1}\mC$, a map $p_X \colon X \to \ft X$;
  \item for each $n \in \bbN$, each $X \in \ob_{n+1}\mC$, and each $f \colon Y \to \ft X$, an object $f^*X$ together with a map $q_{f,X} \colon f^* X \to X$;
 \end{enumerate}
 subject to the following conditions:
 \begin{enumerate}
 \item $\diamond$ is the unique element of $\ob_0\mC$;
 \item $\diamond$ is terminal in $\mC$;
 \item for each $n \in \bbN$, each $X \in \ob_{n+1}\mC$, and each $f \colon Y \to \ft X$, $\ft f^*X = Y$ and the following square is a pullback:
 \begin{ctikzpicture}
   \matrix[diagram]
   {
     |(fX)| f^*X & |(X)| X \\
     |(Y)| Y   & |(ftX)| \ft X \\
   };

   \draw[->] (fX) to node[above] {$q_{f,X}$} (X);

   \draw[->] (X) to node[right] {$p_X$} (ftX);

   \draw[->] (Y) to node[above] {$f$} (ftX);
   \draw[->] (fX) to node[left] {$p_{f^*X}$} (Y);
 \end{ctikzpicture}
 \item for each $n \in \bbN$, each $X \in \ob_{n+1}\mC$, and each pair of maps $f \colon Y \to \ft X$ and $g \colon Z \to Y$, $(fg)^*X = g^*f^*X$, $1_{\ft X}^* X = X$, $q_{fg, X}= q_{f,X} q_{g, f^*X}$, and $q_{1_{\ft X}, X}= 1_X$.
 \end{enumerate}
\end{definition}

Contextual categories are easily seen to be models for an essentially algebraic theory with sorts indexed by $\bbN + \bbN \times \bbN$. This gives a natural notion of a \textbf{contextual functor}: it is a homomorphism of models of this theory.

The maps $p_X \colon X \to \ft X$ will be called \textbf{basic dependent projections}. A \textbf{dependent projection} is a map which is a composite of basic dependent projections. We will often refer to the objects of a contextual category as contexts. For notational convenience, given $\Gamma \in \ob_n \mC$, we will write $\Gamma.A$ for an object in $\ob_{n+1}\mC$ such that $\ft(\Gamma.A) = \Gamma$. A \textbf{context extension} of a context $\Gamma$ is a context $\Gamma.\Delta$ with a dependent projection $\Gamma.\Delta \to \Gamma$.

Given a contextual category $\mC$, define a contextual category $\mC^\cxt$ as follows (cf.\ \cite[Sec.\ 1.2.3 and 1.3.1]{lumsdaine:thesis}):
\begin{enumerate}
 \item the set of $\ob_n \mC^\cxt$ consists of $n$-iterated context extensions:
 \[ \Gamma_1.\Gamma_2.\ldots .\Gamma_n \]
 in $\mC$ and the morphisms $\Gamma_1.\Gamma_2.\ldots .\Gamma_n \to \Delta_1.\Delta_2.\ldots .\Delta_n$ are morphisms between them regarded as objects of $\mC$;
 \item $\diamond$ is the unique object $\ob_0 \mC^\cxt$;
 \item $\ft(\Gamma_1.\Gamma_2.\ldots .\Gamma_{n+1}) = \Gamma_1.\Gamma_2.\ldots .\Gamma_n$;
 \item the map $p_{\Gamma_1.\Gamma_2.\ldots .\Gamma_{n+1}} \colon \Gamma_1.\Gamma_2.\ldots .\Gamma_{n+1} \to \Gamma_1.\Gamma_2.\ldots .\Gamma_n$ is the dependent projection establishing $\Gamma_1.\Gamma_2.\ldots .\Gamma_{n+1}$ as a context extension;
 \item the chosen pullbacks are given by iterating the pullbacks along basic dependent projections.
\end{enumerate}

There is an obvious contextual functor $\mC \into \mC^\cxt$, which is an equivalence of underlying categories. Moreover, every object of $\mC^\cxt$ is isomorphic to one in $\ob_1 \mC^\cxt$ and every dependent projection is isomorphic to a basic one.

The notion of a contextual category corresponds to the structural rules of type theory. In order to express the logical constructors, we equip a contextual category with some additional structure. Thus for the purpose of this paper, we introduce the following definition:

\begin{definition}
 A \textbf{categorical model of type theory} is a contextual category $\mC$ equipped with $\mathsf{\Pi}$-structure \cite[Def.\ B.1.1]{kapulkin-lumsdaine-voevodsky:simplicial-model}, $\mathsf{\Sigma}$-structure \cite[Def.\ B.1.2]{kapulkin-lumsdaine-voevodsky:simplicial-model}, and $\mathsf{Id}$-structure \cite[Def.\ B.1.3]{kapulkin-lumsdaine-voevodsky:simplicial-model}, and satisfying additionally the $\mathsf{\Pi}$-$\eta$ rule and equipped with Functional Extensionality \cite[Def.\ B.3.1]{kapulkin-lumsdaine-voevodsky:simplicial-model}.
\end{definition}

\begin{remark}
 While we are focusing on the semantic part of the picture, let us observe that on the syntactic side, our choice of type theory is fairly standard. The same selection of rules for those constructors is considered in the HoTT Book \cite{hott:book}, is implemented in the proof assistant \textsc{Coq} 8.3 and later (that is, the distributions used for the UniMath \cite{UniMath} and HoTT \cite{hott:repo} repositories), and has interpretation in various homotopy-theoretic settings, including the simplicial model of Voevodsky \cite{kapulkin-lumsdaine-voevodsky:simplicial-model} and more general presheaf models \cite{shulman:inverse-diagrams}.
\end{remark}

The categorical models of type theory are again models of an essentially algebraic theory with sorts indexed by $\bbN + \bbN \times \bbN$. Let $\Cxl$ denote the category of models for this theory and we will refer to its morphisms as \textbf{homomorphisms of models}.

Let $\mC$ be a categorical model of type theory. By \cite[Sec.\ 1.3.1]{lumsdaine:thesis}, we may lift the structure of a model from $\mC$ to $\mC^\cxt$, and thus given an iterated context extension $\Gamma.\Delta.\Theta$ we may consider e.g.\ $\Gamma.\mathsf{\Pi}(\Delta, \Theta)$. We will heavily exploit this feature, giving definitions for $\mC$ using the structure of $\mC^\cxt$. \footnote{On the syntactic side, this allows us to reason as though we assumed strong $\mathsf{\Sigma}$-types in the theory.} For example, we do so when introducing the notion of a bi-invertible map:

\begin{definition}
Let $\mC$ be a categorical model of type theory. A morphism $f \colon \Gamma \to \Delta$ in $\mC$ is \textbf{bi-invertible} if there exist:
\begin{enumerate}
 \item a morphism $g_1 \colon \Delta \to \Gamma$;
 \item a section $\eta \colon \Gamma \to \langle 1_A, g_1 f \rangle^*\mathsf{Id}_\Gamma$ of the canonical projection $p_{\langle 1_\Gamma, g_1 f \rangle^*\mathsf{Id}_\Gamma} \colon \langle 1_\Gamma, g_1 f \rangle^*\mathsf{Id}_\Gamma \to \Gamma$;
 \item a morphism $g_2 \colon \Delta \to \Gamma$;
 \item a section $\varepsilon \colon \Delta \to \langle 1_\Delta, f g_1 \rangle^*\mathsf{Id}_\Delta$ of the canonical projection $p_{\langle 1_\Delta, f g_1 \rangle^*\mathsf{Id}_\Delta} \colon \langle 1_\Delta, f g_1 \rangle^*\mathsf{Id}_\Delta \to \Delta$.
\end{enumerate}
\end{definition}

In this definition, we consider $\Gamma$ and $\Delta$ as objects in $\mC^\cxt$ in order to speak of $\mathsf{Id}_\Gamma$, $\mathsf{Id}_\Delta$, and so on.

Our definition of a bi-invertible map is an externalization of the notion $\mathsf{isHIso}(f)$ of \cite[Def.\ B.3.3]{kapulkin-lumsdaine-voevodsky:simplicial-model}, which in turn is a translation to the language of contextual categories of the notion of a bi-invertible map in type theory \cite[Def.\ 4.3.1]{hott:book}.

\begin{examples}
The identity map $1_A$ is bi-invertible; the composite of two bi-invertible maps is again bi-invertible.
\end{examples}

\subsection{Abstract homotopy theory}\label{sec:aht}



We will be concerned with two axiomatic approaches to abstract homotopy theory: \emph{categories with weak equivalences} (a.k.a.\ \emph{relative categories} \cite{barwick-kan}) and \emph{quasicategories} \cite{joyal:notes-on-quasicategories},\cite{joyal:theory-of-quasi-cats}, \cite{lurie:htt} (see \cite[Introduction]{szumilo:two-models} for an excellent overview of other approaches). We will therefore review their basic theory and introduce a functor (simplicial localization) associating to a category with weak equivalences a quasicategory. Later on, we will review the relevant material from the theory of quasicategories.

A \textbf{category with weak equivalences} consists of a category $\C$ together with a wide subcategory $\w\C$ (that is, a subcategory containing all objects of $\C$), whose morphisms will be referred to as \textbf{weak equivalences}. A functor between categories with weak equivalences is \textbf{homotopical} if it takes weak equivalences to weak equivalences. The category of categories with weak equivalences and homotopical functors will be denoted by $\weCat$.

There is a natural forgetful functor $\U \colon \Cxl \to \weCat$, taking a contextual category $\mC$ to its underlying category, considered as a category with weak equivalences, in which weak equivalances are bi-invertible maps.

A \textbf{quasicategory} is a simplicial set $\C$ satisfying the inner horn lifting condition, that is, given any map $\Lambda^k[n] \to \C$ with $0 < k < n$, there exists a dashed arrow making the triangle commute:
 \begin{ctikzpicture}
   \matrix[diagram]
   {
     |(a)| \Lambda^k[n] & |(c)| \C \\
     |(b)| \Delta[n] &  \\
   };

   \draw[->] (a) to (c);
   \draw[->,inj] (a) to (b);
   \draw[->,dashed] (b) to (c);
 \end{ctikzpicture}
The full subcategory of $\sSet$ consisting of quasicategories will be denoted $\qCat$.

Barwick and Kan \cite{barwick-kan} constructed a model structure on $\weCat$ and showed that it is Quillen equivalent to the Joyal's model structure on $\sSet$, in which fibrant objects are exactly quasicategories \cite[Thm.\ 6.12]{joyal:theory-of-quasi-cats}. While there are many constructions assigning to a category with weak equivalences a quasicategory that are equivalences of homotopy theories, by result of To\"en \cite[Thm.\ 6.2]{toen:unicity}, all of them are equivalent either to the functor constructed by Barwick and Kan or its opposite (see also \cite[Rmk.\ 4.7]{kapulkin-szumilo}).

To fix the notation, we should choose one such functor; thus, let $\Ho_\infty \colon \weCat \to \sSet$ denote the functor given by the composite of Rezk's classification diagram \cite[Sec.\ 3.3]{rezk:css} followed by the right derived functor of the functor taking a bisimplicial set to its zeroth row \cite[Thm.\ 4.11]{joyal-tierney:qcat-vs-segal}. For the discussion of other possible choices (e.g.\ hammock localization followed by the derived homotopy coherent nerve) see \cite[Sec.\ 1.6]{barwick:k-theory-of-higher-cats}.

In the remainder of this section, we shall review a few notions and facts from quasicategory theory. Our choice of definitions follows \cite{joyal:theory-of-quasi-cats}; another canonical choice is \cite{lurie:htt}.

We will write $\N \colon \Cat \to \sSet$ for the nerve functor and $\tau_1 \colon \sSet \to \Cat$ for its left adjoint. Let $\qCat_2$ denote the $2$-category whose objects are quasicategories and whose hom-categories are given by $\qCat_2(\C,\D) = \tau_1(\D^\C)$. 

Let $E[1]$ denote the nerve of the contractible groupoid on two objects. Two maps $f, g \colon \C \to \D$ between quasicategories are $E[1]$-homotopic (notation: $f \we_{E[1]} g$) if there exists a map $H \colon E[1] \times \C \to \D$ (called $E[1]$-homotopy) such that $H|(\partial\Delta[1] \times \C) = [ f, g ]$ A map $f \colon \C \to \D$ is a \textbf{categorical equivalence} if there exists a map $g \colon \D \to \C$ such that $gf \we_{E[1]} 1_\C$ and $fg \we_{E[1]} 1_\D$.The following lemma gives a useful characterization of categorical equivalences:

\begin{lemma}[{\cite[Lem.\ 4.3]{kapulkin-szumilo}}]\label{lem:cat-equiv-lifting}
 Let $f \colon \C \to \D$ be a map of quasicategories. Suppose that for each $n \in \bbN$ and a square:
 \begin{ctikzpicture}
   \matrix[diagram]
   {
     |(b)| \partial \Delta[n] & |(K)| \C \\
     |(s)| \Delta[n]   & |(L)| \D \\
   };

   \draw[->,inj] (b) to (s);

   \draw[->] (K) to node[right] {$f$} (L);

   \draw[->] (b) to node[above] {$u$} (K);
   \draw[->] (s) to node[above] {$v$} (L);
 \end{ctikzpicture}
there are: a map $w \colon \Delta[n] \to \C$ such that $w|\partial \Delta[n] = u$ and an $E[1]$-homotopy from $fw$ to $v$ relative to the boundary. Then $f$ is a categorical equivalence.
\end{lemma}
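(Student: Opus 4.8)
The plan is to regard the hypothesis as an ``up to $E[1]$-homotopy rel boundary'' weakening of the right lifting property that characterises trivial fibrations, and to promote it to an honest such property after replacing $f$ by an equivalent map through the mapping path space construction. Let $P_f := \C \times_{\D} \D^{E[1]}$ be the pullback of $f \colon \C \to \D$ along $\ev_0 \colon \D^{E[1]} \to \D$: an $m$-simplex of $P_f$ is a pair consisting of an $m$-simplex $c$ of $\C$ and an $E[1]$-homotopy $h$ in $\D$ with $h|(\{0\} \times \Delta[m]) = fc$. As usual $f = q \circ s$, where $s \colon \C \to P_f$ sends $c$ to $(c, \mathrm{const}_{fc})$ and $q \colon P_f \to \D$ sends $(c, h)$ to $h|(\{1\} \times \Delta[m])$. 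Since $\{0\} \into E[1]$ is a monic categorical equivalence and $\D$ is a quasicategory, $\ev_0 \colon \D^{E[1]} \to \D$ is a trivial fibration; hence so is its pullback $P_f \to \C$, and $s$ is a section of it, so $s$ is a categorical equivalence ($P_f$ being a quasicategory, and a section of a trivial fibration admitting that fibration as an $E[1]$-homotopy inverse by a direct lifting argument). As categorical equivalences compose, it suffices to show that $q$ is a categorical equivalence, and since a map with the right lifting property against every boundary inclusion $\partial\Delta[n] \into \Delta[n]$ is a trivial fibration in Joyal's model structure --- hence, between quasicategories, a categorical equivalence --- it is enough to show that $q$ has that lifting property.

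So fix $n \in \bbN$ and a commutative square made of $a \colon \partial\Delta[n] \to P_f$ and $b \colon \Delta[n] \to \D$ with $qa = b|\partial\Delta[n]$. Writing $a = (a_1, a_2)$ with $a_1 \colon \partial\Delta[n] \to \C$ and $a_2 \colon E[1] \times \partial\Delta[n] \to \D$ an $E[1]$-homotopy from $fa_1$ to $b|\partial\Delta[n]$, a lift of the square is precisely a pair $(w, \ell)$ with $w \colon \Delta[n] \to \C$ extending $a_1$ and $\ell \colon E[1] \times \Delta[n] \to \D$ an $E[1]$-homotopy from $fw$ to $b$ restricting to $a_2$ on $E[1] \times \partial\Delta[n]$. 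I would build this in two moves. First, transport $b$ along $a_2$: the inclusion $(\{1\} \times \Delta[n]) \cup (E[1] \times \partial\Delta[n]) \into E[1] \times \Delta[n]$ is a monic categorical equivalence --- a strong deformation retract inclusion, via the evident contraction of $E[1]$ onto the vertex $1$ --- hence a trivial cofibration, so $[\,b, a_2\,]$ extends along it to a map $G \colon E[1] \times \Delta[n] \to \D$, using that $\D$ is fibrant. Then $b' := G|(\{0\} \times \Delta[n])$ satisfies $b'|\partial\Delta[n] = fa_1$, and $G$ is an $E[1]$-homotopy from $b'$ to $b$ restricting to $a_2$ on $E[1] \times \partial\Delta[n]$. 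Second, apply the hypothesis of the lemma to $u := a_1$ and $v := b'$ (legitimate, since $fa_1 = b'|\partial\Delta[n]$): it produces $w \colon \Delta[n] \to \C$ with $w|\partial\Delta[n] = a_1$ and an $E[1]$-homotopy $L$ from $fw$ to $b'$ that is constant on $\partial\Delta[n]$.

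Now $w$ extends $a_1$ as required, and it remains to manufacture $\ell$. Concatenating $L$ with $G$ yields an $E[1]$-homotopy from $fw$ to $b$; since $L$ is constant on the boundary and $G$ restricts there to $a_2$, this concatenate restricts to $a_2$ up to reparametrisation, and a final rectification --- filling against suitable trivial cofibrations assembled from the contractible intervals $E[k]$, again using fibrancy of $\D$ --- turns it into a homotopy $\ell$ that restricts to $a_2$ on the nose while still running from $fw$ to $b$. The pair $(w, \ell)$ is then the sought lift, so $q$ is a trivial fibration and the lemma follows.

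The step demanding care --- the real content beyond bookkeeping --- is exactly the passage from the homotopical lifting supplied by the hypothesis to the strict lifting required of a trivial fibration, carried out by the transport of $b$ to $b'$ and the rectification of the concatenated homotopy. Both rest on a single standard fact: for every monomorphism $A \into B$ and $\varepsilon \in \{0,1\}$, the inclusion $(\{\varepsilon\} \times B) \cup (E[1] \times A) \into E[1] \times B$ is a trivial cofibration in Joyal's model structure (together with the fibrancy of $\C$ and $\D$). This is also what guarantees, in the first step, that $\ev_0 \colon \D^{E[1]} \to \D$, and hence its pullback $P_f \to \C$, is a trivial fibration. In particular it is crucial that the interval used is $E[1]$ rather than $\Delta[1]$; the rest of the argument is formal.
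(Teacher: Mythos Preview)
The paper does not prove this lemma; it is quoted verbatim from \cite[Lem.\ 4.3]{kapulkin-szumilo} and used as a black box. So there is no in-paper argument to compare against.

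Your argument is the standard one and is correct in outline: factor $f$ through the mapping path object $P_f = \C \times_\D \D^{E[1]}$, observe that the section $s \colon \C \to P_f$ is a categorical equivalence (by 2-out-of-3, since it is a section of the trivial fibration $P_f \to \C$), and reduce to showing that $q \colon P_f \to \D$ is a trivial fibration. Unwinding what a lift against $\partial\Delta[n] \into \Delta[n]$ for $q$ means, and transporting $b$ back along $a_2$ to a $b'$ with $b'|\partial\Delta[n] = fa_1$ so that the hypothesis applies, is exactly right.

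The only point that is genuinely sketchy is the final ``concatenate $L$ with $G$ and rectify'' step. As written this appeals to composition of $E[1]$-homotopies and a reparametrisation argument that is not entirely formal. A cleaner way to finish, avoiding concatenation altogether, is a single filling in $E[1]\times E[1]\times\Delta[n]$: on $\{0\}\times E[1]\times\Delta[n]$ put $G$, on $E[1]\times\{0\}\times\Delta[n]$ put $L$ read backwards (using the involution of $E[1]$), on $E[1]\times\{1\}\times\Delta[n]$ put the constant homotopy at $b$, and on $E[1]\times E[1]\times\partial\Delta[n]$ put the map $(s,t,x)\mapsto a_2(t,x)$. These agree on overlaps, and the domain of this partial map is
\[
\big(J \times \Delta[n]\big) \cup \big(E[1]\times E[1]\times \partial\Delta[n]\big),
\qquad J := \big(\{0\}\times E[1]\big)\cup\big(E[1]\times\{0,1\}\big)\subset E[1]\times E[1].
\]
Since $J \into E[1]\times E[1]$ is a monomorphism between Joyal-contractible objects, it is a trivial cofibration; hence so is its pushout-product with $\partial\Delta[n]\into\Delta[n]$, and fibrancy of $\D$ gives an extension $H \colon E[1]\times E[1]\times\Delta[n]\to\D$. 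Then $\ell := H|(\{1\}\times E[1]\times\Delta[n])$ has $\ell|_{\{0\}}=fw$, $\ell|_{\{1\}}=b$, and $\ell|(E[1]\times\partial\Delta[n])=a_2$ on the nose. This replaces your concatenation-plus-rectification by a single lifting and makes the dependence on the key fact you isolate (pushout-products of $\{\varepsilon\}\into E[1]$ with monomorphisms are Joyal trivial cofibrations, and more generally monos between $E[1]$-contractible objects are) completely explicit.
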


The \textbf{join} of simplicial sets is a unique functor $\star \colon \sSet \times \sSet \to \sSet$ with natural transformations $K \to K \star L \leftarrow L$ such that $\Delta[m] \star \Delta[n] \iso \Delta[m+1+n]$, naturally in both $m$ and $n$, and both functors $- \star L \colon \sSet \to L \slice \sSet$ and $K \star - \colon \sSet \to K \slice \sSet$ preserve colimits.

Explicitly, the $n$-simplices of $K \star L$ are given by:
\[ (K \star L)_n = \coprod_{\overset{i,j \geq -1}{i+j = n-1}} K_i \times L_j  \]
where we assume that $K_{-1} = L_{-1} = \{* \}$, and the canonical morphism $L \to K \star L$ is taking $y \in L$ to $ (*, y) \in K \star L$. By the Adjoint Functor Theorem, $- \star L$ admits a right adjoint taking a simplicial morphism $X \colon L \to K$ to the simplicial set $K \slice X$ defined as:
 \[ (K \downarrow X)_n = \Big\{ Y \colon \Delta[n] \star L \to K ~\Big| ~ Y|L = X \Big\}. \]

 Instantiating it with the inclusion of a $0$-simplex $x \colon \Delta[0] \to \C$, we obtain the notion of a \textbf{slice quasicategory}
  \[ (\C \slice x)_n = \Big\{ X \colon \Delta[n+1] \to \C ~\Big| ~ X|\Delta^{\{n+1\}} = x \Big\}.\]
  
The canonical map $\C \slice x \to \C$ is an inner fibration and hence $\C \slice x$ is again a quasicategory. Dually, one may define coslice quasicategories $x \slice \C$. Finally, given a simplicial map $G \colon \D \to \C$ between quasicategories and an object $x \in \C$, one defines the comma quasicategory $x \slice G$ as the pullback:
 \begin{ctikzpicture}
   \matrix[diagram]
   {
     |(a)| x \slice G & |(b)| \D \\
     |(c)| x \slice \C   & |(d)| \C \\
   };
   \draw[->] (a) to (b);
   \draw[->] (a) to (c);
   \draw[->] (b) to node[right] {$G$} (d);
   \draw[->] (c) to (d);
 \end{ctikzpicture}
 
The map $x \slice G \to \D$ is again a fibration (as a pullback of a fibration) and hence $x \downarrow G$ is a quasicategory.

Next, we shall define limits in quasicategories. For notational convenience, we will write $K^\triangleleft$ for $\Delta[0] \star K$.
  
\begin{definition}\leavevmode \label{def:limit}
\begin{itemize}
 \item Let $\C$ be a quasicategory and $X \colon K \to \C$ a map of simplicial sets. A \textbf{cone} over $X$ is a simplicial map $Y \colon K^\triangleleft \to \C$ such that $Y|K = X$.
 \item A cone $\tX \colon K^\triangleleft \to \C$ is \textbf{universal} (or a \textbf{limit}) if for all $n > 0$ and all $Z \colon \partial \Delta[n] \star K \to \C$ such that $Z| \Delta^{\{n\}} \star K = \tX$, there exists an extension: 
 \begin{ctikzpicture}
   \matrix[diagram]
   {
     |(a)| \partial \Delta[n] \star K & |(c)| \C \\
     |(b)| \Delta[n] \star K &  \\
   };

   \draw[->] (a) to node[above] {$Z$} (c);
   \draw[->,inj] (a) to (b);
   \draw[->,dashed] (b) to (c);
 \end{ctikzpicture}
 \end{itemize}
\end{definition}

\begin{examples}\leavevmode
\begin{itemize}
 \item A \textbf{terminal object} in a quasicategory $\C$ is a limit of $\varnothing \to \C$.
 \item A \textbf{pullback} in a quasicategory $\C$ is a limit of a diagram $\Lambda^2[2] \to \C$.
\end{itemize}
\end{examples}

Of course, all of the above notions readily dualize to yield coslice quasicategories, colimits, etcetera.

We say that a quasicategory $\C$ has limits of shape $K$ if for every diagram $X \colon K \to \C$, there is a universal cone $\tX \colon K^\triangleleft \to \C$ over $X$. We say that $\C$ has pullbacks if it has limits of shape $K$ for $K = \Lambda^2[2]$.

Given a quasicategory $\C$ with limits of shape $K$, let $\C^{K^{\triangleleft}}_{\mathrm{univ}}$ denote the full simplicial subset of $\C^{K^\triangleleft}$ spanned by those cones over $K$ that are universal. By \cite[Prop.\ 4.3.2.15]{lurie:htt}, the restriction map $\C^{K^\triangleleft}_{\mathrm{univ}} \to \C^K$ is an acyclic fibration and thus admits a section---this defines the limit functor $\lim_K \colon \C^K \to \C^{K^\triangleleft}_{\mathrm{univ}}$.

We conclude this section with the definition and a characterization of adjoints between quasicategories.

\begin{definition}
 Let $F \colon \C \rightleftarrows \D : \! G$ be simplicial maps between quasicategories and let $\eta \colon \C \times \Delta[1] \to \C$ be such that $\eta|\C \times \partial \Delta[1] = [1_\C, GF]$ and $\varepsilon \colon \D \times \Delta[1] \to \D$ such that $\varepsilon|\D \times \partial\Delta[1] = [FG, 1_\D]$. The quadruple $(F,G,\eta,\varepsilon)$ is an \textbf{adjunction of quasicategories} (with $F$ the \textbf{left adjoint} and $G$ the \textbf{right adjoint}) if $(F,G,\eta,\varepsilon)$ is an adjunction in the $2$-category $\qCat_2$.
 \end{definition}

\begin{lemma}[{\cite[Sec.\ 17.4]{joyal:notes-on-quasicategories}}] \label{lem:adjoint-characterization}
A simplicial map $G \colon \D \to \C$ between quasicategories is a right adjoint if and only if for any $x \in \C$, the comma quasicategory $x \slice G$ has an initial object.
\end{lemma}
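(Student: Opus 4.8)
The plan is to establish the two implications separately. The forward implication is a formal consequence of the adjunction data; the substance lies in the backward implication, where the pointwise initial objects of the comma quasicategories $x\slice G$ must be assembled into a genuine simplicial functor equipped with a unit and a counit.

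For ``$\Rightarrow$'': suppose $(F,G,\eta,\varepsilon)$ is an adjunction of quasicategories and fix $x\in\C$. Unwinding the pullback defining $x\slice G$ together with the explicit formula for the (co)slice, an object of $x\slice G$ is a pair $(d,f)$ with $d\in\D$ and $f\colon x\to Gd$ a morphism of $\C$, while a morphism $(d,f)\to(d',f')$ consists of a morphism $g\colon d\to d'$ of $\D$ equipped with a $2$-simplex of $\C$ witnessing $Gg\circ f\simeq f'$. I would show that $(Fx,\eta_x)$, with $\eta_x$ the component of the unit at $x$, is an initial object: reducing initiality to the existence of extensions along the boundary inclusions $\partial\Delta[n]\into\Delta[n]$ ($n\ge1$), one builds the required fillers from naturality of $\eta$ together with the triangle identity $G\varepsilon\cdot\eta G=1$, the case $n=1$ simply producing the transpose $\varepsilon_d\circ Ff$ of a structure map $f$. (Equivalently, the adjunction identifies the mapping spaces out of $(Fx,\eta_x)$ in $x\slice G$ with the corresponding mapping spaces between $x$ and $Gd$ in $\C$, whence contractibility.) Only routine bookkeeping with the slice and comma constructions is involved here.

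For ``$\Leftarrow$'': assume that each $x\slice G$ has an initial object. I would first package the assignment $x\mapsto x\slice G$ into a single quasicategory by setting $\mathcal{E}:=\C^{\Delta[1]}\times_{\ev_1,\C,G}\D$, whose objects are triples $(x,d,f\colon x\to Gd)$, with projection $q:=\ev_0\colon\mathcal{E}\to\C$ whose fibre over $x$ is a model of $x\slice G$. Let $\mathcal{E}^{0}\subseteq\mathcal{E}$ be the full simplicial subset spanned by the objects that are initial in their fibre. The key step is to prove that $q$ restricts to an acyclic fibration $\mathcal{E}^{0}\to\C$; granting this, choose a section $s$ and set $F:=(\mathcal{E}\to\D)\circ s\colon\C\to\D$, while the composite of $s$ with $\mathcal{E}\to\C^{\Delta[1]}$ is a map $\C\times\Delta[1]\to\C$ with endpoints $1_\C$ and $GF$, that is, a unit $\eta$ each of whose components $\eta_x$ is initial in $x\slice G$. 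The counit is then forced: for $d\in\D$, applying initiality of $(F(Gd),\eta_{Gd})$ in $Gd\slice G$ to the object $(d,1_{Gd})$ yields a morphism $\varepsilon_d\colon FGd\to d$ with $G\varepsilon_d\circ\eta_{Gd}\simeq1_{Gd}$, and a second acyclic-fibration argument upgrades $\varepsilon$ to a natural transformation $FG\to1_\D$. Finally one verifies the triangle identities in $\qCat_2$ (whose $2$-cells are morphisms in $\tau_1$): one identity is built into the construction of $\varepsilon$, and for the other I would observe that $\varepsilon_{Fx}\circ F\eta_x$ underlies an endomorphism of $(Fx,\eta_x)$ in $x\slice G$ --- the needed $2$-simplex datum coming from naturality of $\eta$ and the first triangle identity --- and hence equals the identity by initiality; \Cref{lem:cat-equiv-lifting} lets one promote these pointwise identities to the required equalities of $2$-cells.

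The main obstacle is the claim that $q\colon\mathcal{E}^{0}\to\C$ is an acyclic fibration (and the analogous statement used to make $\varepsilon$ natural). On fibres this only uses that the initial objects of a quasicategory, when they exist, form a contractible Kan complex; the real content is that the lifting problems against $\partial\Delta[n]\into\Delta[n]$, $n\ge1$, can be solved compatibly over $\C$, which hinges on the fibrational behaviour of $\ev_1\colon\C^{\Delta[1]}\to\C$ and of its pullback along $G$ --- precisely the point at which the theory of (co)cartesian fibrations enters in the literature. I expect this to absorb the bulk of the work; the construction of the counit and the verification of the triangle identities afterwards are then formal.
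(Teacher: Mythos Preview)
The paper does not prove this lemma at all: it is stated as a citation from Joyal's notes \cite[Sec.\ 17.4]{joyal:notes-on-quasicategories} and used as a black box (in the proof of \Cref{thm:Nf-preserves-adjoints}, via \Cref{prop:unit-is-initial}). There is therefore no proof in the paper to compare your proposal against.

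For what it is worth, your sketch follows the standard line one finds in Joyal's notes and in Lurie's \emph{Higher Topos Theory} (around Proposition~5.2.4.2): the forward direction exhibits $(Fx,\eta_x)$ as initial in $x\slice G$, and the backward direction packages the comma quasicategories into the correspondence $\mathcal{E}=\C^{\Delta[1]}\times_\C\D$, shows that the projection restricted to fibrewise-initial objects is a trivial fibration, and reads off $F$ and $\eta$ from a section. You have correctly located the nontrivial step --- that $q\colon\mathcal{E}^0\to\C$ is an acyclic fibration --- and correctly noted that this is where the theory of (co)cartesian fibrations does the work. That step is not a routine lifting argument and really does require the relative version of ``initial objects form a contractible space''; your proposal acknowledges this but does not carry it out, so as written it is a plan rather than a proof.
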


\subsection{Statement of the main theorem}\label{sec:statement}

In this section, we state our main theorem and explain the proof strategy. To do so, we need the notion of a locally cartesian closed quasicategory and, in turn, the pullback functor. In the previous section, given a quasicategory $\C$ with pullbacks, we defined a functor $\C^{\Lambda^2[2]} \to \C^{\Delta[1] \times \Delta[1]}_{\mathrm{univ}}$ taking a cospan in $\C$ to its pullback square. However, we would also like to consider pullback as a functor between slices of quasicategories.

To this end, given a $1$-simplex $f \colon x \to y$ in a quasicategory $\C$ with pullbacks, we shall construct a functor $f^* \colon \C \slice y \to \C \slice x$, taking a $1$-simplex to its pullback along $f$. Let $\C^{(\Delta[1] \times \Delta[1])_f}_{\mathrm{univ}}$ denote the full simplicial subset of $\C^{\Delta[1] \times \Delta[1]}_{\mathrm{univ}}$ spanned by those universal cones $\Delta[1] \times \Delta[1] \to \C$ that send the edge $\Delta^{\{1\}} \times \Delta[1]$ to $f \in \C$. We therefore obtain a pullback square:
 \begin{ctikzpicture}
   \matrix[diagram]
   {
     |(a)| \C^{(\Delta[1] \times \Delta[1])_f}_{\mathrm{univ}} 
                                                                & |(b)| \C^{\Delta[1] \times \Delta[1]}_{\mathrm{univ}} \\
     |(c)| \C \slice y   & |(d)| \C^{\Lambda^2[2]} \\
   };
   \draw[->] (a) to (b);
   \draw[->>] (a) to node[left] {$\we$} (c);
   \draw[->>] (b) to node[right] {$\we$} (d);
   \draw[->] (c) to (d);
 \end{ctikzpicture}
Since the left vertical map is an acyclic fibration (as a pullback of one), it admits a section. Postcomposing this section with the projection $\C^{(\Delta[1] \times \Delta[1])_f}_{\mathrm{univ}} \to \C \slice x$, we obtain the pullback functor $f^* \colon \C \slice y \to \C \slice x$.

\begin{definition}
 A quasicategory $\C$ is \textbf{locally cartesian closed} if it has finite limits and for any $1$-simplex $f \colon x \to y$, the pullback functor $f^* \colon \C \slice y \to \C \slice x$ has a right adjoint.
\end{definition}

We are now ready to state our main theorem:

\begin{theorem} \label{thm:main-first-statement}
 For any contextual category $\mC$, the quasicategory $\Ho_{\infty}\U \mC$ is locally cartesian closed.
\end{theorem}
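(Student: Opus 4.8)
The plan is to exploit the machinery the paper advertises in its introduction: the quasicategory of frames $\N_{\mathrm{f}}$ of Szumiło, which provides an explicit model for the simplicial localization (hence for $\Ho_\infty \U$) of any category carrying the structure of a fibration category, together with the two structural theorems promised for \Cref{sec:slices,sec:adjoints} — that taking quasicategories of frames commutes (up to categorical equivalence) with slicing and with passing to adjoints. First I would recall, from \cite{avigad-kapulkin-lumsdaine} and \cite{shulman:inverse-diagrams}, that any contextual category $\mC$ with $\mathsf{\Pi}$-, $\mathsf{\Sigma}$-, $\mathsf{Id}$-structure (and $\mathsf{\Pi}$-$\eta$ plus function extensionality) carries a fibration category structure whose weak equivalences are exactly the bi-invertible maps — so that $\U\mC$ is a fibration category — and that by \cite{kapulkin-szumilo} the natural comparison $\Ho_\infty \U\mC \to \N_{\mathrm{f}}\U\mC$ is a categorical equivalence. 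It therefore suffices to prove that $\N_{\mathrm{f}}\U\mC$ is locally cartesian closed, and since local cartesian closedness is invariant under categorical equivalence, I may freely replace $\mC$ by $\mC^{\cxt}$ and work with whichever presentation is most convenient.

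The second step is finite limits: a fibration category has a terminal object and pullbacks of fibrations, and $\N_{\mathrm{f}}$ of a fibration category is known (again \cite{kapulkin-szumilo}, \cite{szumilo:two-models}) to be a quasicategory with finite limits, with the terminal object and pullbacks computed by the fibration-category structure. So the content is entirely in the adjoint clause. Fix a $1$-simplex $f \colon x \to y$ in $\N_{\mathrm{f}}\U\mC$; up to equivalence I can represent it by a fibration $A \to B$ in (the relevant fibration category associated to) $\mC$. Here the slicing theorem from \Cref{sec:slices} enters: it identifies $\N_{\mathrm{f}}\U\mC \slice y$ with $\N_{\mathrm{f}}$ of the slice fibration category $\U\mC / B$, and similarly $\N_{\mathrm{f}}\U\mC \slice x \equiv \N_{\mathrm{f}}(\U\mC / A)$, compatibly with the pullback functor $f^*$ on the quasicategorical side and pullback-along-$f$ on the fibration-category side. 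Now the crucial type-theoretic input: in a model of type theory the pullback functor $f^* \colon \U\mC / B \to \U\mC / A$ between these fibration categories has a right adjoint at the $1$-categorical level, given by the $\mathsf{\Pi}$-type construction — more precisely, on fibrations $\mathsf{\Pi}_f$ is dependent product along $f$, and the $\mathsf{\Pi}$-formation/introduction/elimination rules together with $\mathsf{\Pi}$-$\eta$ and function extensionality are exactly what guarantee that this is a homotopical right adjoint (a right Quillen-type functor, or at least an exact functor of fibration categories admitting a homotopical left adjoint up to the weak equivalences, i.e.\ bi-invertible maps). I would package this as: $f^* \dashv \mathsf{\Pi}_f$ as a "homotopy adjunction" of fibration categories, with unit and counit built from the $\mathsf{\Pi}$- and $\mathsf{Id}$-structure and shown to be bi-invertible using function extensionality.

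The final step invokes the adjoint-preservation theorem from \Cref{sec:adjoints}: applying $\N_{\mathrm{f}}$ to a homotopy adjunction between fibration categories yields an adjunction of quasicategories. Concretely, by \Cref{lem:adjoint-characterization} it is enough to check that for each object of $\N_{\mathrm{f}}(\U\mC/A)$ the comma quasicategory under $\N_{\mathrm{f}}(f^*)$ has an initial object, and this initial object is produced from the unit of the $1$-categorical adjunction $f^* \dashv \mathsf{\Pi}_f$; \Cref{lem:cat-equiv-lifting} (lifting along $\partial\Delta[n] \hookrightarrow \Delta[n]$ up to $E[1]$-homotopy) is the tool for verifying universality after transporting through the frame-quasicategory comparison. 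Assembling: $\Ho_\infty\U\mC \equiv \N_{\mathrm{f}}\U\mC$ has finite limits, and for every $f$ the functor $f^*$ on slices is, under the slicing equivalence, $\N_{\mathrm{f}}$ of the fibration-category pullback functor, which has the right adjoint $\N_{\mathrm{f}}(\mathsf{\Pi}_f)$; hence $\Ho_\infty\U\mC$ is locally cartesian closed.

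I expect the main obstacle to be the compatibility bookkeeping rather than any single hard idea: one must check that the slicing equivalence $\N_{\mathrm{f}}(\U\mC/B) \equiv \N_{\mathrm{f}}\U\mC \slice y$ actually intertwines the quasicategorical pullback functor $f^*$ (constructed in the excerpt via a section of an acyclic fibration out of $\C^{(\Delta[1]\times\Delta[1])_f}_{\mathrm{univ}}$) with the fibration-category pullback — i.e.\ that two a priori different constructions of "pullback along $f$" agree up to the relevant homotopy — and that $\mathsf{\Pi}_f$ is genuinely right adjoint to $f^*$ homotopically and not merely strictly on fibrant-cofibrant objects. The type theory does exactly this job, but only after one has carefully replaced $f$ by a fibration, restricted to the full subcategories of fibrations over $A$ and $B$, and used function extensionality to upgrade the strict triangle identities of the $\mathsf{\Pi}$-calculus to the $E[1]$-homotopies demanded by the $2$-categorical definition of an adjunction in $\qCat_2$; verifying that these homotopies cohere is where the real work lies.
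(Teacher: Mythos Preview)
Your proposal follows essentially the paper's route: reduce to $\Nf$ via \Cref{thm:Ho-Nf-coincide}, invoke the slicing theorem (\Cref{thm:slices}) and the adjoint-preservation theorem (\Cref{thm:Nf-preserves-adjoints}), and supply the $1$-categorical adjunction $p^* \dashv \mathsf{\Pi}_p$ from the $\mathsf{\Pi}$-structure; you also correctly anticipate that the delicate step is the compatibility of the quasicategorical pullback with $\Nf$ of the fibration-category pullback (the paper's \Cref{lem:comm-diagram}). The one point to correct is the role of function extensionality: in the paper the adjunction $p^* \dashv \mathsf{\Pi}_p$ is a \emph{strict} $1$-categorical adjunction (triangle identities on the nose from $\mathsf{\Pi}$-$\eta$), its unit and counit are \emph{not} bi-invertible, and function extensionality is used only to show that $\mathsf{\Pi}_p$ preserves weak equivalences and hence is exact --- the homotopy-coherent work lives entirely in the general theorem that $\Nf$ carries an adjoint pair of exact functors to a quasicategorical adjunction, not in upgrading type-theoretic identities to $E[1]$-homotopies.
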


This theorem will be proven as the last theorem in the paper (\Cref{thm:main-second-statement}). Let us also note that this gives a partial solution of the conjecture posed by Andr\'e Joyal during the \emph{Oberwolfach Mini-Workshop 1109a: Homotopy Interpretation of Constructive Type Theory} \cite[Conj.\ 1]{joyal:remarks-on-homotopical-logic} since the simplicial localization functor $\L$ appearing in his statement is equivalent to $\Ho_{\infty}$ defined above (see \cite[Ex.\ 1.6.3]{barwick:k-theory-of-higher-cats}).

The main difficulty in proving \Cref{thm:main-first-statement} lies in the fact that simplicial localization functors $\weCat \to \sSet$ involve inexplicit fibrant replacements that makes the resulting quasicategories hard to work with. Our proof strategy is based on an observation \cite[Thm.\ 3.2.5]{avigad-kapulkin-lumsdaine} that every categorical model of type theory carries the structure of a \emph{fibration category}. On the other hand, given a fibration category, one may associate to it its quasicategory of frames, defined by Szumi{\l}o \cite{szumilo:two-models}. This construction does not involve any fibrant replacements (thus allowing one to work directly with the category at hand) and, by the results of \cite{kapulkin-szumilo}, the quasicategory of frames in a fibration category is equivalent to its simplicial localization.

Our main goal is to identify additional conditions on a fibration category that guarantee that its quasicategory of frames is locally cartesian closed. This is possible essentially because the construction of the quasicategory of frames preserves adjoint pairs of exact functors and is compatible with passing to slice categories. Finally, we verify that for a categorical model of type theory $\mC$, the underlying fibration category of $\mC$ satisfies these additional conditions.

\section{Fibration categories and the quasicategory of frames} \label{sec:fib-cat}

In this section, we review the definition and basic properties of fibration categories, recall the construction of the quasicategory of frames, and gather a few important lemmas.

\begin{definition}[Brown {\cite{brown:abstract-homotopy-theory}}] \footnote{Our formulation is more restrictive than Brown's original axiomatization in that we require the class of weak equivalences to satisfy the 2-out-of-6 condition, whereas he required only 2-out-of-3. In the presence of the other axioms, F1 is then equivalent to 2-out-of-3 together with the fact that the class of weak equivalences is saturated, i.e.\ only weak equivalences are inverted in $\Ho \C$ (see \cite[Thm.\ 7.2.7]{radulescu-banu})} \label{def:fibration-category}
 A \textbf{fibration category} consists of a category $\C$ together with two wide subcategories: of \textbf{fibrations} (whose morphisms will be denoted $\fib$) and \textbf{weak equivalences} (whose morphisms we denote $\overset\we\to$) such that (in what follows, an \textbf{acyclic fibration} is a map that is both a fibration and a weak equivalence):
 \begin{enumerate}
  \item[F1.] weak equivalences satisfy the \textbf{2-out-of-6} property; that is, given a composable triple of morphisms:
  \[ X \overset{f}\longrightarrow Y \overset{g}\longrightarrow Z \overset{h}\longrightarrow Z \]
  if both $hg$ and $gf$ are weak equivalences, then $f$, $g$, and $h$ are all weak equivalences.
  \item[F2.] all isomorphisms are acyclic fibrations.
  \item[F3.] pullbacks along fibrations exist; fibrations and acyclic fibrations are stable under pullback.
  \item[F4.] $\C$ has a terminal object $1$; the canonical map $X \to 1$ is a fibration for any object $X \in \C$ (that is, all objects are \textbf{fibrant}).
  \item[F5.] every map can be factored as a weak equivalence followed by a fibration.
 \end{enumerate}
\end{definition}


\begin{examples} \leavevmode
\begin{enumerate}
 \item For a model category $\M$, its full subcategory of fibrant objects carries a fibration category structure.
 \item Let $\C$ be a fibration category and $A \in \C$. Denote by $\C \fibslice A$ be the full subcategory of the slice $\C \downarrow A$ consisting only of fibrations $B \fib A$, and declare a map in $\C \fibslice A$ to be a fibration (resp.\ a weak equivalence) if its image under the canonical functor $\C \fibslice A \to \C$ is a fibration (resp.\ a weak equivalence) in $\C$. This gives $\C \fibslice A$ the structure of a fibration category. 
\end{enumerate}
\end{examples}

The following theorem justifies (at least partially) our interest in fibration categories:

\begin{theorem}[{\cite[Thm.\ 3.2.5]{avigad-kapulkin-lumsdaine}}] \label{thm:akl}
Every categorical model of type theory $\mC$ carries the structure of a fibration category, in which fibrations are maps isomorphic to a composite of dependent projections and weak equivalences are the bi-invertible maps.
\end{theorem}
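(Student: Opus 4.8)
The plan is to verify the five axioms of \Cref{def:fibration-category} for the structure in the statement: on the underlying category of $\mC$, take the fibrations to be the maps isomorphic to dependent projections and the weak equivalences to be the bi-invertible maps. One first checks that both are wide subcategories — for the fibrations this uses that dependent projections are closed under composition and pullback; for the weak equivalences it is the Examples above. Axioms F2 and F4 are then immediate. For F4: $\diamond$ is terminal by the contextual-category axioms, and for $X\in\ob_n\mC$ the map $X\to\diamond$ is the composite of the basic dependent projections $X\to\ft X\to\cdots\to\diamond$, hence a fibration. For F2: an isomorphism is bi-invertible (take its inverse for both one-sided inverses and reflexivity terms for both sections) and it is isomorphic, as an arrow, to an identity map, which is a dependent projection; so isomorphisms are acyclic fibrations.

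For F3, pullbacks along basic dependent projections are part of the contextual structure, and $p_X$ pulls back to a basic dependent projection $p_{f^*X}$; pullbacks along composites, and along maps isomorphic to dependent projections, then follow by iteration and transport of structure, and the class of fibrations is stable under pullback. For stability of acyclic fibrations I would use the fact — available in a categorical model of type theory by Functional Extensionality, via the theory of the $\mathsf{Id}$-type weak factorization system (see \cite[App.\ B]{kapulkin-lumsdaine-voevodsky:simplicial-model}) — that a fibration is bi-invertible precisely when all of its fibers are contractible. Since contractibility of a dependent type is expressed by a $\Sigma$ of a $\Pi$ of an $\mathsf{Id}$-type, it is stable under substitution; hence a pullback of a bi-invertible fibration again has contractible fibers and is bi-invertible.

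For F5, given $f\colon\Gamma\to\Delta$ (regarded in $\mC^\cxt$), form the mapping path object $M_f$, which in the internal language is $\sum_{x:\Gamma}\sum_{y:\Delta}\mathsf{Id}_\Delta(fx,y)$. As an iterated context extension of $\Gamma$, the projection $M_f\to\Gamma$ is a dependent projection, and its fibers are based path spaces, contractible by $\mathsf{Id}$-elimination; so $M_f\fib\Gamma$ is an acyclic fibration and its section $\Gamma\to M_f$, $x\mapsto(x,fx,\mathrm{refl}_{fx})$, is bi-invertible. Reassociating the iterated $\Sigma$-type (legitimate in $\mC^\cxt$) exhibits $M_f$ as a context extension of $\Delta$, so the remaining projection $M_f\to\Delta$, $(x,y,p)\mapsto y$, is isomorphic to a dependent projection, hence a fibration; and $\Gamma\to M_f\to\Delta$ equals $f$, giving the required factorization into a weak equivalence followed by a fibration. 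For F1, I would pass to the homotopy category: the $\mathsf{Id}$-types induce a homotopy relation on parallel maps which, using the $\Pi$-$\eta$ rule and Functional Extensionality, is a congruence, yielding a quotient category $\Ho\mC$ in which a map of $\mC$ is invertible if and only if it is bi-invertible (the bi-invertibility data descends to a two-sided inverse, and conversely any representative inverse together with lifts of the witnessing homotopies assembles into bi-invertibility data). Since isomorphisms in any category satisfy 2-out-of-6 — if $hg$ and $gf$ are invertible then $g$ has both a left and a right inverse, hence $f$, $g$, and $h$ are all invertible — the bi-invertible maps of $\mC$ satisfy 2-out-of-6.

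I expect the main obstacle to be the homotopy-theoretic bookkeeping feeding into F3 and F1: constructing $\Ho\mC$ and proving the two equivalences ``bi-invertible fibration $\Leftrightarrow$ contractible fibers'' and ``bi-invertible $\Leftrightarrow$ isomorphism in $\Ho\mC$''. These are exactly the points where the $\Pi$-$\eta$ rule and Functional Extensionality enter, and establishing them amounts to externalizing the standard results on equivalences in intensional type theory; once they are in place, F1--F5 follow as above.
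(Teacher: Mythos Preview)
The paper does not give a proof of this theorem: it is imported from \cite{avigad-kapulkin-lumsdaine}, and the only additional content here is the parenthetical remark that the construction in that reference assumes every dependent projection is isomorphic to a basic one, so one first applies it to $\mC^\cxt$ (where this holds) and then transfers the fibration category structure back along the equivalence $\mC\to\mC^\cxt$. Your sketch, by contrast, is essentially an outline of what the cited paper does: a direct verification of F1--F5 using the $\mathsf{Id}$-type weak factorization system, the mapping-path-object factorization, and the characterization of bi-invertible maps as isomorphisms in the homotopy category. That outline is correct in its broad strokes, and the points you flag as the real work (bi-invertible fibration $\Leftrightarrow$ contractible fibers, and bi-invertible $\Leftrightarrow$ iso in $\Ho\mC$) are indeed where the substance lies.

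Two small remarks. First, you implicitly pass to $\mC^\cxt$ in F5 (``regarded in $\mC^\cxt$'', ``legitimate in $\mC^\cxt$'') but do not frame the whole argument as ``prove in $\mC^\cxt$, then transfer''; the paper's remark makes this explicit, and it is worth doing so, since e.g.\ forming $\mathsf{Id}_\Gamma$ or reassociating iterated $\Sigma$'s only makes literal sense in $\mC^\cxt$. Second, your claim that fibrations are closed under composition needs a word: the class is ``maps isomorphic to dependent projections'', and to compose two such one must first pull one back along an isomorphism to make the dependent projections composable---easy, but not quite the bare statement ``dependent projections are closed under composition''.
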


(The construction in \cite{avigad-kapulkin-lumsdaine} uses the assumption that every dependent projection is isomorphic to a basic one. This always holds in $\mC^\cxt$; so we may apply the construction there, and then transfer the fibration category structure back to $\mC$ along the canonical equivalence $\mC \to \mC^\cxt$.)

 A \textbf{path object} for $A$ in a fibration category $\C$ is any factorization of the diagonal map $A \to A \times A$ into a weak equivalence followed by a fibration $A \overset{\sim}{\to} PA \fib A \times A$. Let $f,g \colon A \to B$ be a pair of maps in $\C$. A \textbf{right homotopy} between $f$ and $g$ is a commutative square:
 \begin{ctikzpicture}
   \matrix[diagram]
   {
     |(a)| A' & |(b)| PB \\
     |(c)| A & |(d)| B \times B \\
   };

   \draw[->] (a) to (b);
   \draw[->>] (a) to node[left] {$\we$} (c);
   \draw[->] (b) to (d);
   \draw[->] (c) to node[above] {$\langle f , g \rangle$} (d);
 \end{ctikzpicture}
 We say that $f$ and $g$ are \textbf{right-homotopic} if there is a right homotopy between $f$ and $g$.

\begin{theorem}\leavevmode \label{thm:right-homotopy-gives-homotopy}
 \begin{enumerate}
  \item The relation of being right-homotopic is an equivalence relation on $\Hom(A,B)$ and is respected by pre- and postcomposition \cite[Thm.\ 6.3.3.(2)]{radulescu-banu}.
  \item Two morphisms $f,g \colon A \to B$ are right-homotopic if and only if they are homotopic (i.e.~equal in $\Ho\C$) \cite[Thm.\ 6.3.1.(2)]{radulescu-banu}.
 \end{enumerate}
\end{theorem}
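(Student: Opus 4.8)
The plan is to prove the two parts in turn, after recording a small package of facts about fibration categories that both rely on. First, acyclic fibrations are stable under pullback along \emph{arbitrary} maps (combine the two clauses of F3, since a pullback along an acyclic fibration exists because it is in particular a fibration); fibrations and weak equivalences, being wide subcategories, are closed under composition, so acyclic fibrations are too; and hence one has Brown's factorization lemma: every weak equivalence factors as a section of an acyclic fibration followed by an acyclic fibration (factor $\langle 1_X,w\rangle\colon X\to X\times Y$ as a weak equivalence followed by a fibration and compose with the two projections). From this one extracts the flexibility in path objects used below: every object $B$ has a \emph{good} path object $B\weq PB\overset{(d_0,d_1)}{\fib}B\times B$ in which $d_0$ and $d_1$ are themselves acyclic fibrations (factor the $d_0$ of an arbitrary path object through an acyclic fibration), and two path objects $PB,P'B$ glue to a composite path object $PB\times_B P'B$ (the pullback over $d_1$ of the first and $d_0$ of the second, which are acyclic fibrations by goodness).

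For (1), reflexivity is immediate (take $A'=A$, $w=1_A$, $H=\sigma f$, using $(d_0,d_1)\sigma=\Delta_B$), and symmetry follows by composing $(d_0,d_1)$ with the transposition of $B\times B$, which turns a path object for $B$ into another one and reverses the homotopy. For transitivity, given strict right homotopies $H_1\colon A_1\to PB$ over $w_1\colon A_1\weq A$ witnessing $f\we g$ and $H_2\colon A_2\to P'B$ over $w_2\colon A_2\weq A$ witnessing $g\we h$, the idea is to replace $PB,P'B$ by good path objects, move both homotopies over a common resolution of $A$ --- factoring $w_2$ as $A_2\weq\bar A_2\overset{q_2}{\fib}A$ and pulling $q_2$ back along $w_1$ produces $A_3=A_1\times_A\bar A_2$, an acyclic-fibration refinement of $A_1$ and hence of $A$, over which one transports both $H_i$ using the lifting supplied by the acyclic-fibration legs of the good path objects --- and then glue along $PB\times_B P'B$. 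Stability under precomposition only alters the resolution; stability under postcomposition with $k\colon B\to C$ is obtained by transporting a homotopy across $k$, again using the lifting afforded by a good path object for $C$ pulled back along $k$. This part is classical but genuinely fiddly, and I would delegate the bookkeeping to \cite[Ch.\ 6]{radulescu-banu}.

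For (2), the forward implication is short. If $H\colon A'\to PB$ over $w\colon A'\weq A$ witnesses $f\we g$, then in $\Ho\C$ the map $\sigma\colon B\weq PB$ is invertible and $d_0\sigma=d_1\sigma=1_B$ forces $d_0=d_1$ in $\Ho\C$; hence $fw=d_0 H=d_1 H=gw$ in $\Ho\C$, and cancelling the isomorphism $w$ gives $f=g$ in $\Ho\C$.

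The converse is the main obstacle, since it is really a structural statement about $\Ho\C$. The plan is to establish that $\Ho\C$ admits a homotopy calculus of left fractions: every morphism $A\to B$ of $\Ho\C$ is represented by a span $A\overset{w}{\longleftarrow}A'\overset{u}{\longrightarrow}B$ with $w$ a weak equivalence, and two spans represent the same morphism exactly when, after passing to a common refinement $A''\weq A$, the two induced maps $A''\to B$ are right-homotopic; equivalently $\Hom_{\Ho\C}(A,B)=\colim\,\Hom(A',B)/\!\we$, the colimit over the (homotopy-filtered) category $\w\C\slice A$ of weak equivalences into $A$. Part (1) is precisely what makes $\Hom(A',B)/\!\we$ a well-defined congruence compatible with composition, so this colimit makes sense and forms a category; the work --- and the step I expect to be most delicate --- is to check that the resulting category receives a functor from $\C$ inverting weak equivalences and satisfies the universal property of the localization, which requires the homotopy-filteredness of $\w\C\slice A$ (finite diagrams admit cocones up to homotopy over $A$) together with (1). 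Granting this description, $f=g$ in $\Ho\C$ says the spans $(1_A,f)$ and $(1_A,g)$ agree, so $fw\we gw$ for some $w\colon A''\weq A$; precomposing the witnessing homotopy's resolution with $w$ then exhibits $f\we g$ straight from the definition.
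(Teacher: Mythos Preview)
The paper does not prove this theorem; it merely records the statement and cites \cite{radulescu-banu} for both parts. Your sketch is essentially the standard argument one finds in that reference (and in Brown's original paper), so there is no substantive comparison to make.

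Two small comments on the write-up. First, the ``good'' path object step is redundant in a fibration category: for any path object $B\overset{\sigma}{\weq} PB\overset{(d_0,d_1)}{\fib} B\times B$, each $d_i$ is automatically an acyclic fibration, being the composite of the fibration $(d_0,d_1)$ with a projection $B\times B\fib B$ (a pullback of $B\fib 1$) and simultaneously a left inverse of the weak equivalence $\sigma$. Second, in the final line of (2) the composite $A'''\to A''\overset{w}{\to} A$ is only a weak equivalence, whereas the paper's definition of right homotopy requires an \emph{acyclic fibration} $A'\to A$; one more factorization closes this. Writing $P:=A\times_{B\times B}PB$ for the pullback of $(d_0,d_1)$ along $\langle f,g\rangle$, the homotopy and the map to $A$ assemble into a single map $A'''\to P$; factor it as $A'''\weq\bar A\fib P$, so that $\bar A\to P\fib A$ is a fibration and, by 2-out-of-3 applied to $A'''\weq\bar A\to A$, a weak equivalence, while $\bar A\to P\to PB$ supplies the required right homotopy. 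Neither point affects the overall correctness.
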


\begin{definition}\leavevmode
\begin{enumerate}
\item A functor between fibration categories is \textbf{exact} if it preserves fibrations, acyclic fibrations, pullbacks along fibrations, and the terminal object.
\item An exact functor is a \textbf{weak equivalence} of fibration categories if it induces an equivalence of homotopy categories.
\end{enumerate}
\end{definition}

\begin{examples}\leavevmode
\begin{enumerate}
 \item Every right Quillen functor induces an exact functor between subcategories of fibrant objects.
 \item Suppose an exact functor $F \colon \C \to \D$ is a homotopy equivalence, i.e.\ there exists a homotopical functor $G \colon \D \to \C$ (not necessarily exact) together with two natural weak equivalences $GF \overset\we\to 1_\C$ and $1_\D \overset\we\to FG$. Then $F$ is a weak equivalence of fibration categories.
 \item Let $F \colon \mC \to \mathsf{D}$ be a homomorphism of categorical models of type theory. Then $F$ is an exact functor between the fibration categories of \Cref{thm:akl} associated to $\mC$ and $\mathsf{D}$. Indeed, $F$ has to preserve dependent projections (and hence fibrations), distinguished pullbacks (and hence pullbacks along fibrations), and the terminal object $\diamond$; moreover, one easily sees that $F$ preserves acyclic fibrations as well.
 \item Given a morphism $f \colon A \to B$ in a fibration category $\C$, the pullback functor $f^* \colon \C \fibslice B \to \C \fibslice A$ is an exact functor.
 \item Let $f \colon A \to B$ be a fibration in $\C$. The functor $f_! \colon \C \fibslice A \to \C \fibslice B$ given by $f_!p = fp$ is exact if and only if $f$ is an isomorphism (otherwise, $f_!$ does not preserve the terminal object).
\end{enumerate} 
\end{examples}

\begin{proposition}\label{prop:pb-along-we}
 Let $\C$ be a fibration category and $f \colon A \to B$ an acyclic fibration. Then the pullback functor $f^* \colon  \C \fibslice B \to \C \fibslice A$ is a weak equivalence.
\end{proposition}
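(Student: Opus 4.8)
The plan is to exhibit an explicit homotopy inverse to $f^*$ and invoke the characterization of weak equivalences of fibration categories as exact homotopy equivalences (second item of the Examples after the definition of exact functor). Since $f \colon A \to B$ is an acyclic fibration, postcomposition with $f$ gives a functor $f_! \colon \C \fibslice A \to \C \fibslice B$, $f_! p = f p$; although $f_!$ need not be exact (it fails to preserve the terminal object unless $f$ is iso), it is enough that it be \emph{homotopical}, which it is, since weak equivalences in the slices are created by the forgetful functors to $\C$ and $f_!$ does not change the underlying map. So it remains to produce natural weak equivalences $f^* f_! \weq 1_{\C \fibslice A}$ and $1_{\C \fibslice B} \weq f_! f^*$.

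First I would treat $f_! f^*$. Given $(q \colon Y \fib B) \in \C \fibslice B$, the object $f_! f^* q$ is the composite $f \circ (f^* q) \colon f^*Y \to A \to B$, and the canonical map $q_{f,q} \colon f^*Y \to Y$ over $B$ (the top edge of the defining pullback square) is a pullback of the acyclic fibration $f$ along $q$, hence by F3 it is again an acyclic fibration; this assembles into a natural weak equivalence $f_! f^* \weq 1$. (Note $f^*Y \fib B$ as the composite of two fibrations, so $f_! f^* q$ genuinely lands in $\C \fibslice B$; and naturality in $q$ is the standard functoriality of chosen pullbacks.) Second, for $f^* f_!$: given $(p \colon X \fib A) \in \C \fibslice A$, the object $f^* f_! p$ is the pullback of $f p \colon X \to B$ along $f \colon A \fib B$. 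There is a canonical comparison map $X \to f^*(fp)$ over $A$ induced by $1_X$ and $p$ (using the universal property of the pullback, since $f \circ p = (fp) \circ 1_X$), and I claim it is a weak equivalence. The cleanest way to see this: in the pullback square defining $f^*(fp)$, the projection $f^*(fp) \to X$ is a pullback of the acyclic fibration $f$ along $fp$, hence acyclic; the comparison map $X \to f^*(fp)$ is a section of it, so by 2-out-of-3 (which follows from F1) it is a weak equivalence. Naturality is again routine from functoriality of pullbacks.

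The one genuine subtlety—and the step I expect to demand the most care—is matching the chosen-pullback bookkeeping so that these two families of maps are strictly natural transformations rather than merely pointwise weak equivalences, since fibration categories only come with \emph{some} choice of pullbacks along fibrations and the slice functors $f^*$, $f_!$ are defined using those choices. Concretely one must check that the comparison map $X \to f^*(fp)$ and the map $q_{f, q} \colon f^*Y \to Y$ are compatible with the functoriality maps $q_{-,-}$ built into $f^*$, i.e. that the relevant squares of chosen pullbacks commute; this is a diagram chase using the pasting law for pullbacks and the axioms $(gh)^* = h^* g^*$-type coherences. Once naturality is in hand, F1 (via 2-out-of-3) and F3 do all the remaining work, and the conclusion follows from the cited criterion. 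Dually one could phrase the whole argument by noting $f^*$ and $f_!$ form an adjoint equivalence up to homotopy on homotopy categories, but the exact-homotopy-equivalence route is the most direct given the tools already assembled in the excerpt.
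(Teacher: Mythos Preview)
Your proposal is correct and follows the same route as the paper: the paper's one-line proof simply observes that $f^*$ is a homotopy equivalence with homotopy inverse $f_!$, since the unit and counit of the adjunction $f_! \dashv f^*$ are natural weak equivalences. Your concern about strict naturality is unnecessary---once a choice of pullbacks is fixed so that $f^*$ is an honest functor, $f_!$ is a genuine (1-categorical) left adjoint to $f^*$, and the comparison maps you construct are exactly the unit and counit of that adjunction, hence automatically natural.
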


\begin{proof}
 It follows from the fact that $f^*$ is a homotopy equivalence, with the homotopy inverse given by $f_!$ since the unit and counit of the adjunction $f_! \adjoint f^*$ are natural weak equivalences.
\end{proof}

\begin{definition}\label{def:direct-inverse-cat}\leavevmode
 \begin{enumerate}
  \item A category $J$ is \textbf{inverse} if there is a function $\deg \colon \ob J \to \bbN$ such that for every non-identity map $j \to j'$ in $J$ we have $\deg j > \deg j'$.
 \end{enumerate}
Let $J$ be an inverse category.
\begin{enumerate}
 \item[(2)] Let $j \in J$. The \textbf{matching category} $\partial(j \slice J)$ of $j$ is the full subcategory of the slice category $j \slice J$ consisting of all objects except $1_j$. There is a canonical functor $\cod \colon \partial(j\slice J) \to J$, assigning to a morphism (regarded as object of $\partial(j \slice J)$) its codomain.
 \item[(3)] Let $X \colon J \to \C$ and $j \in J$. The \textbf{matching object} of $X$ at $j$ is defined as a limit of the composite
   \[ M_j X := \lim (\partial (j \slice J) \longrightarrow J \overset{X}{\longrightarrow} \C),\]
   where the first map sends an arrow in the slice to its codomain and the second is $X$. The canonical map $X_j \to M_j X$ is called the \textbf{matching map}.
 \item[(4)] Let $\C$ be a fibration category. A diagram $X \colon J \to \C$ is called \textbf{Reedy fibrant} if for all $j \in J$, the matching object $M_j X$ exists and the matching map $X_j \to M_j X$ is a fibration.
\end{enumerate}
\end{definition}
 
Recall that a \textbf{homotopical category} is a category with weak equivalences $\C$ satisfying the 2-out-of-6 property. The full subcategory of $\weCat$ consisting of homotopical categories will be denoted $\hoCat$. From this point on, all categories that we will consider will be homotopical (sometimes with trivial homotopical structure given by isomorphisms only, e.g.\ $[n]$) and all functors are assumed to be homotopical.

Given a homotopical category $J$, we will construct a homotopical category $DJ$ such that $DJ^\op$ is inverse, together with a homotopical functor $\ev_0 \colon DJ^\op \to J$. The objects of $DJ$ are pairs $([n], \varphi \colon [n] \to J)$, where $n \in \bbN$ and $\varphi$ is an arbitrary functor. A map
  \[ f \colon ([n], \varphi) \to ([m], \psi)  \]
 is an injective, order preserving map $f \colon [n] \into [m]$ such that $\psi f = \varphi$. For notational convenience later on, we will write $i^k$ for the constant function $[k] \to [n]$ taking value $i$.
 
 Clearly, $DJ^\op$ is an inverse category (with $\deg ([n], \varphi) = n$). Putting $\ev_0([n],\varphi)=\varphi(0)$ gives a contravariant functor and the homotopical structure on $DJ$ is created by this functor.

Let us also mention some variations on this construction. Given a homotopical poset $P$, we define a homotopical category $\Sd P$ (the \textbf{barycentric subdivision} of $P$) as the full subcategory of $DP$ consisting of functors $[k] \to P$ that are injective on objects. Often when describing objects of $\Sd P$, we will identify an injective map $\varphi \colon [k] \to P$ with its image $\mathrm{im}\varphi \subseteq P$.

We may also define $DK$ for a simplicial set $K$ by the left Kan extension of the functor $\Delta \into \hoCat \overset{D}\longrightarrow \hoCat$ defined above along the Yoneda embedding. Explicitly, the underlying category of $DK$ is the full subcategory of the category of elements of $K \colon \Delta^\op \to \Set$ spanned by the face operators and the set of weak equivalences is the smallest set closed under 2-out-of-6 and containing the morphisms induced by the degenerate $1$-simplices of $K$.

Following \cite[Def.\ 3.17]{szumilo:two-models}, a \textbf{marked simplicial complex} is a simplicial set $K$ with a chosen inclusion $K \into \N P$, where $P$ is a homotopical poset. Given a marked simplicial complex $K$ define $\Sd K$ as the full homotopical subcategory of $DK$ whose objects are nondegenerate simplices of $K$.

Finally, we will also define an \emph{augmented} version of $D$, denoted by $D_a$. For a homotopical category $J$, define the homotopical category $D_a J$ as follows: the underlying category of $D_a J$ is obtained by freely adjoining an initial object $\varnothing$ to $DJ$ and the weak equivalences are those of $DJ$ (none of the newly added maps is a weak equivalence).

  Let $\C$ be a fibration category. Define a simplicial set $\Nf\C$, called the \textbf{quasicategory of frames} of $\C$, by:
 \[ (\Nf\C)_m := \left\{\text{homotopical, Reedy fibrant diagrams } D[m]^\op \to \C \right\}. \]
 The assignment $\C \mapsto \Nf\C$ extends to a functor on the category of fibration categories and exact functors. In fact, $\Nf\C$ can be characterized as a representing object for a certain functor $\sSet \to \Set$:

\begin{lemma}[{\cite[Prop.~3.7]{szumilo:two-models}}] \label{thm:fake-adjunction}
 Let $\C$ be a fibration category and $K$ a simplicial set. Then there is a natural bijection between simplicial maps $K \to \Nf\C$ and homotopical, Reedy fibrant functors $DK^\op \to \C$. In other words, the functor taking a simplicial set $K$ to the set of homotopical, Reedy fibrant diagrams $DK^\op \to \C$ is representable, represented by $\Nf\C$.
\end{lemma}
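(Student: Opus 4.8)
The plan is to establish the natural bijection by exhibiting both sides as sets of compatible families of data indexed by the simplices of $K$, and then invoking the fact that $DK$ was defined precisely by a left Kan extension so as to make this work. First I would unwind the definition of $(\Nf\C)_m$: a simplicial map $K \to \Nf\C$ is, by Yoneda, a natural transformation from $K$ (viewed as a presheaf on $\Delta$) to the presheaf $m \mapsto (\Nf\C)_m$, hence a compatible family of homotopical Reedy fibrant diagrams $D[m]^\op \to \C$ indexed by the simplices $\sigma \colon \Delta[m] \to K$, with compatibility expressed along face and degeneracy maps via the functoriality of $[m] \mapsto D[m]$. This is exactly the data of a single functor out of the colimit $\colim_{\Delta[m] \to K} D[m]^\op$, which by the defining left Kan extension is $DK^\op$; so the underlying functors match up. The content of the lemma is then that the side conditions — being homotopical and being Reedy fibrant — also match up under this identification.

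The key steps, in order, would be: (1) identify simplicial maps $K \to \Nf\C$ with functors $DK^\op \to \C$ using the colimit description of $DK$ and the representability bookkeeping above; (2) check that such a functor is \emph{homotopical} (sends weak equivalences of $DK^\op$ to weak equivalences in $\C$) if and only if each of its restrictions along the canonical maps $D[m]^\op \to DK^\op$ is homotopical — this uses that the weak equivalences of $DK$ are \emph{generated} (under $2$-out-of-$6$) by the images of the degenerate $1$-simplices of $K$, which are exactly the images of the nonidentity weak equivalences of the various $D[1]^\op$, so a functor inverting all of those automatically inverts everything in $wDK^\op$; (3) check the analogous statement for \emph{Reedy fibrancy}, i.e.\ that $F \colon DK^\op \to \C$ is Reedy fibrant iff each restriction $D[m]^\op \to \C$ is. For (3) the point is that every object of $DK$ is, by construction, in the image of some $D[m] \to DK$ (it corresponds to a simplex of $K$ together with a functor $[n] \to [m]$ realizing it as a face), and the matching category $\partial(x \slice (DK)^\op)$ at such an object is computed the same way whether one works in $DK^\op$ or in the $D[m]^\op$ through which $x$ factors — because the maps in $DK$ out of (the dual of) $x$ are the injective order-preserving maps respecting the labelling, and these don't depend on the ambient simplicial set. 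Hence the matching object and the matching map at $x$ agree in both, so the fibrancy condition at $x$ is the same condition. Finally (4), observe that the bijection so obtained is natural in $K$: both constructions are contravariant in $K$ (precomposition with $DK' \to DK$ on one side, precomposition of simplicial maps on the other), and the identification in step (1) was built from the universal property of the Kan extension, which is natural.

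The main obstacle I expect is step (3): making precise that "the matching category at $x$ is the same whether computed in $DK^\op$ or in $D[m]^\op$" requires a careful look at what maps exist in $DK$ into a given object — in particular one must rule out spurious morphisms arising from non-injective simplices or from relations in $K$, and confirm that the comma category $\partial(x \slice (DK)^\op)$ has the expected shape (it should be, up to isomorphism, the matching category of the corresponding object of some $D[n]^\op$, independent of which face realization one picks). Once this local computation is pinned down, the rest is formal: (1) is representability unwinding, (2) is the generation statement for weak equivalences in $DK$ which was essentially put into the definition, and (4) is naturality of a Kan extension. I would also remark that this lemma is quoted from Szumi\l{}o \cite[Prop.~3.7]{szumilo:two-models}, so in the paper itself one need only cite it; the sketch above is how one would reconstruct the argument.
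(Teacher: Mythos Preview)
The paper does not prove this lemma at all: it is stated with a citation to \cite[Prop.~3.7]{szumilo:two-models} and no proof is given. You correctly identify this at the end of your proposal, so as far as comparison with the paper goes there is nothing to compare --- your final remark (``in the paper itself one need only cite it'') is exactly what the paper does. Your reconstruction sketch is along the right lines: the argument in Szumi{\l}o's paper is indeed the representability unwinding via the colimit description of $DK$, with the substantive content being that the homotopical and Reedy-fibrancy conditions are local over simplices; your identification of step~(3) as the place requiring care is accurate.
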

 
 The following two theorems indicate the importance of this construction for our purposes:

 \begin{theorem}[Szumi{\l}o] \label{thm:szumilo-main} \leavevmode
 \begin{enumerate}
  \item The functor $\Nf$ takes values in the category of finitely complete quasicategories and finite limit preserving functors \cite[Thm.~3.3]{szumilo:two-models}.
  \item Moreover, it takes weak equivalences of fibration categories to categorical equivalences of quasicategories \cite[Thm.~3.26]{szumilo:two-models}.
 \end{enumerate}
\end{theorem}

\begin{theorem}[{\cite[Cor.\ 4.6]{kapulkin-szumilo}}]\label{thm:Ho-Nf-coincide}
 For any fibration category $\C$, the quasicategories $\Ho_\infty\C$ and $\Nf\C$ are weakly equivalent in Joyal's model structure.
\end{theorem}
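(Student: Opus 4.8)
This is \cite[Cor.~4.6]{kapulkin-szumilo}; I sketch how one would prove it. Since $\Nf\C$ is a quasicategory by \Cref{thm:szumilo-main} and $\Ho_\infty\C$ is one by construction, ``weakly equivalent in Joyal's model structure'' just means ``categorically equivalent'', so it is enough to produce such an equivalence. The first move is to dispose of the inexplicit fibrant replacement hidden inside $\Ho_\infty$: by the rigidity results recalled above, $\Ho_\infty\C$ is categorically equivalent to any other model of the simplicial localization of $(\C,\w\C)$, so I would replace it by the concrete model $\L\C := \N\widehat{\LH\C}$, the homotopy coherent nerve of a levelwise Kan replacement of the Dwyer--Kan hammock localization $\LH\C$. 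The plan is then to build a natural comparison map $\Phi_\C \colon \Nf\C \to \L\C$ and to prove it is a categorical equivalence using \Cref{lem:cat-equiv-lifting}.

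To construct $\Phi_\C$, recall that an $m$-simplex of $\Nf\C$ is a homotopical, Reedy fibrant diagram $X \colon D[m]^\op \to \C$, and that such a diagram carries, for each $0 \le i \le j \le m$, the subdiagram indexed by the functors $[k] \to [m]$ with image $\{i,\dots,j\}$ --- a model for the mapping space in $\LH\C$ from the $\C$-object indexed by the vertex $i$ to the one indexed by $j$. Using the combinatorial relationship between $D[m]$ and the rigidification $\fC[\Delta[m]]$ of $\Delta[m]$, together with $\ev_0 \colon D[m]^\op \to [m]$, these subdiagrams assemble (after a standard natural zigzag of categorical equivalences if needed) into a homotopy coherent $m$-simplex of $\widehat{\LH\C}$, the Reedy fibrancy of $X$ ensuring that the relevant objects have the correct homotopy type; this is essentially the content of the representability in \Cref{thm:fake-adjunction}, and functoriality of $D(-)$ makes the assignment simplicial and natural in exact functors.

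For the verification via \Cref{lem:cat-equiv-lifting}, I would fix $n$ and a commuting square with $u \colon \partial\Delta[n] \to \Nf\C$ and $v \colon \Delta[n] \to \L\C$, and unwind it: $u$ corresponds to a homotopical Reedy fibrant diagram $X_\partial \colon D(\partial\Delta[n])^\op \to \C$, and $v$ to a homotopy coherent $n$-simplex of $\widehat{\LH\C}$ whose boundary is $\Phi_\C u$. Since the inclusion $D(\partial\Delta[n])^\op \into D[n]^\op$ keeps all objects $(k,\varphi)$ with $\varphi$ non-surjective and adjoins exactly those with $\varphi$ surjective, the task becomes: extend $X_\partial$ to a homotopical Reedy fibrant diagram $X \colon D[n]^\op \to \C$ realizing $v$, and then exhibit an $E[1]$-homotopy rel $\partial\Delta[n]$ from $\Phi_\C X$ to $v$. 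I would perform the extension by induction on $\deg(k,\varphi) = k$ along the inverse category $D[n]^\op$: when a new object $j$ is reached, its matching category sits inside the part already built, so the matching object $M_j X$ and the cone over it are forced; the coherent datum $v$ prescribes the right-homotopy class (via \Cref{thm:right-homotopy-gives-homotopy}) of the matching map $X_j \to M_j X$, which one realizes by factoring a representative as a weak equivalence followed by a fibration (axiom F5) and invoking stability of fibrations and acyclic fibrations under pullback (F3), then checking homotopicality of the enlarged diagram with 2-out-of-6 (F1). The $E[1]$-homotopy rel $\partial\Delta[n]$ would be produced by the same mechanism applied to a frame over $\Delta[n] \times E[1]$ valued in a suitable path object interpolating between $\Phi_\C X$ and $v$.

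The hard part will be precisely this inductive extension: making rigorous in what sense $v$ ``prescribes'' the homotopy class of each newly added matching map, and showing that mutually compatible admissible choices can always be made so that the resulting diagram $D[n]^\op \to \C$ is genuinely homotopical and Reedy fibrant. This is a delicate but purely fibration-category-theoretic argument resting on the axioms of \Cref{def:fibration-category}, and it is of a piece with Szumi{\l}o's proof of \Cref{thm:szumilo-main} --- indeed with the proof that $\Nf\C$ is a quasicategory at all --- so it, rather than the formal reductions above, carries the real content of the theorem. A lesser point is checking that $\L\C$ really is categorically equivalent to $\Ho_\infty\C$, which is exactly the rigidity statement recalled above.
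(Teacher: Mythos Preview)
The paper does not contain a proof of this theorem at all: it is stated as a citation to \cite[Cor.~4.6]{kapulkin-szumilo} and used as a black box. There is therefore nothing in the present paper to compare your proposal against; your sketch is an outline of how one might prove the cited external result, not a reconstruction of anything appearing here.

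That said, a brief comment on your outline. The overall shape---replace $\Ho_\infty$ by another model of simplicial localization using To\"en rigidity, then build a direct comparison map out of $\Nf\C$ and verify it is an equivalence via the lifting criterion of \Cref{lem:cat-equiv-lifting}---is reasonable in spirit, and indeed \cite{kapulkin-szumilo} does argue via that lifting criterion. However, your description of the comparison map $\Phi_\C$ is where the real difficulty lies and your account is too vague to count as a plan: saying that subdiagrams of a frame ``assemble (after a standard natural zigzag of categorical equivalences if needed) into a homotopy coherent $m$-simplex of $\widehat{\LH\C}$'' via ``the combinatorial relationship between $D[m]$ and $\fC[\Delta[m]]$'' is precisely the step that requires work, and it is not clear from what you write that a strictly simplicial map (as opposed to a zigzag) can be produced this way. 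In the actual argument of \cite{kapulkin-szumilo} the comparison is not made against the hammock localization but more directly against the model $\Ho_\infty$ defined via Rezk's classification diagram, which avoids having to mediate through $\fC[\Delta[m]]$; the inductive extension step you describe is closer in flavor to the arguments there, but the target is different. So your sketch identifies the right kind of argument but misroutes it through a harder intermediate model.
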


It thus follows formally from the previous two theorems together with \Cref{thm:akl} that for every contextual category $\mC$, the quasicategory $\Ho_\infty\U\mC$ has finite limits. We still need to show, of course, that $\Ho_\infty\U\mC$ has right adjoints to pullback functors. However, there is a very convenient characterization of pullbacks in the quasicategories $\Nf\C$:

\begin{example}[{\cite[Ex.\ 3.34]{szumilo:two-models}}] \label{ex:pullback-in-NfC}\leavevmode
A square $P \colon D([1] \times [1])^\op \to \C$ in $\Nf\C$ is a homotopy pullback if in its restriction to $\Sd([1] \times [1])^\op$
\begin{center}
 \begin{ctikzpicture}
    \matrix[diagram,column sep=10em,row sep=10em]
    {
      |(a-a)| P_{0,0} & |(b-a)| P_{1,0} \\
      |(a-b)| P_{0,1} & |(b-b)| P_{1,1} \\
    };

    \node (aa-ab) at (barycentric cs:a-a=1,a-b=1,b-b=0) {$P_{00,01}$};
    \node (ab-bb) at (barycentric cs:a-a=0,a-b=1,b-b=1) {$P_{01,11}$};
    \node (ab-ab) at (barycentric cs:a-a=1,a-b=0,b-b=1) {$P_{01,01}$};

    \node (ab-aa) at (barycentric cs:a-a=1,b-a=1,b-b=0) {$P_{01,00}$};
    \node (bb-ab) at (barycentric cs:a-a=0,b-a=1,b-b=1) {$P_{11,01}$};

    \node (aab-abb) at (barycentric cs:a-a=2,a-b=3,b-b=2) {$P_{001,011}$};
    \node (abb-aab) at (barycentric cs:a-a=2,b-a=3,b-b=2) {$P_{011,001}$};

    \draw[->]  (aa-ab) to node[left] {$\we$} (a-a);
    \draw[->] (aa-ab) to (a-b);

    \draw[->] (ab-bb) to node[below] {$\we$}  (a-b);
    \draw[->] (ab-bb) to (b-b);

    \draw[->] (ab-aa) to node[above] {$\we$} (a-a);
    \draw[->] (ab-aa) to (b-a);

    \draw[->] (bb-ab) to node[right] {$\we$} (b-a);
    \draw[->] (bb-ab) to (b-b);

    \draw[->] (ab-ab) to node[above right] {$\we$} (a-a);
    \draw[->,dashed] (ab-ab) to (b-b);

    \draw[->] (aab-abb) to node[above right] {$\we$} (aa-ab);
    \draw[->,dashed] (aab-abb) to (ab-bb);
    \draw[->] (aab-abb) to node[below right]  {$\we$} (ab-ab);

    \draw[->] (abb-aab) to node[below left] {$\we$} (ab-aa);
    \draw[->,dashed] (abb-aab) to  (bb-ab);
    \draw[->] (abb-aab) to node[below right] {$\we$} (ab-ab);
  \end{ctikzpicture}
 \end{center} 
 the canonical map $P_{001,011} \times_{P_{01,01}} P_{011,001} \to P_{01,11} \times_{P_{1,1}} P_{11,01}$ induced by the dashed arrows is a weak equivalence.
\end{example}

The next three lemmas from \cite{szumilo:two-models} will be used throughout the paper:

\begin{lemma}[{\cite[Lem.~3.19 and Lem.\ 1.24 (1)$\Rightarrow$(2)]{szumilo:two-models}}]\label{thm:Sd-D-acyclic}
 Let $K \into L $ be an inclusion of finite marked simplicial complexes and let $\C$ be a fibration category. Given a Reedy fibrant diagram $X \colon (DK \cup \Sd L)^\op \to \C$, there exists a Reedy fibrant diagram $\tX \colon DL^\op \to \C$ such that $\tX|(DK \cup \Sd L)^\op = X$.
\end{lemma}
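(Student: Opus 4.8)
The plan is to derive this from Szumi{\l}o's abstract extension principle for Reedy fibrant diagrams along suitable inclusions of inverse categories, together with a combinatorial verification that $(DK \cup \Sd L)^\op \hookrightarrow DL^\op$ is such an inclusion. Since $L$ is a finite marked simplicial complex, $DL$ is finite and $DL^\op$ is a finite inverse category with $\deg(n,\varphi) = n$; and $(DK \cup \Sd L)^\op$ consists of those simplices of $L$ that lie in $K$ or are nondegenerate, so the objects of $DL^\op$ outside it are exactly the \emph{degenerate} simplices of $L$ not contained in $K$. I would construct $\tX$ by induction on $\deg$, extending $X$ over these missing objects one degree at a time; the marked structure of $L$ is what pins down the homotopical structure of all the categories in play.

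For the inductive step, assume $\tX$ has been built, homotopical and Reedy fibrant, on the full subcategory of $DL^\op$ given by $(DK \cup \Sd L)^\op$ together with all missing objects of degree $< n$, and let $x$ be a missing object of degree $n$, i.e.\ a degenerate $n$-simplex of $L$ outside $K$. The key combinatorial point --- the content of \cite[Lem.\ 3.19]{szumilo:two-models} --- is that every object of the matching category $\partial(x \slice DL^\op)$ is a proper face of $x$, hence of degree strictly below $n$, so $\tX$ is already defined on $\partial(x \slice DL^\op)$; by finiteness this matching diagram has a limit $M_x\tX$ in $\C$, built as an iterated pullback along fibrations using F3 and F4. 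One then produces $\tX_x$ together with its matching map $\tX_x \fib M_x\tX$ by factoring, via F5, a suitable map into $M_x\tX$ --- namely one issuing from the value of $\tX$ at the nondegenerate simplex underlying $x$, which already carries a weak equivalence to $x$ in $DL^\op$ --- as a weak equivalence followed by a fibration. The fibration half makes the extension Reedy fibrant at $x$, and the weak-equivalence half, combined with F1 (2-out-of-6) and 2-out-of-3, is what keeps it homotopical. As there are no non-identity maps between objects of equal degree in $DL^\op$, all missing degree-$n$ objects may be treated independently, and iterating up to the finite dimension of $L$ yields $\tX \colon DL^\op \to \C$ with $\tX|(DK \cup \Sd L)^\op = X$.

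The hard part is exactly the interplay of the two requirements. Reedy fibrancy wants the new value to fibre over the matching object, but the naive choice $\tX_x := M_x\tX$ destroys homotopicality: for a degenerate $1$-simplex $\sigma = s_0^*\tau$ the matching object is $\tX_\tau \times \tX_\tau$, whereas the two generating weak equivalences $\tau \to \sigma$ must be sent to maps $\tX_\sigma \to \tX_\tau$ that are weak equivalences, which the projections off a product are not. Arranging the F5-factorizations coherently over all missing objects so that both conditions hold simultaneously is precisely what \cite[Lem.\ 1.24 (1)$\Rightarrow$(2)]{szumilo:two-models} accomplishes, its hypothesis (1) being verified here by the face-dimension analysis above; finiteness of $L$ is used to guarantee both that the induction terminates and that every matching limit exists in $\C$.
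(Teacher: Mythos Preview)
The paper does not prove this lemma; it records the statement and cites Szumi{\l}o. So there is no ``paper's own proof'' to compare against beyond the bare attribution, and your sketch is really an attempt to narrate what the cited references accomplish.

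Your outline has the right architecture --- Reedy induction over the finite inverse category $DL^\op$, matching objects via F3--F4, new values via F5, homotopicality via F1 --- and you correctly observe that $(DK \cup \Sd L)^\op$ is a cosieve in $DL^\op$ (faces of simplices of $K$ stay in $K$; in a marked simplicial complex, faces of nondegenerate simplices are nondegenerate). Two points, however, are imprecise. First, you identify the content of \cite[Lem.~3.19]{szumilo:two-models} as ``every object of $\partial(x\slice DL^\op)$ is a proper face of $x$''; but that is the \emph{definition} of the matching category and holds in any inverse category, so it cannot be what that lemma says. Second, you assert there is ``a suitable map into $M_x\tX$ \dots issuing from $\tX_{\bar x}$'', yet no such map comes for free. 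In your own degree-$1$ example the needed map is the diagonal $\tX_\tau \to \tX_\tau \times \tX_\tau$, which is fine; but already for $x=(p,p,q)$ with $\bar x=(p,q)$, a map $\tX_{(p,q)} \to M_x\tX$ must in particular supply a map $\tX_{(p,q)} \to \tX_{(p,p)}$ compatible with both projections to $\tX_p$, and that exists only thanks to the specific ``reflexivity'' weak equivalence $\tX_p \overset{\sim}{\to} \tX_{(p,p)}$ chosen at the previous stage. Your final paragraph concedes that arranging these choices coherently is exactly what \cite[Lem.~1.24]{szumilo:two-models} handles --- which is correct --- but then your middle paragraph's concrete description of the inductive step overstates what has been established, and the phrase ``its hypothesis (1) being verified here by the face-dimension analysis above'' misattributes to dimension-counting what is in fact a sharper combinatorial statement about the inclusion $DK \cup \Sd L \hookrightarrow DL$.
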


(By an inclusion of marked simplicial complexes we understand an injective map that reflects equivalences.)

\begin{lemma}[{\cite[Lem.\ 4.6]{szumilo:two-models}}] \label{lem:rel-homot}
Let $\C$ be a fibration category and $K \into L$ and inclusion of marked simplicial complexes.
Moreover, let $X, Y \colon DL^\op \to \C$ be homotopical, Reedy fibrant diagrams and $f \colon X| (\Sd L)^\op \to Y| (\Sd L)^\op$ a natural weak equivalence such that $f|(\Sd K)^\op$ is the identity transformation.
Then $X$ and $Y$ are $E[1]$-homotopic relative to $K$ as diagrams in $\Nf \C$.
\end{lemma}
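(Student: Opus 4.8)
The plan is to realise the $E[1]$-homotopy through the representability of $\Nf\C$ (\Cref{thm:fake-adjunction}): an $E[1]$-homotopy $E[1]\times L\to\Nf\C$ is the same datum as a homotopical, Reedy fibrant diagram $D(E[1]\times L)^\op\to\C$, and the conditions ``equal to $[X,Y]$ on $\partial\Delta[1]\times L$'' and ``relative to $K$'' pin such a diagram down on a subcategory. Since $E[1]$ is not the nerve of a poset (it is not even a marked simplicial complex), \Cref{thm:Sd-D-acyclic} cannot be applied to $D(E[1]\times L)^\op$ directly, so I would first produce a $\Delta[1]$-homotopy and then promote it. Thus the first step is to construct a homotopical, Reedy fibrant diagram $H\colon D(\Delta[1]\times L)^\op\to\C$ whose restrictions along $D(\Delta^{\{0\}}\times L)^\op\iso DL^\op$ and $D(\Delta^{\{1\}}\times L)^\op\iso DL^\op$ are $X$ and $Y$, whose restriction along each vertical edge $D(\Delta[1]\times\{\rho\})^\op\iso D[1]^\op$ (for $\rho$ a vertex of $L$) is a $1$-simplex of $\Nf\C$ representing $f_\rho$, and whose restriction to $D(\Delta[1]\times K)^\op$ is the canonical homotopy determined by $f|(\Sd K)^\op=\mathrm{id}$ --- which is the constant homotopy whenever $X$ and $Y$ agree on $DK^\op$.

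To build $H$: the product $\Delta[1]\times L$ is a marked simplicial complex (finite if $L$ is), with subcomplex $K':=(\partial\Delta[1]\times L)\cup(\Delta[1]\times K)$, on whose $D(K')^\op$ the diagram is already prescribed. It remains to define $H$ on the nondegenerate simplices of $\Delta[1]\times L$ outside $K'$; each such simplex is a shuffle of an edge of $\Delta[1]$ with a nondegenerate simplex $\tau$ of $L$ --- concretely, $\tau$ with a cut vertex. One assembles $X$, $Y$ and the components of $f$ into a Reedy fibrant diagram on $\Sd(\Delta[1]\times L)^\op$ by induction on dimension, at each shuffle factoring an appropriate comparison map as a weak equivalence followed by a fibration onto the already-built matching object --- a relative mapping-cylinder construction; over the vertical edges this is a Reedy fibrant replacement of the map $(1,f_\rho)$, so both legs of the corresponding $\Sd[1]$-span are weak equivalences, and compatibility with the data over $K$ is forced by $f|(\Sd K)^\op=\mathrm{id}$. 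With $H$ now defined on $(D(K')\cup\Sd(\Delta[1]\times L))^\op$, \Cref{thm:Sd-D-acyclic} extends it to $D(\Delta[1]\times L)^\op$ (exhausting $L$ by finite subcomplexes if necessary); homotopicality of the extension is clear from the construction.

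The second step promotes $H$ to an $E[1]$-homotopy. By construction, for each vertex $\rho$ of $L$ the restriction of $H$ along $D(\Delta[1]\times\{\rho\})^\op\iso D[1]^\op$ is a $1$-simplex of $\Nf\C$ representing the weak equivalence $f_\rho$ of $\C$ (its $\Sd[1]$-span has both legs weak equivalences), hence invertible in $\Ho(\Nf\C)$, i.e.\ an equivalence; thus the transpose $\widehat H\colon\Delta[1]\to(\Nf\C)^L$ is a pointwise equivalence. A pointwise equivalence is an equivalence in the functor quasicategory $(\Nf\C)^L$, and an equivalence extends along $\Delta[1]\into E[1]$; moreover, since the restriction $(\Nf\C)^L\to(\Nf\C)^K$ is an isofibration and $\widehat H$ restricts over $K$ to the edge underlying the prescribed canonical homotopy, which remains an equivalence there, the $E[1]$-extension of $\widehat H$ can be chosen lying over the prescribed $E[1]$-diagram on $K$ (argue inside the relevant fibre of the restriction map). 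Transposing back gives the desired $E[1]$-homotopy from $X$ to $Y$ relative to $K$.

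The main obstacle is the construction of $H$: organising $X$, $Y$ and $f$ into a single Reedy fibrant diagram over the nondegenerate simplices of the product $\Delta[1]\times L$ requires coherent choices of homotopies and higher homotopies indexed by the shuffles, together with a verification that the assembled diagram is Reedy fibrant and homotopical. The remaining ingredients --- \Cref{thm:Sd-D-acyclic}, the passage from $\Delta[1]$- to $E[1]$-homotopies, and keeping everything relative to $K$ --- are comparatively formal, the last resting entirely on the hypothesis $f|(\Sd K)^\op=\mathrm{id}$.
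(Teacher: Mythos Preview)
The paper does not prove this lemma --- it is quoted from \cite[Lem.~4.6]{szumilo:two-models} --- so there is nothing in the present paper to compare your argument against. Your two-step outline (build a $\Delta[1]$-homotopy by a Reedy-inductive mapping-cylinder construction over the shuffles of $\Delta[1]\times L$, extend from $\Sd$ to $D$ via \Cref{thm:Sd-D-acyclic}, then promote to $E[1]$ using that the resulting edge in $(\Nf\C)^L$ is a pointwise equivalence and that $(\Nf\C)^L\to(\Nf\C)^K$ is an isofibration) is the expected strategy and is essentially how the source argues.

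One point needs attention. Your ``canonical homotopy determined by $f|(\Sd K)^\op=\mathrm{id}$'' on $D(\Delta[1]\times K)^\op$ is not well-defined from the hypotheses as stated: we are only told that $X$ and $Y$ agree on $(\Sd K)^\op$, not on all of $DK^\op$, so there is no constant homotopy to prescribe, and indeed the conclusion ``$E[1]$-homotopic relative to $K$'' (homotopy constant over $K$) does not literally make sense unless $X|DK^\op=Y|DK^\op$. You flag this yourself (``which is the constant homotopy whenever $X$ and $Y$ agree on $DK^\op$'') but do not say what to do otherwise. In every application of the lemma in this paper that stronger condition does hold, so your argument suffices for those; but as a proof of the lemma exactly as stated, this step requires either the stronger hypothesis or a more careful reading of ``relative to $K$''. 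The remaining ingredients --- the Reedy induction for $H$ (which you rightly identify as the technical heart, and which does require checking that the assembled diagram is Reedy fibrant, homotopical, and compatible with the prescribed data on $\partial\Delta[1]\times L$) and the isofibration lift for the $E[1]$-extension --- are sound.
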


Recall that a \textbf{cosieve} is functor $I \into J$ that is injective on objects, fully faithful, and such that if $i \in I$ and there is $i \to j$, then $j \in I$.

\begin{lemma}[{\cite[Prop.\ 1.19.(1)]{szumilo:two-models}}]\label{lem:extend-reedy}
 Let $I \into J$ be a cosieve of inverse categories and $\C$ a fibration category. Moreover, let $X \colon I \to \C$ be a Reedy fibrant diagram and $Y \colon J \to \C$ any diagram with a natural weak equivalence $w \colon Y|I \overset\we\to X$. Then there exists a Reedy fibrant diagram $\tX \colon J \to \C$ such that $\tX|I = X$, together with $\widetilde{w} \colon Y \to \tX$ such that $\widetilde{w}|I = w$.
\end{lemma}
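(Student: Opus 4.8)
The plan is to build $\tX$ and $\widetilde{w}$ by induction on a degree function $\deg\colon\ob J\to\bbN$ witnessing that $J$ is inverse, treating all objects of a fixed degree at once --- permissible because an inverse category has no non-identity morphisms between objects of equal degree. Write $J_{<n}\subseteq J$ for the full subcategory on objects of degree $<n$; it is again inverse, and $I\cap J_{<n}\hookrightarrow J_{<n}$ is again a cosieve. The inductive hypothesis at stage $n$ is that $\tX$ and $\widetilde{w}$ have been defined on $J_{<n}$ with $\tX$ a functor, $\tX|(I\cap J_{<n})=X|(I\cap J_{<n})$, $\tX$ Reedy fibrant, and $\widetilde{w}\colon Y|J_{<n}\to\tX$ a natural transformation (in fact objectwise a weak equivalence) restricting to $w$ on $I\cap J_{<n}$. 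Since $\deg$ is $\bbN$-valued and each stage merely extends the previous one, the stages assemble to a functor $\tX\colon J\to\C$ and a natural transformation $\widetilde{w}\colon Y\to\tX$ with the required properties; so everything reduces to the inductive step.

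Fix $n$ and an object $j$ of degree $n$. If $j\in I$, put $\tX_j:=X_j$ and $\widetilde{w}_j:=w_j$. If $j\notin I$, observe that every object of the matching category $\partial(j\slice J)$ has codomain of degree $<n$, so the composite $\partial(j\slice J)\overset{\cod}{\longrightarrow}J\overset{\tX}{\longrightarrow}\C$ is already defined; it is Reedy fibrant over the inverse category $\partial(j\slice J)$, hence its limit $M_j\tX$ exists --- here I invoke the standard fact, established prior to the present lemma, that a Reedy fibrant diagram over an inverse category admits a limit. For each $f\colon j\to j'$ in $\partial(j\slice J)$ the composite $Y_j\overset{Y(f)}{\longrightarrow}Y_{j'}\overset{\widetilde{w}_{j'}}{\longrightarrow}\tX_{j'}$ is defined, and naturality of $\widetilde{w}$ on $J_{<n}$ says exactly that these form a cone under $Y_j$ over $\tX\circ\cod$; the induced map $Y_j\to M_j\tX$ we factor, via axiom F5, as a weak equivalence $\widetilde{w}_j\colon Y_j\weq\tX_j$ followed by a fibration $\pi_j\colon\tX_j\fib M_j\tX$. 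This defines $\tX_j$ and $\widetilde{w}_j$.

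It remains to promote $\tX$ to a functor on $J_{\le n}$ and to verify Reedy fibrancy and naturality. For a non-identity $f\colon j\to j'$ with $\deg j=n$, let $\tX(f)$ be $\pi_j$ followed by the limit projection $M_j\tX\to\tX_{j'}$ indexed by $f$ (when $j\in I$ one checks, using that $j'\in I$ by the cosieve property, that this agrees with $X(f)$). Functoriality is precisely the compatibility of the limit cone of $M_j\tX$ with the diagram $\tX\circ\cod$; naturality of $\widetilde{w}$ at $f$ is immediate from the way the cone $Y_j\to M_j\tX$ was built; and $\widetilde{w}|I=w$ holds by construction. For Reedy fibrancy at an object $j$ of degree $n$: if $j\notin I$ the matching map is $\pi_j$, a fibration by construction; if $j\in I$ the cosieve property gives $\partial(j\slice J)=\partial(j\slice I)$ and $\tX|I=X|I$, so $M_j\tX=M_j X$ and the matching map is the one for $X$, a fibration because $X$ is Reedy fibrant.

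This is a routine induction, and the only genuine subtleties are: (i) the existence of the matching objects $M_j\tX$, which rests on the foundational fact that Reedy fibrant diagrams over inverse categories have limits (itself proved by a parallel, prior induction on degree); and (ii) the one essential use of the cosieve hypothesis, namely that $j\in I$ forces $\partial(j\slice J)=\partial(j\slice I)$, which is exactly what keeps the prescribed values $\tX|I=X$ compatible with Reedy fibrancy of $\tX$ over all of $J$ --- without it, a morphism from an object of $I$ to an object outside $I$ could not be accommodated.
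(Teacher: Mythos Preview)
Your proof is correct and is precisely the standard inductive construction; the paper itself offers no proof for this lemma but cites Szumi{\l}o's Proposition~1.19.(1), whose argument proceeds along the same lines you have sketched.
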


Let $F \colon \C \to \D$ be a homotopical (but not necessarily exact) functor between fibration categories. We define a simplicial set $\C_F$ by:
 \[ (\C_F)_m := \{ (X \in (\Nf\C)_m, Y \in (\Nf\D)_m, w \colon FX \overset{\sim}{\to} Y) \} \]
 Intuitively, an $m$-simplex of $\C_F$ consists of an $m$-simplex of $X \in (\Nf\C)_m$, together with a fibrant replacement $Y$ of $FX$.
 
 \begin{lemma}\label{lem:CF-to-NfC-acyclic-fibration}
 For any homotopical functor $F \colon \C \to \D$ between fibration categories, the canonical projection $\C_F \to \Nf\C$ is an acyclic fibration. In particular,  $\C_F$ is a quasicategory.
\end{lemma}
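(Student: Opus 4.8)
The plan is to verify that the canonical projection $p \colon \C_F \to \Nf\C$ has the right lifting property against every boundary inclusion $\partial\Delta[n] \into \Delta[n]$, $n \geq 0$; this is exactly the assertion that $p$ is an acyclic fibration of simplicial sets, and the quasicategory claim will then follow formally. First I would unwind such a lifting problem using \Cref{thm:fake-adjunction}. A map out of $\partial\Delta[n]$ is a compatible family of maps out of its faces, so the same representability identifies maps $\partial\Delta[n] \to \Nf\C$ with homotopical, Reedy fibrant diagrams $D(\partial\Delta[n])^\op \to \C$ (and similarly over $\D$). Consequently a commutative square with top edge $\partial\Delta[n] \to \C_F$ and bottom edge $\Delta[n] \to \Nf\C$ is the same data as: a homotopical, Reedy fibrant $X \colon D[n]^\op \to \C$; a homotopical, Reedy fibrant $Y_0 \colon D(\partial\Delta[n])^\op \to \D$; and a natural weak equivalence $w_0 \colon FX|_{D(\partial\Delta[n])^\op} \to Y_0$. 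A diagonal filler $\Delta[n] \to \C_F$ is then precisely an extension of the pair $(Y_0,w_0)$ to a homotopical, Reedy fibrant $Y \colon D[n]^\op \to \D$ together with a natural weak equivalence $w \colon FX \to Y$ restricting to $w_0$.

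To construct this extension I would invoke \Cref{lem:extend-reedy}, and the point that requires care is recognizing that it applies. The key observation is that $D(\partial\Delta[n])^\op \into D[n]^\op$ is a cosieve of inverse categories: the objects $(k,\varphi)$ of $D[n]$ with $\varphi \colon [k] \to [n]$ non-surjective are closed under precomposition with injections (the image only shrinks), hence form a sieve in $D[n]$, i.e.\ a cosieve in $D[n]^\op$, the complementary objects being exactly the surjective $\varphi$. Applying \Cref{lem:extend-reedy} to this cosieve, the Reedy fibrant diagram $Y_0$, the arbitrary diagram $FX$, and the natural weak equivalence $w_0$ yields a Reedy fibrant $Y \colon D[n]^\op \to \D$ with $Y|D(\partial\Delta[n])^\op = Y_0$ and a natural transformation $w \colon FX \to Y$ with $w|D(\partial\Delta[n])^\op = w_0$. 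I would additionally use that the inductive construction behind \Cref{lem:extend-reedy} obtains each new value of $Y$ by factoring a map out of the corresponding value of $FX$ as a weak equivalence followed by a fibration, so that every new component of $w$ is a weak equivalence and hence $w$ is a natural weak equivalence. Finally, since $w$ is a natural weak equivalence and both $X$ and $F$ are homotopical, two-out-of-three applied to the naturality squares of $w$ shows that $Y$ is homotopical, so $(X,Y,w)$ is an $n$-simplex of $\C_F$ solving the lifting problem. This proves that $p$ is an acyclic fibration.

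For the last sentence I would check inner horn filling in $\C_F$ directly: given an inner horn $h \colon \Lambda^k[m] \to \C_F$, its composite $ph$ extends to some $g \colon \Delta[m] \to \Nf\C$ because $\Nf\C$ is a quasicategory by \Cref{thm:szumilo-main}; since $p$ is an acyclic fibration it lifts against the monomorphism $\Lambda^k[m] \into \Delta[m]$, so the evident square has a diagonal filler $\Delta[m] \to \C_F$, which restricts to $h$. Hence $\C_F$ is a quasicategory. The whole argument is thus little more than an application of \Cref{lem:extend-reedy}; the only real work is the translation of the boundary lifting problem in the first paragraph — in particular the identification of $D(\partial\Delta[n])^\op$ as a cosieve of $D[n]^\op$ — and the observation that the comparison map produced by \Cref{lem:extend-reedy} may be taken to be a natural weak equivalence, which is what forces the extended $\D$-diagram to be homotopical.
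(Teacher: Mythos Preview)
Your proof is correct and follows the same approach as the paper's: translate the boundary lifting problem into an extension problem for Reedy fibrant diagrams via \Cref{thm:fake-adjunction}, then solve it using \Cref{lem:extend-reedy}. The paper's proof is far terser (it cites only \Cref{thm:fake-adjunction} at the final step, presumably intending \Cref{lem:extend-reedy} as well), and you correctly supply the details it omits---the cosieve check for $D(\partial\Delta[n])^\op \hookrightarrow D[n]^\op$, that the extended transformation is a levelwise weak equivalence, and that the resulting $\D$-diagram is homotopical.
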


\begin{proof}
We need to find a lift for the following square:
 \begin{ctikzpicture}
   \matrix[diagram]
   {
     |(a)| \partial \Delta[m]  & |(b)| \C_F \\
     |(c)| \Delta[m] & |(d)| \Nf\C \\
   };

   \draw[->] (a) to (b);
   \draw[->,inj] (a) to (c);
   \draw[->] (b) to (d);
   \draw[->] (c) to (d);
 \end{ctikzpicture}
Thus we are given:
\begin{itemize}
 \item homotopical, Reedy fibrant $X \colon D[m]^\op \to \C$ (bottom map)
 \item homotopical, Reedy fibrant $Y \colon D(\partial \Delta[m])^\op \to \D$, together with a natural weak equivalence $w \colon FX|D(\partial \Delta[m])^\op \to Y$ (top map).
\end{itemize}
We need to find extensions of $Y$ and $w$ to $D[m]^\op$. This follows immediately by \Cref{thm:fake-adjunction,lem:extend-reedy}.
\end{proof}

\begin{construction}\label{rmk:Nf-on-homotopical}
This allows us to define the value of $\Nf$ on a functor between fibration categories that is not necessarily exact. Indeed, by \Cref{lem:CF-to-NfC-acyclic-fibration}, the canonical projection $\C_F \to \Nf\C$ is an acyclic fibration, and hence admits a section. Postcomposing this section with the projection $\C_F \to \Nf\D$ yields a map $\Nf\C \to \Nf\D$ that we may take as $\Nf F$.

Suppose $F \colon \C \rightleftarrows \D : \! G$ are adjoint functors between fibration categories with $G$ exact and $F$ homotopical. Assume moreover that both the unit and the counit of this adjunction are natural weak equivalences. Then the diagram:
 \begin{ctikzpicture}
   \matrix[diagram]
   {
                     & |(a)| \C_F &                \\
     |(b)| \Nf\D &                & |(c)| \Nf\C \\
   };

   \draw[->] (a) to (b);
   \draw[->] (a) to (c);
   \draw[->] (b) to node[below] {$\Nf G$} (c);
 \end{ctikzpicture}
 commutes up to $E[1]$-homotopy by \Cref{lem:rel-homot}. Since by \Cref{thm:szumilo-main}, $\Nf G$ is known to be a categorical equivalence, it follows that $\Nf F$ and $\Nf G$ are each other homotopy inverses.

In particular, given a fibration $f \colon A \to B$ in a fibration category $\C$, one gets a homotopical functor $f_! \colon \C \fibslice A \to \C \fibslice B$ and thus we may define $\Nf f_!$ as above. Unwinding the definitions, we see that $\Nf f_! X$ is given by applying the functor $f_!$ to $X$ pointwise and then replacing the resulting diagram fibrantly. If $f$ was an acyclic fibration to start with, the functors $\Nf f_!$ and $\Nf f^*$ are clearly seen to be homotopy inverses in Joyal's model structure on $\sSet$. \qed
\end{construction}

\section{Slices of the quasicategory of frames} \label{sec:slices}

Let $\C$ be a fibration category and $A \colon D[0]^\op \to \C$ a $0$-simplex in $\Nf\C$. Our main goal in this section is to construct a map $\Nf\C\slice A \to \Nf(\C \fibslice A_0)$ and prove that it is an equivalence. We begin by describing the $n$-simplices in each of these quasicategories.

An $n$-simplex in $\Nf\C \slice A$ is a homotopical, Reedy fibrant diagram $X\colon D[n+1]^\op \to \C$ such that $X_{(n+1)^k} = A_{0^k}$.

An $n$-simplex in $\Nf(\C \fibslice A_0)$ is a homotopical, Reedy fibrant diagram $D[n]^\op \to \C \fibslice A_0$ or equivalently, it is a homotopical, Reedy fibrant diagram $\tA\colon D_a [n]^\op \to \C$.

There is a natural inclusion $D_a[n] \into D[n+1]$ sending $\varphi \colon [k] \to [n] \in D_a[n]$ to $\varphi' \colon [k+1] \to [n+1]$ defined by:
\[ \varphi'(i) = \left\{ \begin{array}{ll}
\varphi(i) & \text{ if } i \leq k \\
n+1        & \text{ if } i = k+1
\end{array}\right. \]
That is, the objects of $D_a[n]$, regarded as a subcategory of $D[n+1]$ are:
\[ \{ [k] \overset{\varphi}{\to} [n+1]~|~\varphi(k)=n+1 \text{ and } \varphi(k-1) \leq n\}.\]

Given $\varphi \colon [k] \to [n] \in D_a[n]$, the induced map
\[ D_a[n] \slice \varphi \to D[n+1] \slice \varphi' \]
is easily seen to be a sieve. We claim that it is moreover cofinal. By \cite[Thm.~IX.3.1]{mac-lane:cwm}, it suffices to show that that for any $\psi \colon [l] \to [n+1] \in D[n+1]$, the slice category $\psi \slice (D_a[n] \slice \varphi')$ is nonempty and connected. If $l = k$, this is clear; otherwise, we must have $l < k$, and thus $\psi' \colon [l+1] \to [n+1]$ (where $\psi \mapsto \psi'$ is as defined above) is the initial object $\psi \slice (D_a[n] \slice \varphi')$.

Thus the inclusion $D_a[n] \into D[n+1]$ satisfies the assumptions of \cite[Lem.\ 2.16]{kapulkin-szumilo} and precomposing with it gives a well-defined map  $\S_A \colon \Nf\C \slice A \to \Nf(\C \fibslice A_0)$. Explicitly, given an $n$-simplex $X\colon D[n+1]^\op \to \C$ in $\Nf\C \slice A$, the $n$-simplex $\S_A X \colon D_a[n]^\op \to \C$ in $\Nf(\C \fibslice A_0)$ is given by:
\[ (\S_A X)_{a_1 a_2 \ldots a_i} := X_{a_1 a_2 \ldots a_i (n+1)}. \]

\begin{theorem}\label{thm:slices}
 Let $\C$ be a fibration category and let $A \colon D[0]^\op \to \C$ be a $0$-simplex in $\Nf\C$. Then the map $\S_A \colon \Nf\C \slice A \to \Nf(\C \fibslice A_0)$ is a categorical equivalence.
\end{theorem}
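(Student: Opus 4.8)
The plan is to verify the hypotheses of \Cref{lem:cat-equiv-lifting} for the map $\S_A$. Thus, given $n \in \bbN$ and a commutative square with left-hand map $\partial\Delta[n] \into \Delta[n]$, top map $u \colon \partial\Delta[n] \to \Nf\C\slice A$, and bottom map $v \colon \Delta[n] \to \Nf(\C \fibslice A_0)$ satisfying $\S_A \circ (\text{incl})^* u = v|\partial\Delta[n]$, I must produce a filler $w \colon \Delta[n] \to \Nf\C\slice A$ with $w|\partial\Delta[n] = u$, together with an $E[1]$-homotopy from $\S_A w$ to $v$ relative to the boundary. Unwinding the descriptions of the $n$-simplices given just before the statement, $u$ corresponds to a homotopical, Reedy fibrant diagram $U \colon D(\partial\Delta[n])^\op \to \C$ with the appropriate constancy condition over $A$; the filler $w$ must extend $U$ to $D[n]^\op$ (after adjusting $\Delta[n]$ to $\Delta[n+1]$ via the slice convention); and $v$ corresponds to a homotopical, Reedy fibrant diagram $V \colon D_a[n]^\op \to \C$ restricting compatibly on the boundary.

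The construction of $w$ will proceed in two stages. First, on the subcategory $D_a[n]^\op \subseteq D[n+1]^\op$ (the inclusion described above), I want to install a diagram weakly equivalent to $V$ — but since $V$ is already Reedy fibrant and its restriction to the boundary agrees with what $U$ prescribes there, I can in fact use $V$ directly on $D_a[n]^\op$, so that $\S_A w = V$ on the nose and the required $E[1]$-homotopy will be trivial (constant). Second, I need to extend the diagram from $D(\partial\Delta[n])^\op \cup D_a[n]^\op$ to all of $D[n+1]^\op$, keeping it homotopical and Reedy fibrant, without altering the given values. This is where \Cref{thm:Sd-D-acyclic} enters: one identifies $D(\partial\Delta[n]) \cup D_a[n]$ — or more precisely a subcategory containing $D$ of a subcomplex together with the $\Sd$ of the relevant complex — as being of the form $DK \cup \Sd L$ for an inclusion $K \into L$ of finite marked simplicial complexes, and then invokes that lemma to extend a Reedy fibrant diagram to $DL^\op$. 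The key combinatorial point is that $D_a[n]$ contains enough of $D[n+1]$ (namely all the simplices hitting the cone vertex $n+1$, which includes $\Sd([n] \star \{n+1\}) = \Sd[n+1]$ restricted appropriately) so that the pair $(D(\partial\Delta[n]) \cup D_a[n], D[n+1])$ does match the shape $(DK \cup \Sd L, DL)$ with $L = \Delta[n]$ as a marked simplicial complex inside $\N[n]$ and $K = \partial\Delta[n]$; one must check that the homotopical structures agree and that the boundary and cone data assemble into a single Reedy fibrant diagram on $D(\partial\Delta[n])^\op \cup \Sd([n+1])^\op$ before applying the lemma.

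With $w$ so constructed, $w|\partial\Delta[n] = u$ holds by construction and $\S_A w = v$ on the nose, so the $E[1]$-homotopy required by \Cref{lem:cat-equiv-lifting} is the constant one, relative to the boundary trivially. Hence $\S_A$ is a categorical equivalence. I expect the main obstacle to be the bookkeeping in the second stage: verifying that the union $D(\partial\Delta[n]) \cup D_a[n]$, sitting inside $D[n+1]$, is exactly the subcategory $DK \cup \Sd L$ to which \Cref{thm:Sd-D-acyclic} applies, and that the two partially-defined diagrams ($U$ on the boundary part, $V$ on the augmented part) are genuinely compatible on the overlap $D(\partial\Delta[n])^\op \cap D_a[n]^\op$ — this compatibility is precisely the content of the hypothesis $\S_A \circ (\text{incl})^* u = v|\partial\Delta[n]$ on the original square, but spelling out the overlap (which consists of simplices of $\partial\Delta[n]$ that also hit the vertex $n+1$ after the inclusion) and checking Reedy fibrancy of the glued diagram requires care. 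A secondary point worth isolating is that $\S_A$ lands in the correct place — i.e., that precomposition with $D_a[n] \into D[n+1]$ does send homotopical Reedy fibrant diagrams to homotopical Reedy fibrant diagrams — which is asserted via \cite[Lem.\ 2.16]{kapulkin-szumilo} in the setup and should be recalled in the proof.
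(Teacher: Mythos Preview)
Your overall strategy—apply \Cref{lem:cat-equiv-lifting} and then extend diagrams using \Cref{thm:Sd-D-acyclic}—matches the paper's, but the attempt to obtain $\S_A w = v$ \emph{strictly}, rather than up to $E[1]$-homotopy, does not go through, and this is precisely the obstacle you flagged as ``bookkeeping''.

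After passing through the join--slice adjunction, the datum $u$ corresponds to a Reedy fibrant diagram on $D(\Lambda^{n+1}[n+1])^\op$, not on $D(\partial\Delta[n])^\op$; you gesture at this with ``via the slice convention'' but the confusion persists throughout. More importantly, the subcategory $D(\Lambda^{n+1}[n+1]) \cup D_a[n] \subset D[n+1]$ is \emph{not} of the form $DK \cup \Sd L$ for any inclusion $K \into L$ of marked simplicial complexes. The image of $D_a[n]$ in $D[n+1]$ consists of all $\varphi\colon [k]\to[n+1]$ with $\varphi(k)=n+1$ and $\varphi(k-1)\le n$; this contains the two objects of $\Sd[n+1]$ not already in the horn, but it also contains many non-injective $\varphi$ that lie neither in $\Sd[n+1]$ nor in $D(\Lambda^{n+1}[n+1])$ (e.g.\ any surjection $[n+2]\to[n+1]$ hitting $n+1$ exactly once). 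So \Cref{thm:Sd-D-acyclic} cannot be applied so as to preserve the values on all of $D_a[n]^\op$, and hence you cannot guarantee $\S_A w = v$ on the nose. The union is also not a cosieve in $D[n+1]^\op$ (restricting the identity $[n+1]\to[n+1]$ to $\{0,\dots,n\}$ lands outside both pieces), so \Cref{lem:extend-reedy} does not apply directly either.

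The paper's proof handles this by giving up on strict commutativity. It defines $\tX$ only on $D(\Lambda^{n+1}[n+1])^\op \cup \Sd[n+1]^\op$, using a single value of $X'$ for the two new objects of $\Sd[n+1]$; it then invokes \Cref{lem:extend-reedy} to replace this (generally non-Reedy-fibrant) diagram by a Reedy fibrant one without changing it on the horn part, applies \Cref{thm:Sd-D-acyclic} to extend to $D[n+1]^\op$, and finally uses \Cref{lem:rel-homot} to produce the required $E[1]$-homotopy $\S_A\tX \sim v$ relative to the boundary. Both the fibrant-replacement step and the appeal to \Cref{lem:rel-homot} are missing from your outline, and with the lemmas available in the paper neither can be avoided.
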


\begin{proof}
 By \Cref{lem:cat-equiv-lifting}, we need to find a filler $\tX$ for the following square:
 \begin{ctikzpicture}
   \matrix[diagram]
   {
     |(a)| \partial \Delta [n] & |(b)| \Nf\C \slice A \\
     |(c)|  \Delta [n] & |(d)| \Nf(\C \fibslice A_0) \\
   };

   \draw[->] (a) to node[above] {$X$} (b);
   \draw[->,inj] (a) to (c);
   \draw[->] (b) to node[right] {$\S_A$} (d);
   \draw[->] (c) to node[above] {$X'$} (d);
 \end{ctikzpicture}
By the (join $\adjoint$ slice)-adjunction, a simplicial map $X \colon \partial \Delta[n] \to \Nf\C \slice A$ corresponds naturally to a map $\partial \Delta [n] \star \Delta [0] = \Lambda^{n+1}[n+1] \to \Nf(\C)$ whose restriction to the $(n+1)$-st vertex is $A$. By \Cref{thm:fake-adjunction}, this in turn corresponds naturally to a diagram $X \colon D(\Lambda^{n+1}[n+1])^{\op} \to \C$ extending $A$, as explained in the following paragraph.

Notice that $D(\Lambda^{n+1}[n+1])$ is the subcategory of $D[n+1]$ consisting of those monotone functions $\varphi \colon [k] \to [n+1]$ that skip some $i \in [n] \subseteq [n+1]$. That is, the objects of $D(\Lambda^{n+1}[n+1]) $ are:
\[ \{ [k] \overset{\varphi}{\to} [n+1]~|~\text{there exists } i \leq n \text{ s.th.~} i \not\in \mathrm{im}(\varphi) \} \]
and thus, in particular, we have $X_{(n+1)^k} = A_{0^k}$.

Moreover, an $n$-simplex $X' \colon \Delta[n] \to \Nf(\C \fibslice A_0)$ corresponds naturally to a diagram $X' \colon D_a[n]^\op \to \C$ with $X'_{\varnothing} = A_0$.

We seek an extension $\tX \colon D[n+1]^\op \to \C$ such that:
\begin{itemize}
 \item $\tX|D(\Lambda^{n+1}[n+1]) = X$ and
 \item $\S_A \tX \sim_{E[1]} X'$ relative to the boundary.
\end{itemize}
By \Cref{thm:Sd-D-acyclic}, it suffices to define $\tX \colon D(\Lambda^{n+1} [n+1])^\op \cup \Sd[n+1]^\op \to \C$. Define $\tX = X$ on $D(\Lambda^{n+1} [n+1])^\op$ and $\tX_{[n]} = \tX_{[n+1]} = X'_{[n+1]}$.

This gives a homotopical diagram  $\tX \colon D(\Lambda^{n+1} [n+1])^\op \cup \Sd[n+1]^\op \to \C$, which, by \Cref{lem:extend-reedy}, we may fibrantly replace without changing its value on $D(\Lambda^{n+1} [n+1])^\op$. Applying \Cref{thm:Sd-D-acyclic}, we extend $\tX$ to $D[n+1]^\op$, as required.

It is clear that the upper triangle in the diagram above commutes. The lower triangle commutes up to $E[1]$-homotopy relative to the boundary by \Cref{lem:rel-homot}.
\end{proof}

\section{Adjoints between quasicategories of frames} \label{sec:adjoints}

The main goal of this section is the proof of the following theorem:

\begin{theorem}\label{thm:Nf-preserves-adjoints}
 Let $F \colon \C \rightleftarrows \D : \! G$ be an adjoint pair of exact functors. Then $\Nf F \colon \Nf\C \rightleftarrows \Nf\D :\! \Nf G$ is an adjunction of quasicategories.
\end{theorem}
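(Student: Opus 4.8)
The plan is to verify the adjunction through the comma-object criterion of \Cref{lem:adjoint-characterization}: it suffices to show that $\Nf G$ is a right adjoint, i.e.\ that for every $0$-simplex $x$ of $\Nf\C$ the comma quasicategory $x\slice\Nf G$ has an initial object, and to exhibit that initial object in a form that manifestly comes from $\Nf F$ and the unit of $F\adjoint G$, so that the left adjoint is identified with $\Nf F$. A preliminary remark is that $\Nf F$ and $\Nf G$ are \emph{strict}: an exact functor preserves homotopical Reedy fibrant diagrams $D[m]^\op\to\C$, because the matching objects over $D[m]^\op$ are built from iterated limits along fibrations, which exact functors preserve; hence $\Nf F(X)=F\circ X$ and $\Nf G(X')=G\circ X'$ with no fibrant replacement. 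In particular $\Nf F(x)=F\circ x$ and $\Nf G\Nf F(x)=GF\circ x$, and $F$ carries Reedy fibrant diagrams in $\C$ to Reedy fibrant diagrams in $\D$.

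First I construct a unit $1$-simplex. Writing $x_0$ for the value of $x$ at the terminal object of $D[0]$, factor the map $\langle 1_{x_0},\eta_{x_0}\rangle\colon x_0\to x_0\times GFx_0$ as a weak equivalence followed by a fibration, obtaining a span $x_0\xleftarrow{\we} P\fib GFx_0$ with $P\fib x_0\times GFx_0$; this is a Reedy fibrant diagram on $\Sd[1]^\op$. Gluing it to $x$ and to $GF\circ x$ along the vertices $\{0\}$ and $\{1\}$ gives a homotopical Reedy fibrant diagram on $(D(\partial\Delta[1])\cup\Sd[1])^\op$, which extends to $D[1]^\op$ by \Cref{thm:Sd-D-acyclic}; the result is a $1$-simplex $u_x\colon x\to\Nf G\Nf F(x)$ of $\Nf\C$, and $(\Nf F(x),u_x)$ is an object of $x\slice\Nf G$. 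Two such choices are $E[1]$-homotopic relative to the boundary by \Cref{lem:rel-homot}. The same path-object recipe, applied levelwise and made Reedy fibrant by \Cref{thm:Sd-D-acyclic} and \Cref{lem:extend-reedy}, upgrades (after choosing a section of an acyclic fibration, exactly as in \Cref{rmk:Nf-on-homotopical}) to a simplicial map $e_x\colon\Nf F(x)\slice\Nf\D\to x\slice\Nf G$ with $e_x(1_{\Nf F(x)})=(\Nf F(x),u_x)$: on an $n$-simplex, i.e.\ a Reedy fibrant diagram $V\colon D[n+1]^\op\to\D$ with $\{0\}$-component $F\circ x$, it records the pair consisting of $Y:=V|_{\{1,\dots,n+1\}}$ and the diagram obtained by gluing $u_x$ to $G\circ Y$ along the edge $\{0,1\}$ and replacing fibrantly.

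The heart of the proof is that $e_x$ is a categorical equivalence; granting this, since the coslice quasicategory $\Nf F(x)\slice\Nf\D$ has an initial object $1_{\Nf F(x)}$, so does $x\slice\Nf G$, and we conclude by \Cref{lem:adjoint-characterization}. To see that $e_x$ is a categorical equivalence one uses \Cref{lem:cat-equiv-lifting}: a boundary-lifting problem for $e_x$ unwinds, via the pullback defining $x\slice\Nf G$, the (join $\adjoint$ slice)-adjunction, and \Cref{thm:fake-adjunction}, to the task of extending a Reedy fibrant diagram $V\colon D(\Lambda^0[n+1])^\op\to\D$ over $D[n+1]^\op$ so that its image under $e_x$ agrees up to $E[1]$-homotopy, relative to the boundary, with a prescribed $n$-simplex $(Y,Z)$ of $x\slice\Nf G$ (in which, necessarily, the $\{1,\dots,n+1\}$-component of $Z$ is $G\circ Y$). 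One produces the extension by transposing across the ordinary adjunction $F\adjoint G$: the maps out of the $\{0\}$-component of $Z$ correspond under $\Hom(x,Gc)\cong\Hom(Fx,c)$ to maps into $\D$, the inner (``mixed'') objects are carried over by $F$, and the $\{1,\dots,n+1\}$-component is reattached through the counit $\varepsilon$ by the same path-object construction as for $u_x$; the required Reedy fibrancy of the transposed diagram is exactly where exactness of \emph{both} $F$ and $G$ enters ($G$ preserves matching objects because it preserves limits along fibrations; $F$ does so because it is exact), and the final gluing and fibrant replacement is handled by \Cref{thm:Sd-D-acyclic} and \Cref{lem:extend-reedy}, as in the proof of \Cref{thm:slices}.

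Finally, since $e_x(1_{\Nf F(x)})=(\Nf F(x),u_x)$, the initial object of $x\slice\Nf G$ is witnessed by $\Nf F(x)$ together with the unit edge $u_x$ built from $\eta$; by \Cref{lem:adjoint-characterization} this shows $\Nf G$ is a right adjoint with left adjoint $\Nf F$, and—the family $(u_x)$ being natural up to $E[1]$-homotopy by \Cref{lem:rel-homot}, as in \Cref{rmk:Nf-on-homotopical}—the data assembles into an adjunction $(\Nf F,\Nf G,u,\varepsilon)$ in $\qCat_2$, with unit and counit induced by $\eta$ and $\varepsilon$ and the triangle identities holding up to $E[1]$-homotopy. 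The step I expect to be the main obstacle is the transposition in the third paragraph: reconciling the combinatorics of the comma quasicategory $x\slice\Nf G$ with the $1$-categorical adjunction while keeping every diagram in sight Reedy fibrant is precisely what forces both $F$ and $G$ to be exact and requires a careful application of Szumi\l{}o's extension lemmas.
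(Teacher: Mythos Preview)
Your overall strategy and your construction of the unit $1$-simplex coincide with the paper's (\Cref{unit}): both reduce the theorem to \Cref{lem:adjoint-characterization} and build $u_x$ from a factorization of $\langle 1,\eta\rangle$. The divergence is in how initiality is established. The paper does \emph{not} construct a comparison map $e_x\colon \Nf F(x)\slice\Nf\D\to x\slice\Nf G$; it shows directly that the unit is initial in $x\slice\Nf G$ by solving the extension problem along $\Delta[0]\star\partial\Delta[n]\hookrightarrow\Delta[0]\star\Delta[n]$ (\Cref{prop:unit-is-initial}, with the warm-up case \Cref{lem:unit-for-1}). Your detour through $e_x$ forces you first to manufacture $e_x$ as a strict simplicial map (itself a nontrivial coherence task you defer to ``choosing a section of an acyclic fibration'') and then to solve a lifting problem for \Cref{lem:cat-equiv-lifting} that is no easier than the paper's extension problem; the extra layer buys nothing.

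The genuine gap is exactly the step you flag. The recipe ``transpose the maps out of the $\{0\}$-component, carry the mixed objects over by $F$, reattach the $\{1,\dots,n+1\}$-part via $\varepsilon$'' does not work as written: the mixed entries $Z_S$ (for $0\in S\subsetneq[n+1]$) are arbitrary objects of $\C$, not of the form $Gc$, so there is nothing to transpose for the maps among them, and simply applying $F$ will not produce data compatible with what is already prescribed on $D(\Lambda^0[n+1])^\op$, let alone Reedy fibrant. The paper's actual construction is different and quite specific: one sets $\tY_{[1+n]}:=\lim_{0\in S\subsetneq[1+n]}Y_S$ in $\C$ (a single limit, not a levelwise transport), puts $\tB_{(1+n]}:=F\tY_{[1+n]}$, and uses the universal property of $\eta$ to obtain the one missing structure map $\tB_{(1+n]}\to\lim_{0\notin S}B_S$. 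The delicate point---entirely absent from your sketch---is then to verify that the component $\tB_{(1+n]}\to B_1$ is a \emph{weak equivalence}, so that the extended diagram is homotopical. This is where the paper's auxiliary \Cref{lem:adjoints-preserve-homotopy} (adjoint transposes of homotopic maps are homotopic) is essential: one computes the transposes of two candidate maps and shows they agree up to homotopy via the factorization defining $u_x$. Without that lemma or an equivalent argument, neither your proposed extension nor the paper's would be known to land in the right simplicial set.
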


Recall that by \Cref{lem:adjoint-characterization}, a simplicial map $G \colon \D \to \C$ of quasicategories is a right adjoint if for any $x \in \C$, the quasicategory $x \slice G$ has an initial object (the \emph{unit} of the adjunction). We will therefore begin with the construction of a candidate unit of $\Nf F \adjoint \Nf G$.

\begin{construction}[Unit of $\Nf F \adjoint \Nf G$] \label{unit}
 Let $A \colon D[0]^\op \to \C$ be $0$-simplex in $\Nf\C$. We will construct a $1$-simplex $X \colon D[1]^\op \to \C$ in $A \slice \Nf G$ (thus, we must have $X|D[0] = A$). Since both $F$ and $G$ are exact, we put $X_{1^k} := GFA_{0^k}$.
 
 This defines $X$ on $D\partial \Delta[1]^\op$. By \Cref{thm:Sd-D-acyclic}, it suffices to extend it further to $\Sd[1]^\op$, that is, to find $X_1$ and a Reedy fibrant fraction $X_0 \leftarrow X_{01} \to X_1$. Define a factorization of the map $X_0 \to X_0 \times X_1$ as a weak equivalence followed by a fibration:
  \begin{ctikzpicture}
   \matrix[diagram]
   {
     |(a)| X_0 &              & |(b)| X_0 \times X_1 \\
               & |(c)| X_{01} &  \\
   };

   \draw[->] (a) to node[above] {$\langle 1_{X_0} \eta_{X_0}\rangle$} (b);
   \draw[->] (a) to node[below left] {$w$} node[above right] {$\we$} (c);
   \draw[->>] (c) to node[below right] {$\langle p_0, p_1 \rangle$} (b);
 \end{ctikzpicture}
This gives the extension to $\Sd[1]^\op \cup D\partial \Delta[1]^\op$, which in turn yields $X \colon D[1]^\op \to \C$ as required. \qed
\end{construction}

\begin{proposition}\label{prop:unit-is-initial}
 The $1$-simplex $X \colon D[1]^\op \to \C$ constructed above is initial in $A \slice \Nf G$.
\end{proposition}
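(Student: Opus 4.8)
The goal is to show that the $1$-simplex $X\colon D[1]^\op\to\C$ built in \Cref{unit} is an initial object of the comma quasicategory $A\slice\Nf G$. Unwinding the definition of the comma quasicategory $A\slice\Nf G$ as the pullback of $A\slice\Nf\C\to\Nf\C\leftarrow\Nf\D$ along $\Nf G$, and using \Cref{thm:fake-adjunction}, an $n$-simplex of $A\slice\Nf G$ is (essentially) a pair consisting of a homotopical Reedy fibrant diagram $D(\Delta[0]\star\Delta[n])^\op=D[n+1]^\op\to\C$ extending $A$ at the cone point, together with a homotopical Reedy fibrant diagram $D[n]^\op\to\D$ and a compatibility datum relating the two via $G$; but since $G$ is exact, $\Nf G$ is computed pointwise, so in fact an $n$-simplex amounts simply to a homotopical Reedy fibrant diagram $D[n+1]^\op\to\C$ together with the (pointwise $G$-applied, already Reedy fibrant) diagram on $D[n]^\op$ in $\D$. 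So it suffices to produce, for every $n\geq 1$ and every boundary $\partial\Delta[n]\to A\slice\Nf G$ whose initial vertex is $X$, a filler $\Delta[n]\to A\slice\Nf G$; this is precisely the lifting characterization of an initial object (for $n=1$ it gives existence of a morphism out of $X$, for $n\geq 2$ uniqueness up to the relevant higher coherence).

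First I would translate such a boundary into diagram data. A map $\partial\Delta[n]\to A\slice\Nf G$ with initial vertex $X$ corresponds, via the (join $\adjoint$ slice)-adjunction together with \Cref{thm:fake-adjunction}, to a homotopical Reedy fibrant diagram on a suitable subcategory of $D[n+1]$ — namely the part coming from $D(\Delta[0]\star\partial\Delta[n])^\op$ in $\C$, together with its $G$-image data on the $\D$-side — and the condition that the restriction to the edge through vertices $0$ and $n+1$ (or whatever labelling matches \Cref{unit}) is the specified $X$. The task is then to extend this diagram from that subcategory of $D[n+1]$ to all of $D[n+1]$, in a homotopical Reedy fibrant way, together with the matching $\D$-side extension, such that the resulting $n$-simplex's boundary is the prescribed one. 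The key technical tool is \Cref{thm:Sd-D-acyclic}: it suffices to define the extension on the union of the already-given subcategory with $\Sd[n+1]^\op$, so I only need to produce values on the finitely many nondegenerate simplices of $\Delta[n+1]$ not already covered, and then fibrantly replace (using \Cref{lem:extend-reedy}) before extending all the way by \Cref{thm:Sd-D-acyclic}.

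For the new values I would exploit the same idea as in \Cref{unit}: on the ``$1$-part'' of $\Delta[n+1]$ (the vertices $1,\dots$) one is forced by exactness to take $GF$ of the corresponding values of $A$; on the mixing simplices one uses the adjunction unit $\eta$ of $F\adjoint G$ together with path-object factorizations to build the required Reedy fibrant fractions, just as the single fraction $X_0\leftarrow X_{01}\to X_1$ was built in \Cref{unit}. One then invokes the triangle identities for $F\adjoint G$ (and \Cref{thm:right-homotopy-gives-homotopy} to pass between right-homotopy and equality in the homotopy category) to check that the various compatibility conditions on faces hold up to the homotopy that \Cref{lem:rel-homot} allows. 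Finally \Cref{lem:rel-homot} is used to see that the two faces of the filled simplex that are supposed to agree with prescribed data indeed agree relative to the appropriate sub-complex, completing the verification that the lift lands in $A\slice\Nf G$.

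The main obstacle I anticipate is purely combinatorial bookkeeping: identifying precisely which subcategory of $D[n+1]$ corresponds to the boundary data $\partial\Delta[n]\to A\slice\Nf G$ (the interaction of the join-slice adjunction with the functor $D$ and with the pullback defining the comma object), and then organizing the finitely many extra values on $\Sd[n+1]^\op$ so that all face compatibilities hold on the nose where they must and up to controlled $E[1]$-homotopy elsewhere. The homotopical input (exactness of $F$ and $G$, the triangle identities, path objects) is exactly the input used already in \Cref{unit}; the work is in managing the indexing and in a careful application of \Cref{thm:Sd-D-acyclic}, \Cref{lem:extend-reedy}, and \Cref{lem:rel-homot} at the level of the full simplex rather than just the edge.
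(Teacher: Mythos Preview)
Your proposal misidentifies both the data given and the construction required. In the lifting problem for initiality, the boundary map $\partial\Delta[n]\to A\slice\Nf G$ already specifies the $\D$-side data $B$ on all of $D(\partial\N(1+n])^\op$: the objects $B_2,\dots,B_{n+1}$ are \emph{arbitrary} objects of $\D$, not $FA$ (only $B_{1^k}=FA_{0^k}$, since the edge $\{0,1\}$ is the unit $X$). Likewise the $\C$-side diagram $Y$ is already given on all of $D(\Lambda^0[1+n])^\op$. What must be produced is a \emph{single} new value $\tB_{(1+n]}\in\D$ together with a compatible $\tY_{[1+n]}\in\C$ mapping to all the boundary pieces, and such that the structure map $\tB_{(1+n]}\to B_1$ is a weak equivalence (this is the homotopicality condition on the $\D$-side). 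Your description ``take $GF$ of the values of $A$ on the $1$-part, and build path-object fractions on the mixing simplices'' does not address this: there is nothing to build at the vertices, and ad hoc path-object factorizations do not produce a single object mapping coherently to an entire matching cube.

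The paper's construction is the following specific idea, which your outline does not reach: set $\tY_{[1+n]}:=\lim_{0\in S\subsetneq[1+n]} Y_S$ (the matching object, which exists and is weakly equivalent to $Y_{\{0\}}=A_0$ by Reedy fibrancy), put $\tB_{(1+n]}:=F\tY_{[1+n]}$, and obtain the map $\tB_{(1+n]}\to\lim_{0\notin S}B_S$ as the adjoint transpose of the composite $\tY_{[1+n]}\to\lim Y_{S\cup\{0\}}\to G(\lim B_S)$. The only nontrivial verification is that the resulting $\tB_{(1+n]}\to B_1$ is a weak equivalence; this requires a separate lemma (\Cref{lem:adjoints-preserve-homotopy}) that adjoint transposition across an exact adjunction preserves the homotopy relation, which is what lets one compare that map with $F$ of the acyclic fibration $\tY_{[1+n]}\to A_0$. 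The triangle identities are never invoked, and \Cref{lem:rel-homot} plays no role here: initiality demands a \emph{strict} filler of $\Delta[n]\to A\slice\Nf G$, not one up to $E[1]$-homotopy, so there is no ``homotopy slack'' of the kind you rely on in the last step.
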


Our next goal is the proof of \Cref{prop:unit-is-initial}. Before doing so, we shall prove an auxiliary lemma that will allow us to take advantage of the adjunction $F \adjoint G$ later in our proof.

\begin{lemma}\label{lem:adjoints-preserve-homotopy}
 Let $F \colon  \C \rightleftarrows \D :\! G$ be an adjoint pair of exact functors. If two morphism $f,g \colon A \to GB$ are homotopic in $\C$, then their adjoint transposes $\overline{f}, \overline{g} \colon FA \to B$ are homotopic in $\D$.
\end{lemma}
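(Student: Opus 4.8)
\textbf{Proof proposal for \Cref{lem:adjoints-preserve-homotopy}.}

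The plan is to use the characterization of homotopy via right homotopies (\Cref{thm:right-homotopy-gives-homotopy}(2)): two maps into $GB$ are homotopic if and only if they are right-homotopic, and dually for maps into $B$ in $\D$ (after recalling that every object is fibrant, so the ``left/right'' distinction collapses and a right homotopy witnessed by any path object suffices). So suppose we are given a right homotopy between $f$ and $g$, i.e.\ a path object $P(GB) \fib GB \times GB$ together with a commutative square exhibiting $\langle f,g\rangle \colon A \to GB \times GB$ as factoring through $P(GB)$ after precomposing with an acyclic fibration $A' \overset\we\fib A$. The key move is to apply $G$ to a path object for $B$: since $G$ is exact, $G$ preserves fibrations, acyclic fibrations, and the terminal object, and it preserves the pullback $B \times B = B \times_1 B$ along the fibration $B \fib 1$; hence $G$ carries a path object $B \overset\we\to PB \fib B \times B$ to a diagram $GB \overset{?}\to GPB \fib GB \times GB$ in which the map $GPB \fib GB \times GB$ is again a fibration, and where $GB \to GPB$ is a weak equivalence by 2-out-of-3 (since $G$ preserves the weak equivalence $B \overset\we\to PB$, and the composite to $GB \times GB$ followed by either projection is the identity on $GB$). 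Thus $GPB$ is a path object for $GB$ in $\C$.

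Next I would note that a right homotopy can be transported between path objects: any two path objects for the same object are connected by a zig-zag, or more simply, given a right homotopy through an arbitrary path object $P(GB)$, one can compare it to one through $GPB$ using the fact that the relation of being right-homotopic is independent of the choice of path object (this is part of the content behind \Cref{thm:right-homotopy-gives-homotopy}(1)). So without loss of generality, $f$ and $g$ are right-homotopic via the path object $GPB$: there is an acyclic fibration $A' \overset\we\fib A$ and a map $h \colon A' \to GPB$ with $(Gp_0)\circ h = f\circ(\text{proj})$ and $(Gp_1)\circ h = g\circ(\text{proj})$, where $p_0,p_1 \colon PB \to B$ are the two endpoint fibrations.

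Now transpose across the adjunction. The adjunct of $h \colon A' \to GPB$ is a map $\overline{h} \colon FA' \to PB$. Naturality of the adjunction bijection $\Hom_\D(FA', PB) \iso \Hom_\C(A', GPB)$ with respect to postcomposition by $p_0$ (resp.\ $p_1$) shows that $p_0 \circ \overline{h}$ is the adjunct of $(Gp_0)\circ h = \overline{f}\circ F(\text{proj})$... more precisely, the adjunct of $f$ precomposed appropriately, so $p_0\circ\overline{h} = \overline{f}\circ F(\text{proj})$ and $p_1\circ\overline{h} = \overline{g}\circ F(\text{proj})$, where now $\overline{f},\overline{g}\colon FA \to B$ are the adjoint transposes and $F(\text{proj})\colon FA' \to FA$ is the image of the chosen acyclic fibration. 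Since $F$ is exact it preserves acyclic fibrations, so $FA' \overset\we\fib FA$, and hence $\overline{h}$ together with this map constitutes a right homotopy between $\overline{f}$ and $\overline{g}$ through the path object $PB$. By \Cref{thm:right-homotopy-gives-homotopy}(2) again, $\overline{f}$ and $\overline{g}$ are homotopic in $\D$, as desired.

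The main obstacle I anticipate is purely bookkeeping: being careful that the ``weak equivalence $A' \to A$'' part of the right homotopy survives under $F$ (it does, since $F$ preserves acyclic fibrations, which is exactly why exactness — rather than mere homotopicality — of $F$ is needed here), and checking the naturality-of-transpose compatibility so that the endpoints of $\overline h$ really are $\overline f$ and $\overline g$ rather than something off by a unit/counit. A secondary subtlety is justifying that $GPB$ is a legitimate path object and that right-homotopy can be computed through it; both follow from exactness of $G$ plus the standard fact that the right-homotopy relation does not depend on the chosen path object, which is implicit in \Cref{thm:right-homotopy-gives-homotopy}.
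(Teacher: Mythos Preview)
Your proposal is correct and follows essentially the same route as the paper's proof: apply $G$ to a path object $PB$ for $B$ to obtain a path object $GPB$ for $GB$ (using exactness of $G$), realize the given right homotopy through $GPB$, transpose across the adjunction, and use exactness of $F$ to see that $F\widetilde A \overset\we\fib FA$, yielding a right homotopy between $\overline f$ and $\overline g$ through $PB$. The paper compresses all of this into two commutative squares, leaving implicit the points you spell out (independence of the choice of path object and naturality of the adjunction bijection); your version is simply a more detailed rendering of the same argument. One small wording issue: in justifying that $GB \to GPB$ is a weak equivalence, do not appeal to ``$G$ preserves the weak equivalence $B \overset\we\to PB$'' directly, since exactness only guarantees preservation of acyclic fibrations; instead use that each projection $PB \to B$ is an acyclic fibration, hence so is $GPB \to GB$, and conclude by 2-out-of-3.
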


\begin{proof}
Choose a path object $B \overset\we\to PB \fib B \times B$ for $B$ in $\D$. By assumption and exactness of $G$, there exists a commutative square of the form:
 \begin{ctikzpicture}
   \matrix[diagram]
   {
     |(a)| \widetilde{A} & & |(b)| GPB \\
     |(c)| B & & |(d)| GB \times GB \\
   };

   \draw[->] (a) to (b);
   \draw[->>] (a) to node[left] {$\we$} (c);
   \draw[->] (b) to (d);
   \draw[->] (c) to node[above] {$\langle f, g \rangle$} (d);
 \end{ctikzpicture}
 By adjointness and exactness of $F$, we get:
 \begin{ctikzpicture}
   \matrix[diagram]
   {
     |(a)| F\widetilde{A} & & |(b)| PB \\
     |(c)| FA & & |(d)| B \times B \\
   };

   \draw[->] (a) to (b);
   \draw[->>] (a) to node[left] {$\we$} (c);
   \draw[->] (b) to (d);
   \draw[->] (c) to node[above] {$\langle \overline{f}, \overline{g} \rangle$} (d);
 \end{ctikzpicture}
 and thus $\overline{f}$ and $\overline{g}$ are homotopic.
\end{proof}

A $1$-simplex $X \colon A \to GB$ is initial in $A \slice \Nf G$ if for all $n > 0$ and any map $Y \colon \Delta[0]\star \partial \Delta[n] \to A \slice \Nf G$ such that $Y|\Delta[0] \star \Delta^{\{0\}} = X$, there exists an extension:
 \begin{ctikzpicture}
   \matrix[diagram]
   {
     |(a)| \Delta[0]\star\partial \Delta [n] & |(c)| A \downarrow \Nf G \\
     |(b)| \Delta[0]\star\Delta [n] &  \\
   };

   \draw[->] (a) to node[above] {$Y$} (c);
   \draw[->,inj] (a) to (b);
   \draw[->,dashed] (b) to node[below right] {$\tY$} (c);
 \end{ctikzpicture}

Before giving the proof of \Cref{prop:unit-is-initial}, we consider the case $n=1$ separately to build the intuition for the general case.

\begin{lemma}\label{lem:unit-for-1}
 Let $X \colon D[1]^\op \to \C$ be the diagram of \Cref{unit}. Then every diagram $Y \colon \Delta[0] \star \partial \Delta[1] \to \C$ such that $Y|\Delta[0] \star \Delta^{\{0\}} = X$ admits an extension:
 \begin{ctikzpicture}
   \matrix[diagram]
   {
     |(a)| \Delta[0]\star\partial \Delta [1] & |(c)| A \downarrow \Nf G \\
     |(b)| \Delta[0]\star\Delta [1] &  \\
   };

   \draw[->] (a) to node[above] {$Y$} (c);
   \draw[->,inj] (a) to (b);
   \draw[->,dashed] (b) to node[below right] {$\tY$} (c);
 \end{ctikzpicture}
\end{lemma}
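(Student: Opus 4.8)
The plan is to unwind what the data of $Y$ amount to, produce the required extension by hand, and then check initiality via the explicit pullback criterion is not needed here — we only need an extension, so this is a direct construction. First I would translate the hypothesis. A map $Y\colon \Delta[0]\star\partial\Delta[1] = \Delta[1] \to A\slice\Nf G$ (note $\Delta[0]\star\Delta^{\{0\}}$ is the initial vertex) is, by the (join $\adjoint$ slice)-adjunction followed by \Cref{thm:fake-adjunction}, the same as a Reedy fibrant homotopical diagram on $D$ of the appropriate simplicial set mapping to $\C$ on the $\C$-coordinate and to $\D$ on the $\D$-coordinate, together with the comparison weak equivalence — concretely, $Y$ records the $0$-simplex $A$, a $0$-simplex $B\colon D[0]^\op\to\C$ (the other vertex in $A\slice\Nf G$, whose image under $\Nf G$ carries the data), a $1$-simplex $X\colon A\to GB$ with $X|D[0] = A$ which is forced to be the unit $1$-simplex of \Cref{unit}, and the data making $B$ an object of $A\slice\Nf G$, i.e.\ a fibrant replacement $w\colon FA \weq B_0$. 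The required filler $\tY\colon \Delta[0]\star\Delta[1]\to A\slice\Nf G$ is then a Reedy fibrant homotopical diagram $\tY\colon D([2])^\op\to\C$ (together with its $\D$-coordinate and comparison) extending the given one on the sub-diagram corresponding to $\Delta[0]\star\partial\Delta[1]$, and with the $0$-coordinate constantly $A$ in the appropriate sense.

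Next I would build the extension. The subcomplex $\Delta[0]\star\partial\Delta[1]\subseteq\Delta[0]\star\Delta[1]=\Delta[2]$ is exactly $\Lambda^0[2]$ with its two outer edges, so on $D$-diagrams we need to extend from $D(\Lambda^0[2])^\op$ to $D[2]^\op$. By \Cref{thm:Sd-D-acyclic} it suffices to define $\tY$ on the extra nondegenerate cells, i.e.\ on $\Sd[2]^\op$, namely to supply the value $\tY_{12}$ (the edge not already present is $\Delta^{\{1,2\}}$) and the top cell $\tY_{012}$, together with a Reedy fibrant fraction filling in. Here is where I use that $X$ is the unit: the vertex $\tY_1$ and $\tY_2$ are both of the form $GF(-)$ applied to $A$, and the relevant edge $A\to GB$ is the one constructed from a path-object factorization in \Cref{unit}. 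The morphism $\overline{\,\cdot\,}$ transpose and \Cref{lem:adjoints-preserve-homotopy} let me upgrade a homotopy in $\C$ to the corresponding homotopy in $\D$, which is what is needed to fill the $\D$-coordinate compatibly with the comparison weak equivalence $w$. Concretely: set $\tY_{12}$ to be (a path object / fraction built from) the data of $Y$ restricted to the edge $\Delta^{\{1,2\}}$ of the input together with the transpose of the unit edge, then factor the appropriate map into a weak equivalence followed by a fibration to obtain the remaining cell of $\Sd[2]$, and finally apply \Cref{lem:extend-reedy} to make the whole diagram on $D(\Lambda^0[2])^\op\cup\Sd[2]^\op$ Reedy fibrant without disturbing the part already fixed, and \Cref{thm:Sd-D-acyclic} again to extend to all of $D[2]^\op$.

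Finally I would check the two compatibility conditions: that $\tY|(\Delta[0]\star\partial\Delta[1]) = Y$ (immediate by construction, since all new cells were added on $\Sd[2]^\op\setminus D(\Lambda^0[2])^\op$ and the fibrant replacement of \Cref{lem:extend-reedy} fixes $D(\Lambda^0[2])^\op$), and that the resulting diagram genuinely lands in $A\slice\Nf G$, i.e.\ the $0$-coordinate is constantly $A$ — this holds because $\Delta[0]\star-$ only ever attaches $A$ as the cone point and every new simplex contains vertex $0$, so its restriction to vertex $0$ is $A$. I expect the main obstacle to be bookkeeping: correctly identifying which cells of $D[2]$ are already determined by $Y$ versus which must be chosen, and verifying that the path-object factorization used to define $\tY_{12}$ is compatible — via \Cref{lem:adjoints-preserve-homotopy} — with the fibrant-replacement comparison data on the $\D$-side, so that the triple $(\tY\text{ in }\C,\ \text{its }\D\text{-image replacement},\ \text{comparison})$ is a legitimate simplex of $A\slice\Nf G$. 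Everything else is a routine application of \Cref{thm:Sd-D-acyclic}, \Cref{lem:extend-reedy}, and \Cref{lem:rel-homot} in the pattern already used in the proof of \Cref{thm:slices}.
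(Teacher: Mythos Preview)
Your proposal has a genuine structural gap: you have misidentified what a simplex of $A \slice \Nf G$ is, and as a result you never address the actual difficulty of the lemma.

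A $k$-simplex of $A \slice \Nf G$ is \emph{not} a triple ``($\C$-diagram, $\D$-image replacement, comparison weak equivalence)'' in the style of the $\C_F$ construction; there is no fibrant replacement $w \colon FA \weq B_0$ in the data. Unwinding the pullback defining $A \slice \Nf G$, the input $Y$ consists of a homotopical Reedy fibrant diagram $Y \colon D\Lambda^0[2]^\op \to \C$ together with a diagram $B \colon D(\partial\N(2])^\op \to \D$ such that $Y$ restricted to the vertices $1,2$ equals $GB$, and the edge $\Delta^{\{0,1\}}$ is the unit $X$. The task is to produce $\tY \colon D[2]^\op \to \C$ \emph{and} $\tB \colon D(2]^\op \to \D$ with $\tY|D(2]^\op = G\tB$. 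The hard constraint is that the new edge $\Delta^{\{1,2\}}$ must literally lie in the image of $\Nf G$; you cannot just build $\tY_{12}$ in $\C$ and fibrantly replace. Your proposal to ``set $\tY_{12}$ to be \ldots\ the data of $Y$ restricted to the edge $\Delta^{\{1,2\}}$'' refers to data that does not exist: that edge is exactly what is missing from $\Lambda^0[2]$.

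What the paper does, and what your plan is missing, is the following idea: first build $\tY_{012}$ in $\C$ as the pullback $X_{01} \times_{X_0} Y_{02}$, then \emph{define} $\tB_{12} := F\tY_{012}$ in $\D$, and use the adjunction unit to obtain the map $\tB_{12} \to B_1 \times B_2$. This guarantees the new edge is $G$ of something. The substantive remaining step---and the real content of the proof---is to verify that the resulting diagram is homotopical, i.e.\ that the projection $f_1 \colon \tB_{12} \to B_1$ is a weak equivalence. This is where \Cref{lem:adjoints-preserve-homotopy} is actually used: one shows $f_1$ is homotopic to $F$ of the acyclic fibration $\tY_{012} \to X_0$ by comparing their adjoint transposes, which in turn relies on the specific factorization chosen in \Cref{unit}. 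Your plan neither constructs $\tB_{12}$ nor identifies this verification as something to be done.
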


We will write $(n]$ for the linearly ordered set $\{1 \leq 2 \leq \ldots \leq n\} \subseteq [n]$. In particular, we have $\N(n] \iso \Delta[n-1]$, but the vertices of $\N(n]$ are numbered with $1$, $2$, \ldots, $n$.

\begin{proof}
 Unwinding the definitions, we see that we are given:
\begin{itemize}
 \item $Y \colon D\Lambda^0[2]^\op \to \C$ with $Y|D(\Delta^{\{0,1\}})^\op = X$;
 \item $B \colon D\partial\N(2]^\op \to \D$ with $B_{1^k} = FX_{0^k}$ and $Y|(D\partial\N(2]^\op) = GB$,
\end{itemize} 
 and we are seeking Reedy fibrant diagrams $\tB \colon D(2]^\op \to \D$ and $\tY \colon D[2]^\op \to \C$ with $\tY|D(2]^\op = G\tB$.
 
 By \Cref{thm:Sd-D-acyclic}, it suffices to find extensions $\tB \colon \Sd(2]^\op \to \D$ and $\tY \colon \Sd[2]^\op \to \C$ with $\tY|\Sd(2]^\op = G\tB$. Explicitly, we need to find:
 \begin{itemize}
  \item $\tB_{12} \in \D$ together with Reedy fibrant fraction $B_1 \overset\we\twoheadleftarrow \tB_{12} \fib B_2$;
  \item $\tY_{012}$ fitting into a Reedy fibrant diagram:
   \[
\begin{tikzpicture}
  \matrix[diagram]
  {
    & & |(b)| X_1 \\
    \\ \\
    |(a)| X_0 & & & & |(c)| GB_2 \\
  };
  \node (ab)  at (barycentric cs:a=1,b=1,c=0) {$X_{01}$};
  \node (bc)  at (barycentric cs:a=0,b=1,c=1) {$G\tB_{12}$};
  \node (ac)  at (barycentric cs:a=1,b=0,c=1) {$Y_{02}$};
  \node (abc) at (barycentric cs:a=1,b=1,c=1) {$\tY_{012}$};
  \draw[->] (ab) to node[above left] {$\we$} (a);
  \draw[->] (ab) to (b);
  \draw[->] (bc) to node[above right] {$\we$} (b);
  \draw[->] (bc) to (c);
  \draw[->] (ac) to node[above] {$\we$} (a);
  \draw[->] (ac) to (c);
  \draw[->] (abc) to node[above] {$\we$} (ab);
  \draw[->] (abc) to node[right] {$\we$} (ac);
  \draw[->] (abc) to (bc);
\end{tikzpicture}
\]
 \end{itemize}
 
Define $\tY_{012}$ as the pullback:
 \begin{ctikzpicture}
   \matrix[diagram]
   {
     |(a)| \tY_{012} & |(b)| X_{01} \\
     |(c)| Y_{02} & |(d)| X_0 \\
   };

   \draw[->] (a) to node[above] {$\pi_1$} (b);
   \draw[->] (a) to node[left] {$\pi_2$} (c);
   \draw[->] (b) to node[right] {$p_0$} (d);
   \draw[->] (c) to (d);
 \end{ctikzpicture}
and let $\tB_{12} := F\tY_{012}$. By the universal property of $\eta$, there exists a unique map $\langle f_1 , f_2 \rangle \colon \tB_{12} \to B_1 \times B_2$ making the following square commute:
 \begin{ctikzpicture}
   \matrix[diagram]
   {
     |(a)| \tY_{012} & & |(b)| G\tB_{12} \\
     |(c)| X_{01} \times Y_{02} & & |(d)| G(B_1 \times B_2) \\
   };

   \draw[->] (a) to node[above] {$\eta_{\tY_{012}}$} (b);
   \draw[->] (a) to node[left] {$\we$} (c);
   \draw[->,dashed] (b) to node[right] {$G\langle f_1, f_2 \rangle$} (d);
   \draw[->] (c) to (d);
 \end{ctikzpicture}
We claim that up to Reedy fibrancy of the resulting diagrams, the objects $\tY_{012}$ and $\tB_{12}$ with the maps defined above give the extension required in the universal property. To see this, it remains to verify that the map $f_1 \colon \tB_{01} \to B_1$ is a weak equivalence. By \Cref{lem:extend-reedy}, we can then fibrantly replace the diagrams, obtaining the desired ones.

The map $p_0\pi_1 \colon \tY_{012} \to X_0$ is weak equivalence as a composite of two weak equivalences, and consequently $F(p_0\pi_1) \colon \tB_{01} \to B_1$ is a weak equivalence. Since a map homotopic to a weak equivalence is itself a weak equivalence, it suffices to show that $f_1$ and $F(p_0\pi_1)$ are homotopic.

By \Cref{lem:adjoints-preserve-homotopy}, it is enough to show that their adjoint transposes $\overline{F(p_0 \pi_1)}, \overline{f_1} \colon \tY_{012} \to GB_1$ are homotopic. We will do so by explicitly computing these transposes.

By naturality of $\eta$, the following diagram commutes:
 \begin{ctikzpicture}
   \matrix[diagram]
   {
     |(a)| \tY_{012} & & |(b)| GF\tY_{012} \\
     |(c)| X_0 & & |(d)| GFX_0 \\
   };

   \draw[->] (a) to node[above] {$\eta_{\tY_{012}}$} (b);
   \draw[->] (a) to node[left] {$p_0\pi_1$} (c);
   \draw[->] (b) to node[right] {$GF(p_0\pi_1)$} (d);
   \draw[->] (c) to node[above] {$\eta_{X_0}$} (d);
 \end{ctikzpicture}
and hence $\overline{F(p_0 \pi_1)} = \eta_{X_0}p_0\pi_1$.

On the other hand, the following diagram commutes by the definition of $\tB_{12}$:
 \begin{ctikzpicture}
   \matrix[diagram]
   {
     |(a)| \tY_{012} & & |(b)| GF\tY_{012} \\
     |(c)| X_{01} \times Y_{02} & & |(d)| GB_1 \times GB_2 \\
     |(e)| X_{01} & & |(f)| GFX_0 \\
   };

   \draw[->] (a) to node[above] {$\eta_{\tY_{012}}$} (b);
   \draw[->] (a) to (c);
   \draw[->] (b) to (d);
   \draw[->] (c) to (d);
   \draw[->] (c) to (e);
   \draw[->] (d) to (f);
   \draw[->] (e) to node[above] {$p_1$} (f);
   \draw[->] (a) to [bend right=70] node[left] {$\pi_1$} (e);
   \draw[->] (b) to [bend left=70] node[right] {$Gf_1$} (f);
 \end{ctikzpicture}

and hence $\overline{f_1} = p_1 \pi_1$.

Thus, we need to show that the maps $\eta_{X_0}p_0\pi_1$ and $p_1 \pi_1$ are homotopic. However, since $\eta_{X_0}p_0 w = p_1 w$ by \Cref{unit} and $w$ is a weak equivalence, we obtain that $\eta_{X_0}p_0$ and $p_1$ are homotopic, and hence $\eta_{X_0}p_0 \pi_1$ and $p_1\pi_1$ must also be homotopic.
\end{proof}

We are now ready to prove the general case.

\begin{proof}[Proof of \Cref{prop:unit-is-initial}]
We are given:
\begin{itemize}
 \item $Y \colon D\Lambda^0[1+n]^\op \to \C$ with $Y|D(\Delta^{\{0,1\}})^\op = X$;
 \item $B \colon D\partial\N(1+n]^\op \to \D$ with $B_{1^k} = FX_{0^k}$ and $Y|(D\partial\N(1+n]^\op) = GB$,
\end{itemize} 
 and we are seeking Reedy fibrant diagrams $\tB \colon D(1+n]^\op \to \D$ and $\tY \colon D[1+n]^\op \to \C$ with $\tY|D(1+n]^\op = G\tB$.
 
 By \Cref{thm:Sd-D-acyclic}, it suffices to find extensions $\tB \colon \Sd(1+n]^\op \to \D$ and $\tY \colon \Sd[1+n]^\op \to \C$ with $\tY|\Sd(1+n]^\op = G\tB$.
 
We start by defining $\tY_{[1+n]}$ as a limit:
 \[ \tY_{[1+n]} = \lim_{0 \in S \subsetneq [1+n]} Y_S.\]
Moreover, we set $\tB_{(1+n]} := F\tY_{[1+n]}$
By construction, the canonical map $\tY_{[1+n]} \to Y_S$ is a fibration for any $S \subsetneq [1+n]$ such that $0 \in S$. By \cite[Lem.\ 2.17]{kapulkin-szumilo}, it is moreover an acyclic fibration.

Given $S \subseteq [n]$ such that $0 \not \in S$, there is a map $Y_{S \cup \{0\}} \to Y_S = GB_S$ yielding:
 \[ \lim_{0\not \in S} Y_{S \cup \{0\}} \longrightarrow \lim_{0 \not \in S} GB_S \iso G(\lim_{0 \not \in S} B_S)\]
 since $G$ is a right adjoint.

By the universal property of $\eta$, there is a unique morphism
 \[ \tB_{(1+n]} \longrightarrow \lim_{0 \not \in S} B_S\]
 making the following square commute (recall that $\tB_{(1+n]} = F\tY_{[1+n]}$):
  \begin{ctikzpicture}
   \matrix[diagram]
   {
     |(a)| \tY_{[1+n]} & & |(b)| GF\tY_{[1+n]} \\
     |(c)| \lim\limits_{0 \not \in S} Y_{S \cup \{0\}} & 
                              & |(d)| G(\lim\limits_{0 \not \in S} B_S) \\
   };

   \draw[->] (a) to node[above] {$\eta_{\tY_{[1+n]}}$} (b);
   \draw[->] (a) to (c);
   \draw[->] (b) to (d);
   \draw[->] (c) to (d);
 \end{ctikzpicture}
 We claim that $\tY_{[1+n]}$ and $\tB_{(1+n]}$ together with the map constructed above give the desired extension. For that, it remains to verify that the canonical map $\tB_{(1+n]} \to B_{1}$ is a weak equivalence, but the proof of this fact follows now verbatim the proof of \Cref{lem:unit-for-1}.
\end{proof}

\begin{proof}[Proof of \Cref{thm:Nf-preserves-adjoints}]
 By \Cref{prop:unit-is-initial,lem:adjoint-characterization}, $\Nf G$ is a right adjoint and by \Cref{unit} its left adjoint is given by $\Nf F$.
\end{proof}

\section{Quasicategories arising from type theory are locally cartesian closed}
\label{sec:the-proof}

In this section, we complete the proof of \Cref{thm:main-first-statement}. We begin by introducing the notion of a locally cartesian closed fibration category (\Cref{def:lcc-fib-cat}) and, using the results of the previous sections, show that the quasicategory of frames of such a category is locally cartesian closed (\Cref{thm:lccfib-gives-lccqcat}). We then verify that every contextual category carries the structure of  a locally cartesian closed fibration category, which lets us deduce our main theorem (\Cref{thm:main-second-statement}).

\begin{definition} \label{def:lcc-fib-cat}
 A \textbf{locally cartesian closed fibration category} is a fibration category $\C$ such that for any fibration $p \colon A \to B$, the pullback functor $p^* \colon \C\fibslice B \to \C \fibslice A$ admits a right adjoint which is an exact functor.
\end{definition}

We will show that if $\C$ is a locally cartesian closed fibration category, then $\Nf\C$ is a locally cartesian closed quasicategory. The following lemma contains the technical heart of the proof.

\begin{lemma}\label{lem:comm-diagram}
 Let $\C$ be a fibration category and $X \colon A \to B$ a $1$-simplex in $\Nf\C$ (thus, in particular, $X_0 = A_0$ and $X_1 = B_0$). Let $f$ denote the acyclic fibration $X_{01} \to A_0$ and by $g$ the fibration $X_{01} \to B_0$. Then the diagram:
\begin{ctikzpicture}
   \matrix[diagram]
   {
     |(a)| \Nf\C\slice B & & |(b)| \Nf(\C\fibslice B_0) \\
     |(c)|  \Nf\C\slice A & |(c')|\Nf(\C\fibslice A_0) & 
                                           |(d)| \Nf(\C\fibslice X_{01}) \\
   };

   \draw[->] (a) to node[above] {$\S_B$} (b);
   \draw[->] (a) to node[left] {$X^*$} (c);
   \draw[->] (b) to node[right] {$\Nf g^*$} (d);
   \draw[->] (c) to node[above] {$\S_A$} (c');
   \draw[->] (c') to node[above] {$\Nf f^*$} (d);
 \end{ctikzpicture}
 commutes up to $E[1]$-homotopy.
\end{lemma}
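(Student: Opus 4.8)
The plan is to trace an $n$-simplex around both legs of the square and exhibit an explicit $E[1]$-homotopy between the two results, using \Cref{lem:rel-homot} as the final step. An $n$-simplex of $\Nf\C \slice B$ is a homotopical, Reedy fibrant diagram $P \colon D[n+1]^\op \to \C$ with $P_{(n+1)^k} = B_{0^k}$. Going right then down: $\S_B$ restricts along $D_a[n] \into D[n+1]$ to the diagram $i \mapsto P_{\bullet\,(n+1)}$ over $B_0$, and then $\Nf g^*$ pulls back each object in this augmented diagram along $g \colon X_{01} \to B_0$ (pointwise), followed by a Reedy fibrant replacement (as in \Cref{rmk:Nf-on-homotopical}, since $g^*$ is exact here the replacement is only needed to land back in $\Nf$). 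Going down then right: $X^*$ is built via the section of the acyclic fibration $\C^{(\Delta[1]\times\Delta[1])_X}_{\mathrm{univ}} \to \Nf\C\slice B$ described before \Cref{def:lcc-fib-cat}; concretely, by the characterization of pullbacks in $\Nf\C$ (\Cref{ex:pullback-in-NfC}), $X^* P$ is (up to homotopy) the diagram over $A_0$ whose objects are homotopy pullbacks of $P_\bullet \to B_0 \leftarrow X_{01}$, i.e.\ weakly equivalent to $X_{01} \times_{B_0} P_\bullet$ computed with one leg a fibration. Then $\S_A$ restricts and $\Nf f^*$ pulls back along the acyclic fibration $f \colon X_{01} \to A_0$.

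The key observation is that both composites, evaluated on an $n$-simplex $P$, are canonically weakly equivalent to the diagram $D_a[n]^\op \to \C$ sending $i \mapsto X_{01} \times_{B_0} P_{\bullet\,(n+1)}$, suitably Reedy-fibrantly replaced. For the right-then-down leg this is immediate since $g^*$ literally computes $X_{01}\times_{B_0}(-)$. For the down-then-right leg, one uses that $\Nf f^*$ precomposes with pullback along the acyclic fibration $f$; by \Cref{prop:pb-along-we} this is a weak equivalence, and the composite pullback $f^*\!\circ(X^*P|_{\bullet\,(n+1)})$ is, again by \Cref{ex:pullback-in-NfC}, the iterated pullback $X_{01}\times_{A_0}(X_{01}\times_{B_0}P_\bullet)$, which maps by an acyclic fibration (the diagonal-free leg, using that $f$ is acyclic) to $X_{01}\times_{B_0}P_\bullet$ over $X_{01}$. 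These comparison maps assemble into a natural weak equivalence on the restrictions to $(\Sd[\,\cdot\,])^\op$, which is the identity over the relevant augmentation object $X_{01}$ (the value at $\varnothing$), and hence over $\partial\Delta[n]$ by naturality/induction. \Cref{lem:rel-homot} then upgrades this natural weak equivalence to an $E[1]$-homotopy of the two $n$-simplices, naturally in $n$, giving the $E[1]$-homotopy of simplicial maps.

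I expect the main obstacle to be bookkeeping: making the two sections (the one defining $X^*$ via $\C^{(\Delta[1]\times\Delta[1])_X}_{\mathrm{univ}}$ and the one implicit in $\Nf f^*$, $\Nf g^*$ via \Cref{rmk:Nf-on-homotopical}) compatible enough that the comparison weak equivalences are genuinely natural in the simplicial degree $n$ and genuinely the identity on the augmentation object, so that \Cref{lem:rel-homot} applies verbatim. The conceptual content—that pulling back along $g$ (the $B_0$-leg of the $1$-simplex $X$) after slicing at $B$ agrees, up to coherent weak equivalence, with first pulling back the slice along $X$ and then discarding the acyclic-fibration leg $f$—follows from the pasting law for homotopy pullbacks packaged in \Cref{ex:pullback-in-NfC} together with \Cref{prop:pb-along-we}; once the comparison transformation is written down, checking it is a pointwise weak equivalence is routine (two-out-of-three applied to composites of acyclic fibrations and pullbacks thereof).
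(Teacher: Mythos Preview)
Your approach differs from the paper's, and the difference is not merely cosmetic. You try to compute both composites on an $n$-simplex and then invoke \Cref{lem:rel-homot}; the paper instead inverts the bottom row (using that $\S_A$ and $\Nf f^*$ are equivalences) to form $F := \overline{\S}_A \circ \Nf f_! \circ \Nf g^* \circ \S_B \colon \Nf\C\slice B \to \Nf\C\slice A$, and then proves that $F$ is itself a valid choice of the pullback functor $X^*$. Concretely, for each $0$-simplex $Y$ of $\Nf\C\slice B$ the paper builds an explicit square $[FY, X, Y]$ on $(D\sqcup)^\op$ and extends it to a diagram on $D([1]\times[1])^\op$ satisfying the pullback criterion of \Cref{ex:pullback-in-NfC}; this exhibits $F$ as $\mathrm{pr}_A$ composed with a section of the acyclic fibration $(\Nf\C)^{(\Delta[1]\times\Delta[1])_X}_{\mathrm{univ}} \to \Nf\C\slice B$, hence $E[1]$-homotopic to any other such choice $X^*$.

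There is a genuine gap in your plan at the point where you write that ``$X^* P$ is (up to homotopy) the diagram over $A_0$ whose objects are homotopy pullbacks of $P_\bullet \to B_0 \leftarrow X_{01}$''. The functor $X^*$ is defined only as projection from an \emph{unspecified} section of an acyclic fibration; \Cref{ex:pullback-in-NfC} tells you that the resulting square is a homotopy pullback, which pins down $X^*P$ up to weak equivalence as a \emph{single object} of $\Nf\C\slice A$, but it gives no handle on the value of $X^*$ on higher simplices of $\Nf\C\slice B$, and it is precisely that higher coherence you need in order to apply \Cref{lem:rel-homot} degreewise and obtain an $E[1]$-homotopy of \emph{simplicial maps}. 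You correctly flag this as ``bookkeeping'', but it is more than bookkeeping: without an explicit model for $X^*$ there is no natural transformation to write down, and producing such a model amounts exactly to the paper's strategy of exhibiting a specific section whose associated pullback functor is $F$. In short, your argument would become correct only after importing the paper's construction as its first step.
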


We will write $\sqcup$ for the simplicial set with four $0$-simplices: $(0,0)$, $(0,1)$, $(1,0)$, and $(1,1)$; and three non-degenerate $1$-simplices: $(0,0) \to (0,1)$, $(0,1) \to (1,1)$, and $(1,0) \to (1,1)$.

\begin{proof}
 The map $\Nf f^*$ is a categorical equivalence between quasicategories and thus admits a homotopy inverse, given by $\Nf f_!$ (which is obtained by choosing a section of the projection $(\C \fibslice A_0)_{f_!} \to (\C \fibslice A_0)$ as in \Cref{rmk:Nf-on-homotopical}). By \Cref{thm:slices}, the functor $\S_A$ is a categorical equivalence and thus admits a homotopy inverse $\overline{\S}_A$.

 Let $F = \overline{\S}_A (\Nf f_!) (\Nf g^*) \S_B$. Given any $Y \in \Nf\C \slice B$, we will define a diagram $[FY, X, Y] \colon (D \sqcup)^\op \to \C$ as follows: since $D$ preserves colimits \cite[Lem.\ 3.6]{szumilo:two-models}, it suffices to give three compatible diagrams $D[1]^\op \to \C$, namely:
 \begin{ctikzpicture}
   \matrix[diagram]
   {
     |(a)| (0,0) & |(b)| (1,0) \\
     |(c)| (0,1) & |(d)| (1,1) \\
   };

   \draw[->] (a) to node[left] {$FY$} (c);
   \draw[->] (b) to node[right] {$Y$} (d);
   \draw[->] (c) to node[above] {$X$} (d);
 \end{ctikzpicture}
  By the universal property of the pullback in the quasicategory $\Nf\C$, it suffices to show that $[FY, X, Y]$ can be extended to a pullback square in $\Nf\C$, i.e.\ a diagram $D([1] \times [1])^\op \to \C$ satisfying the condition of \Cref{ex:pullback-in-NfC}. Indeed, such an extension defines a simplicial map $(\Nf\C \slice B)_0 \to (\Nf\C)^{(\Delta[1] \times \Delta[1])_X}_{\mathrm{univ}}$, where $(\Nf\C \slice B)_0$ denotes the $0$-skeleton of $\Nf\C \slice B$, fitting into a commutative square:
   \begin{ctikzpicture}
   \matrix[diagram]
   {
     |(a)|(\Nf\C \slice B)_0 & |(b)| (\Nf\C)^{(\Delta[1] \times \Delta[1])_X}_{\mathrm{univ}} & |(x)| \Nf\C\slice A \\
     |(c)| \Nf\C \slice B   & |(d)| \Nf\C \slice B &  \\
   };
   \draw[->] (a) to (b);
   \draw[->,inj] (a) to (c);
   \draw[->>] (b) to node[right] {$\we$} (d);
   \draw[->] (c) to node[below] {$=$} (d);
   \draw[->,dashed] (c) to (b);
   \draw[->] (b) to (x);
 \end{ctikzpicture}
 which establishes $F$ as a particular choice of the pullback functor $\Nf\C \slice B \to \Nf\C\slice A$.
 
 Let $Q = g^*Y_{01}$. Factor the composite $Q \to X_{01} \overset{f}{\to} A_0$ into a weak equivalence followed by a fibration: $Q \overset{\sim}{\to} Q_1 \fib A_0$ (i.e.\ $Q_1$ should be thought of as the value of $(\Nf f_!) (\Nf g^*) \S_B$ on $Y_{01}$). Since $\S_A$ and $\overline{\S}_A$ are homotopy inverses, we obtain a fraction $FY_{01} \overset{\sim}{\twoheadleftarrow} Q_2 \overset{\sim}{\fib} Q_1$ in $\C \fibslice A_0$. Pulling back the diagram:
 \begin{ctikzpicture}
   \matrix[diagram]
   {
     |(a)| FY_{01} & |(b)| Q_2 & |(c)| Q_1 \\
                   & |(d)| A_0 &  \\
   };

   \draw[->>] (b) to node[above] {$\we$} (a);
   \draw[->>] (b) to node[above] {$\we$} (c);
   \draw[->>] (a) to (d);
   \draw[->>] (b) to (d);
   \draw[->>] (c) to (d);
 \end{ctikzpicture}
 along $f \colon X_{01} \to A_0$, we obtain a zigzag 
 \begin{ctikzpicture}
   \matrix[diagram]
   {
     |(a)| f^*FY_{01} & |(b)| f^*Q_2 & |(c)| f^*Q_1 & |(d)| Q \\
   };

   \draw[->>] (b) to node[above] {$\we$} (a);
   \draw[->>] (b) to node[above] {$\we$} (c);
   \draw[->] (d) to node[above] {$\we$} (c);
 \end{ctikzpicture}
 in $\C\fibslice X_{01}$. Define $Q' = f^*Q_2 \times_{f^*Q_1} Q$. Then:
 \begin{center}
 \begin{ctikzpicture}
    \matrix[diagram,column sep=10em,row sep=10em]
    {
      |(a-a)| FY_1 & |(b-a)| Y_1 \\
      |(a-b)| A_0 & |(b-b)| B_0 \\
    };

    \node (aa-ab) at (barycentric cs:a-a=1,a-b=1,b-b=0) {$FY_{01}$};
    \node (ab-bb) at (barycentric cs:a-a=0,a-b=1,b-b=1) {$X_{01}$};
    \node (ab-ab) at (barycentric cs:a-a=1,a-b=0,b-b=1) {$Q'$};

    \node (ab-aa) at (barycentric cs:a-a=1,b-a=1,b-b=0) {$Q'$};
    \node (bb-ab) at (barycentric cs:a-a=0,b-a=1,b-b=1) {$Y_{01}$};

    \node (aab-abb) at (barycentric cs:a-a=2,a-b=3,b-b=2) {$Q'$};
    \node (abb-aab) at (barycentric cs:a-a=2,b-a=3,b-b=2) {$Q'$};

    \draw[->]  (aa-ab) to node[left] {$\we$} (a-a);
    \draw[->] (aa-ab) to (a-b);

    \draw[->] (ab-bb) to node[below] {$\we$} node[above] {$f$} (a-b);
    \draw[->] (ab-bb) to node[above] {$g$} (b-b);

    \draw[->] (ab-aa) to node[above] {$\we$} (a-a);
    \draw[->] (ab-aa) to (b-a);

    \draw[->] (bb-ab) to node[right] {$\we$} (b-a);
    \draw[->] (bb-ab) to (b-b);

    \draw[->] (ab-ab) to node[above right] {$\we$} (a-a);
    \draw[->,dashed] (ab-ab) to (b-b);

    \draw[->] (aab-abb) to node[above right] {$\we$} (aa-ab);
    \draw[->,dashed] (aab-abb) to (ab-bb);
    \draw[->] (aab-abb) to node[below right]  {$\we$} (ab-ab);

    \draw[->] (abb-aab) to node[below left] {$\we$} (ab-aa);
    \draw[->,dashed] (abb-aab) to  (bb-ab);
    \draw[->] (abb-aab) to node[below right] {$\we$} (ab-ab);
  \end{ctikzpicture}
 \end{center} 
 defines a homotopical diagram $ P \colon \Sd([1] \times [1])^\op \to \C$ whose restriction to $(\Sd \sqcup)^\op$ is $[FY, X, Y]$. Moreover, by construction, the canonical map induced by the dashed arrows $Q' = Q' \times_{Q'} Q' \to X_{01} \times_{B_0} Y_{01} = Q$ is a weak equivalence.
 
 By \Cref{lem:extend-reedy}, we may find a Reedy fibrant diagram $\widetilde{P} \colon \Sd([1] \times [1])^\op \to \C$ with a weak equivalence $P \overset\we\to \widetilde{P}$ such that $P|(\Sd \sqcup)^\op = [FY, X, Y]$. Finally, by \Cref{thm:Sd-D-acyclic}, one can extend $\widetilde{P}$ to a diagram $D([1] \times [1])^\op \to \C$ satisfying the pullback condition of \Cref{ex:pullback-in-NfC}, thus completing the proof. 
\end{proof}

\begin{theorem}\label{thm:lccfib-gives-lccqcat}
 Let $\C$ be a locally cartesian closed fibration category. Then the quasicategory $\Nf\C$ is locally cartesian closed.
\end{theorem}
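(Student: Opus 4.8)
The plan is to use \Cref{lem:comm-diagram} to present each pullback functor of $\Nf\C$, up to $E[1]$-homotopy, as a composite of functors each of which is either a categorical equivalence or has a right adjoint supplied by \Cref{thm:Nf-preserves-adjoints}. First, $\Nf\C$ already has finite limits by \Cref{thm:szumilo-main}, so it suffices to show that for every $1$-simplex $X \colon A \to B$ in $\Nf\C$ the pullback functor $X^* \colon \Nf\C \slice B \to \Nf\C \slice A$ admits a right adjoint. Fix such an $X$, and let $f \colon X_{01} \to A_0$ be the acyclic fibration and $g \colon X_{01} \to B_0$ the fibration of \Cref{lem:comm-diagram}.

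Next I would record a right adjoint for each of the five functors appearing in the square of \Cref{lem:comm-diagram}. By \Cref{thm:slices}, the functors $\S_A$ and $\S_B$ are categorical equivalences; promoting each to an adjoint equivalence (a standard $2$-categorical fact), with homotopy inverses $\overline{\S}_A$ and $\overline{\S}_B$, makes each of $\S_A, \overline{\S}_A, \S_B, \overline{\S}_B$ simultaneously a left and a right adjoint in $\qCat_2$. Since $f$ is an acyclic fibration, the unit and counit of the adjunction $f_! \adjoint f^*$ are natural weak equivalences (see the proof of \Cref{prop:pb-along-we}), so by \Cref{rmk:Nf-on-homotopical} the maps $\Nf f^*$ and $\Nf f_!$ are homotopy inverses; in particular $\Nf f_! \adjoint \Nf f^*$ is an adjunction of quasicategories. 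Finally, since $g$ is a fibration and $\C$ is locally cartesian closed, the pullback functor $g^* \colon \C \fibslice B_0 \to \C \fibslice X_{01}$ has an exact right adjoint $g_*$, so $\Nf g^* \adjoint \Nf g_*$ is an adjunction of quasicategories by \Cref{thm:Nf-preserves-adjoints}.

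Then I would rearrange the $E[1]$-commutative square of \Cref{lem:comm-diagram}, which reads $\Nf f^* \circ \S_A \circ X^* \we_{E[1]} \Nf g^* \circ \S_B$: composing on the left with $\Nf f_!$, then with $\overline{\S}_A$, and cancelling $\Nf f_! \Nf f^* \we_{E[1]} 1$ and $\overline{\S}_A \S_A \we_{E[1]} 1$ gives
\[ X^* \ \we_{E[1]}\ \overline{\S}_A \circ \Nf f_! \circ \Nf g^* \circ \S_B . \]
The right-hand side is a composite of left adjoints (applied in the order $\S_B$, then $\Nf g^*$, then $\Nf f_!$, then $\overline{\S}_A$), so it has a right adjoint, namely the composite of the corresponding right adjoints in the opposite order,
\[ \overline{\S}_B \circ \Nf g_* \circ \Nf f^* \circ \S_A \colon \Nf\C \slice A \longrightarrow \Nf\C \slice B . \]
Since $E[1]$-homotopic maps of quasicategories are isomorphic in $\qCat_2$ and possession of a right adjoint is invariant under isomorphism in $\qCat_2$, the functor $X^*$ itself admits a right adjoint, and therefore $\Nf\C$ is locally cartesian closed.

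The hard inputs, \Cref{thm:Nf-preserves-adjoints} and \Cref{lem:comm-diagram}, are already in place, so the only real care needed is the $2$-categorical bookkeeping: verifying that each categorical equivalence can be upgraded to an adjoint equivalence (hence counts as a right adjoint where needed), tracking the order in which the left adjoints are composed so that the right adjoints are assembled in the correct order, and confirming that adjointness in $\qCat_2$ is stable under replacing a $1$-cell by an $E[1]$-homotopic one. These points are routine, but they are where a variance slip would be most likely.
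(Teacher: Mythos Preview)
Your proof is correct and follows essentially the same approach as the paper: both use \Cref{thm:szumilo-main} for finite limits, then invoke the $E[1]$-commutative square of \Cref{lem:comm-diagram}, observe that the horizontal functors are categorical equivalences (via \Cref{thm:slices} and \Cref{prop:pb-along-we}/\Cref{rmk:Nf-on-homotopical}) and that $\Nf g^*$ has a right adjoint by \Cref{thm:Nf-preserves-adjoints}, and conclude that $X^*$ does as well. You have simply spelled out the $2$-categorical bookkeeping (promoting equivalences to adjoint equivalences, composing adjoints, and transferring along $E[1]$-homotopy) that the paper leaves to the reader.
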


\begin{proof}
 By \Cref{thm:szumilo-main}, $\Nf\C$ has finite limits, so we only need to show that for any $1$-simplex $X \colon A \to B$ in $\Nf\C$, the pullback functor $X^*$ has a right adjoint. However, in the diagram of \Cref{lem:comm-diagram}, the horizontal maps are equivalences by \Cref{thm:slices,prop:pb-along-we} and the right vertical map admits a right adjoint by assumption and \Cref{thm:Nf-preserves-adjoints}.
\end{proof}

Next, we shall verify that the fibration categories given by categorical models of type theory are locally cartesian closed in the sense of \Cref{def:lcc-fib-cat}.

\begin{proposition}\label{prop:ClT-is-lccfib}
 If $\mC$ is a categorical model of type theory, then its underlying fibration category of \Cref{thm:akl} is locally cartesian closed in the sense of \Cref{def:lcc-fib-cat}.
\end{proposition}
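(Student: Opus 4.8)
The plan is to use the $\mathsf{\Pi}$-structure on $\mC$ to produce the right adjoints required by \Cref{def:lcc-fib-cat}, with Functional Extensionality accounting for their exactness. First I would reduce to a convenient case. Since the fibration category structure on $\mC$ is, by \Cref{thm:akl} and the remark following it, transported from that of $\mC^\cxt$ along the canonical equivalence of underlying categories, and since $\mC^\cxt$ carries a lifted model structure making it again a categorical model of type theory \cite[Sec.\ 1.3.1]{lumsdaine:thesis}, the property of being a locally cartesian closed fibration category is carried between $\mC$ and $\mC^\cxt$; so it suffices to treat $\mC^\cxt$. In $\mC^\cxt$ every fibration is isomorphic to a basic dependent projection, and ``$p^*$ admits an exact right adjoint'' depends on $p$ only up to isomorphism, so we may assume $p = p_{\Gamma.A}\colon \Gamma.A \to \Gamma$ is a basic dependent projection. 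Denote the functor to be constructed by $\mathsf{\Pi}_A \colon \mC^\cxt \fibslice (\Gamma.A) \to \mC^\cxt \fibslice \Gamma$.

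To build $\mathsf{\Pi}_A$ it suffices to show that, for every fibration $E \fib \Gamma.A$, the functor on $\mC^\cxt \fibslice \Gamma$ sending $D$ to the set of morphisms $p^*D \to E$ in $\mC^\cxt \fibslice (\Gamma.A)$ is representable; a choice of representing object for each $E$ then assembles into a right adjoint $\mathsf{\Pi}_A$ to $p^*$. When $E$ is a basic dependent projection $(\Gamma.A).C \to \Gamma.A$, the $\mathsf{\Pi}$-structure supplies the object $\Gamma.\mathsf{\Pi}(A,C) \to \Gamma$ of $\mC^\cxt \fibslice \Gamma$, and the $\mathsf{\Pi}$-introduction, -elimination, and -computation rules, together with the $\mathsf{\Pi}$-$\eta$ rule, furnish a bijection between morphisms $p^*D \to E$ over $\Gamma.A$ and morphisms $D \to \Gamma.\mathsf{\Pi}(A,C)$ over $\Gamma$, natural in $D$ by stability of these operations under substitution; this exhibits $\Gamma.\mathsf{\Pi}(A,C)$ as a representing object. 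An arbitrary fibration over $\Gamma.A$ is isomorphic to such a basic dependent projection, and is therefore represented as well. (That a $\mathsf{\Pi}$-structure satisfying $\eta$ presents a right adjoint to pullback along a dependent projection is standard; see \cite[Def.\ B.1.1]{kapulkin-lumsdaine-voevodsky:simplicial-model} for the relevant rules in the setting of contextual categories.)

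It remains to check that $\mathsf{\Pi}_A$ is exact. Being a right adjoint, it preserves every limit that exists in $\mC^\cxt \fibslice (\Gamma.A)$; in particular it preserves the terminal object and pullbacks along fibrations, the latter existing by F3 and being genuine limits in the slice since they are computed in $\mC^\cxt$. It preserves fibrations: a fibration $E \fib \Gamma.A$ is isomorphic to a basic dependent projection $(\Gamma.A).C \to \Gamma.A$, so $\mathsf{\Pi}_A E$ is isomorphic to the basic dependent projection $\Gamma.\mathsf{\Pi}(A,C) \to \Gamma$. Finally, $\mathsf{\Pi}_A$ preserves acyclic fibrations, and it is here that Functional Extensionality is used: a weak equivalence between fibrations over $\Gamma.A$ is a fibrewise equivalence, and funext implies that $\mathsf{\Pi}_A$ carries such a map to a weak equivalence over $\Gamma$ (equivalences are closed under $\mathsf{\Pi}$; cf.\ \cite[Def.\ B.3.1]{kapulkin-lumsdaine-voevodsky:simplicial-model} and \cite[Sec.\ 4.9]{hott:book}). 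Together with preservation of fibrations, this gives preservation of acyclic fibrations. Hence $\mathsf{\Pi}_A$ is an exact right adjoint to $p^*$, which is what was to be shown.

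I expect the main obstacle to be the construction step. The formal skeleton is short — preservation of limits by a right adjoint is automatic, and preservation of weak equivalences is a direct appeal to Functional Extensionality — but verifying in full that the $\mathsf{\Pi}$-rules together with $\beta$ and $\eta$ really organize into the asserted natural bijection of hom-sets requires careful bookkeeping with substitution and with the up-to-isomorphism passage between fibrations and dependent projections in $\mC^\cxt$.
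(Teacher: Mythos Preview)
Your approach matches the paper's: build the right adjoint from the $\mathsf{\Pi}$-structure (this is the paper's \Cref{lem:right-adj-in-ClT}), then check exactness using that right adjoints preserve limits and that Functional Extensionality handles weak equivalences. However, there is a genuine gap in your treatment of preservation of fibrations.

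You argue: ``a fibration $E \fib \Gamma.A$ is isomorphic to a basic dependent projection $(\Gamma.A).C \to \Gamma.A$, so $\mathsf{\Pi}_A E$ is isomorphic to the basic dependent projection $\Gamma.\mathsf{\Pi}(A,C) \to \Gamma$.'' This only shows that the \emph{structure map} of $\mathsf{\Pi}_A E$ in the slice over $\Gamma$ is a fibration, which is automatic since $\mathsf{\Pi}_A$ lands in $\mC^\cxt \fibslice \Gamma$ by construction. What exactness actually demands is that $\mathsf{\Pi}_A$ send \emph{morphisms} of $\mC^\cxt \fibslice (\Gamma.A)$ that are fibrations to fibrations: given a dependent projection $(\Gamma.A.C.D) \to (\Gamma.A.C)$ regarded as a fibration in the slice over $\Gamma.A$, you must show that the induced map $\Gamma.\mathsf{\Pi}(A, C.D) \to \Gamma.\mathsf{\Pi}(A,C)$ is again (isomorphic to) a dependent projection. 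The paper isolates precisely this as a separate lemma (\Cref{lem:def-ac}), identifying $\Gamma.\mathsf{\Pi}(A, C.D)$ with $\Gamma.\mathsf{\Pi}(A,C).\mathsf{\Pi}(p_{\mathsf{\Pi}(A,C)}^*A, \mathsf{app}^*D)$, so that the map down to $\Gamma.\mathsf{\Pi}(A,C)$ is visibly a dependent projection. This dependent currying identification is the substantive step you have skipped; once you add it, your argument goes through and coincides with the paper's.
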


The proof of this proposition will be preceded by two lemmas. We begin by introducing some notation. Let $\Gamma$ be an object of a categorical model of type theory $\C$ and consider two context extensions $\Gamma.\Delta$ and $\Gamma.\Theta$. There is an evident map:
\[\mathsf{exch}_{\Delta,\Theta} \colon \Gamma.\Delta.p_{\Delta}^*\Theta \to \Gamma.\Theta.p_\Theta^*\Delta \]
given by the universal property of a pullback.

\begin{lemma}\label{lem:right-adj-in-ClT}
 For any dependent projection $p_\Delta \colon (\Gamma.\Delta) \to \Gamma$ in a categorical model of type theory $\mC$, the pullback functor $p_\Delta^* \colon \mC\fibslice \Gamma \to \mC \fibslice (\Gamma.\Delta)$ admits a right adjoint.
\end{lemma}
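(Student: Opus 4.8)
The plan is to construct the right adjoint to $p_\Delta^*$ directly from the $\mathsf{\Pi}$-structure on $\mC$, exploiting the fact that we are allowed to work in $\mC^\cxt$, where every dependent projection is isomorphic to a basic one and iterated context extensions are available. Thus it suffices to define, for each fibration $B \fib \Gamma.\Delta$ in $\mC\fibslice(\Gamma.\Delta)$, an object of $\mC\fibslice\Gamma$ that will serve as $\prod_{p_\Delta} B$, and to check the adjunction property. Since $B \fib \Gamma.\Delta$ is (isomorphic to) a dependent projection, we may write $B = \Gamma.\Delta.\Xi$ for some context, and then set
\[
  \textstyle\prod_{p_\Delta} B \;:=\; \Gamma.\mathsf{\Pi}(\Delta,\Xi),
\]
which is a context extension of $\Gamma$, hence an object of $\mC\fibslice\Gamma$. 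The first step is therefore to make this assignment precise and functorial: a map over $\Gamma.\Delta$ between two such fibrations induces, via the functoriality of the $\mathsf{\Pi}$-structure lifted to $\mC^\cxt$ (using \cite[Sec.\ 1.3.1]{lumsdaine:thesis}), a map over $\Gamma$ between the corresponding $\mathsf{\Pi}$-objects.

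Next I would establish the adjunction $p_\Delta^* \adjoint \prod_{p_\Delta}$ by exhibiting the unit and counit (or equivalently a natural bijection on hom-sets). Concretely, given $C \fib \Gamma$, say $C = \Gamma.\Lambda$, a map $\Gamma.\Delta.p_\Delta^*\Lambda \to \Gamma.\Delta.\Xi$ over $\Gamma.\Delta$ should correspond naturally to a map $\Gamma.\Lambda \to \Gamma.\mathsf{\Pi}(\Delta,\Xi)$ over $\Gamma$. This is exactly the currying/uncurrying bijection supplied by the introduction and elimination rules for $\mathsf{\Pi}$-types (together with the $\mathsf{\Pi}$-$\beta$ and $\mathsf{\Pi}$-$\eta$ rules, which are part of the data of a categorical model of type theory here), after using the exchange isomorphism $\mathsf{exch}_{\Delta,\Lambda}$ to rearrange the order of context extensions so that the hypotheses match the shape demanded by the $\mathsf{\Pi}$-rules. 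I would check naturality in $C$ (and in $B$) by appealing to the stability of the $\mathsf{\Pi}$-structure under substitution, which is again part of the definition. The two lemmas preceding \Cref{prop:ClT-is-lccfib} — one of which is this statement, the other presumably recording that the right adjoint is moreover exact — then combine to give the proposition; here I would only be responsible for the existence of the adjoint.

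The main obstacle I anticipate is bookkeeping rather than conceptual: the $\mathsf{\Pi}$-structure as axiomatized in \cite[Def.\ B.1.1]{kapulkin-lumsdaine-voevodsky:simplicial-model} is phrased for a dependent type in a context extended once, so to handle a general dependent projection $p_\Delta$ (a composite of basic ones) one must either induct on the length of $\Delta$ or pass to $\mC^\cxt$ and use that $\mathsf{\Pi}(\Delta,\Xi)$ is well-defined there for iterated extensions — and then carefully track the identifications $(fg)^*X = g^*f^*X$ and the coherence of the chosen pullbacks so that $p_\Delta^*$ is literally the functor appearing in the statement. The exchange map $\mathsf{exch}_{\Delta,\Theta}$ and its compatibilities are the technical glue, and verifying that the hom-set bijection is natural on both sides will require unwinding how substitution interacts with $\mathsf{\Pi}$-introduction and -elimination. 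None of this is deep, but it is the part where care is needed; the $\eta$-rule is essential for the bijection to be a genuine inverse pair rather than merely a retraction.
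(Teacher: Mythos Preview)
Your proposal is correct and follows essentially the same approach as the paper: define the right adjoint by $(p_\Delta)_*(\Gamma.\Delta.\Theta) := \Gamma.\mathsf{\Pi}(\Delta,\Theta)$, use the exchange map together with $\mathsf{app}$ to obtain the counit, and invoke $\mathsf{\Pi}$-$\eta$ for the universal property. The paper's proof is terser (it records only the counit and appeals to $\mathsf{\Pi}$-$\eta$), but your more explicit description of the hom-set bijection and the bookkeeping via $\mC^{\cxt}$ is exactly what underlies that one-line appeal.
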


\begin{proof}
 Let $\Gamma.\Delta.\Theta \in \mC\fibslice (\Gamma.\Delta)$ and define $(p_\Delta)_* (\Gamma.\Delta.\Theta) := \Gamma.\mathsf{\Pi}(\Delta,\Theta)$. The counit
 \[\varepsilon_{\Gamma.\Delta.\Theta} \colon \Gamma.\Delta.p_\Delta^*\mathsf{\Pi}(\Delta, \Theta) \to \Gamma.\Delta.\Theta \]
 is given by the composite of the $\mathsf{exch}_{\Delta, \mathsf{\Pi}(\Delta,\Theta)}$ followed by the map $\mathsf{app}_{\Delta,\Theta}$ of \cite[Def.\ B.1.1]{kapulkin-lumsdaine-voevodsky:simplicial-model}. The universal property follows by $\mathsf{\Pi}$-$\eta$.
\end{proof}

\begin{lemma}\label{lem:def-ac}
 Consider an iterated context extension $\Gamma.\Delta.\Theta.\Psi$ in a categorical model of type theory $\mC$. Then the contexts:
 \[ \Gamma.\mathsf{\Pi}(\Delta, \Theta).\mathsf{\Pi}(p_{\mathsf{\Pi}(\Delta,\Theta)}^* \Delta.\mathsf{app}_{\Delta, \Theta}^* \Psi) \quad \text{ and } \quad \Gamma.\mathsf{\Pi}(\Delta, \Theta.\Psi) \]
 are isomorphic (and, in fact, equal) in $\mC$.
\end{lemma}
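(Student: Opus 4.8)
This is fundamentally a bookkeeping lemma about how $\mathsf{\Pi}$-types interact with iterated $\mathsf{\Sigma}$-like context extensions, so the strategy is to trace both sides through the defining data of the $\mathsf{\Pi}$-structure (\cite[Def.\ B.1.1]{kapulkin-lumsdaine-voevodsky:simplicial-model}) and check that they coincide on the nose. Recall that the $\mathsf{\Pi}$-structure on $\mC$ (equivalently $\mC^\cxt$, after lifting as in \cite[Sec.\ 1.3.1]{lumsdaine:thesis}) assigns to each triple $\Gamma.\Delta.\Theta$ an object $\Gamma.\mathsf{\Pi}(\Delta,\Theta)$ together with a map $\mathsf{app}_{\Delta,\Theta} \colon \Gamma.\mathsf{\Pi}(\Delta,\Theta).p_{\mathsf{\Pi}(\Delta,\Theta)}^*\Delta \to \Theta$ over $\Gamma.\Delta$ (via the exchange map), and that these data are required to be \emph{strictly stable} under substitution along maps into $\Gamma$. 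The first step is to write out explicitly what object $\Gamma.\mathsf{\Pi}(\Delta,\Theta.\Psi)$ is: here $\Theta.\Psi$ is viewed as a single context extension of $\Gamma.\Delta$ in $\mC^\cxt$, so $\mathsf{\Pi}(\Delta,\Theta.\Psi)$ is the $\mathsf{\Pi}$-type formed from the dependent projection $\Gamma.\Delta.\Theta.\Psi \to \Gamma.\Delta$.

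\textbf{Key steps.} First I would observe that by the strong-$\mathsf{\Sigma}$-flavoured reasoning the paper permits in $\mC^\cxt$ (cf.\ the footnote after the definition of categorical model of type theory), forming $\mathsf{\Pi}$ against a two-step extension $\Theta.\Psi$ is the same as currying: a term of $\mathsf{\Pi}(\Delta,\Theta.\Psi)$ in context $\Gamma$ is the same datum as a pair consisting of a term $t$ of $\mathsf{\Pi}(\Delta,\Theta)$ together with a term of $\mathsf{\Pi}(\Delta, \Psi)$ in the context where $\Theta$ has been specialized to $t$ applied to the $\Delta$-variable --- which is exactly $\mathsf{\Pi}(p_{\mathsf{\Pi}(\Delta,\Theta)}^*\Delta.\mathsf{app}_{\Delta,\Theta}^*\Psi)$ formed in context $\Gamma.\mathsf{\Pi}(\Delta,\Theta)$. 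Second, I would make this precise not as a bijection of terms but as an equality of objects by citing the standard fact that the $\mathsf{\Pi}$-structure, being given by operations in the essentially algebraic theory defining $\Cxl$, produces these objects by a fixed recipe from the projections involved, and that the projection $\Gamma.\Delta.\Theta.\Psi \to \Gamma.\Delta$ decomposes through $\Gamma.\Delta.\Theta$; the claimed equality is then the statement that "$\mathsf{\Pi}$ of a composite of two basic dependent projections equals the iterated $\mathsf{\Pi}$", which holds strictly when one uses the chosen pullbacks and the stability clauses of \Cref{def:fibration-category}'s predecessor, the contextual-category axioms, together with the $\mathsf{\Pi}$-stability axiom. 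Third, I would spell out the identification of the substituted $\Psi$: the context $\mathsf{app}_{\Delta,\Theta}^*\Psi$ over $\Gamma.\mathsf{\Pi}(\Delta,\Theta).p^*_{\mathsf{\Pi}(\Delta,\Theta)}\Delta$ is precisely the pullback of $\Psi$ along the canonical evaluation, and under currying this is what the inner $\Theta$-argument of $\mathsf{\Pi}(\Delta,\Theta.\Psi)$ gets replaced by; matching this up with the exchange maps $\mathsf{exch}$ introduced before \Cref{lem:right-adj-in-ClT} finishes the identification.

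\textbf{Main obstacle.} The genuine difficulty is entirely notational: keeping straight the several layers of pullback (along $p_\Delta$, along $p_{\mathsf{\Pi}(\Delta,\Theta)}$, and along $\mathsf{app}$) and verifying that the chosen pullbacks in $\mC^\cxt$ --- built by iterating pullbacks along basic dependent projections --- are literally the same objects on both sides rather than merely canonically isomorphic. I expect the equality "(and, in fact, equal)" in the statement to rest on the fact that all the operations in sight are part of the algebraic structure and hence respect the strict equalities $1^*_{\ft X}X = X$ and $(fg)^*X = g^*f^*X$; the parenthetical "in fact, equal" is exactly the claim that no fibrant replacement or coherence isomorphism intervenes. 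Consequently the proof should be short: set up the two recipes, invoke $\mathsf{\Pi}$-stability and the strict pullback-composition law of contextual categories, and conclude. If an outright equality turned out to require more care than the strict axioms provide, the fallback --- which still suffices for the application in \Cref{prop:ClT-is-lccfib} --- is to establish a canonical isomorphism over $\Gamma$ compatible with the respective counits, but I anticipate the stronger statement goes through directly.
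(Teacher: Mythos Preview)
Your proposal is correct and takes essentially the same approach as the paper: the paper's proof is a single sentence, ``That follows immediately by the construction of the $\mathsf{\Pi}$-structure on $\mC^\cxt$,'' and what you have written is exactly an unwinding of that construction, tracing how $\mathsf{\Pi}$ of a two-step extension in $\mC^\cxt$ is built by iterating the $\mathsf{\Pi}$-operation of $\mC$ and invoking the strict stability clauses. Your identification of the ``main obstacle'' as purely notational, and your observation that the parenthetical ``in fact, equal'' rests on the strict pullback laws of contextual categories together with $\mathsf{\Pi}$-stability, matches the paper's intent precisely.
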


\begin{proof}
 That follows immediately by the construction of the $\mathsf{\Pi}$-structure on $\mC^\cxt$.
\end{proof}

\begin{proof}[Proof of Proposition \ref{prop:ClT-is-lccfib}]
By \Cref{lem:right-adj-in-ClT}, for every dependent projection $p_A \colon (\Gamma. A) \to \Gamma$, the pullback functor $p_A^*$ has a right adjoint $(p_A)_*$. As a right adjoint, $(p_A)_*$ preserves finite limits, thus, in particular, pullbacks and the terminal object. Moreover, by \Cref{lem:def-ac}, it preserves fibrations and, by easy calculation using Functional Extensionality, weak equivalences.
\end{proof}

\begin{theorem} \label{thm:NfClT-is-lcc}
 Given a categorical model of type theory $\mC$, the quasicategory $\Nf \mC$ is locally cartesian closed.
\end{theorem}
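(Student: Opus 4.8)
The plan is to combine the two main results of this section, \Cref{thm:lccfib-gives-lccqcat} and \Cref{prop:ClT-is-lccfib}, with the comparison result \Cref{thm:Ho-Nf-coincide} that identifies $\Nf$ with the simplicial localization. First I would invoke \Cref{prop:ClT-is-lccfib}: given the categorical model of type theory $\mC$, its underlying category carries the structure of a fibration category by \Cref{thm:akl}, and that fibration category is locally cartesian closed in the sense of \Cref{def:lcc-fib-cat}. Then I would apply \Cref{thm:lccfib-gives-lccqcat} to this locally cartesian closed fibration category, which immediately yields that the quasicategory $\Nf\mC$ is locally cartesian closed.

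There is essentially no obstacle here: the statement is a direct corollary of the two preceding results, and the proof is a two-line citation. The only point worth a word of care is notational: the excerpt writes $\Nf\mC$ for the quasicategory of frames of the \emph{underlying fibration category} of $\mC$ (i.e.\ of $\U\mC$ equipped with the fibration-category structure of \Cref{thm:akl}), so I would make sure the statement and proof are phrased consistently with that convention --- in particular noting that the fibration category structure is the one transferred from $\mC^\cxt$ as described after \Cref{thm:akl}.

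Concretely, the proof I would write is: \emph{By \Cref{thm:akl}, the underlying category of $\mC$ carries the structure of a fibration category, and by \Cref{prop:ClT-is-lccfib} this fibration category is locally cartesian closed in the sense of \Cref{def:lcc-fib-cat}. Hence by \Cref{thm:lccfib-gives-lccqcat}, the quasicategory $\Nf\mC$ is locally cartesian closed.} This is the penultimate step before the final theorem \Cref{thm:main-second-statement} (which presumably reads: for any contextual category $\mC$, $\Ho_\infty\U\mC$ is locally cartesian closed), where one additionally uses that every contextual category --- even without $\mathsf{\Pi}$-, $\mathsf{\Sigma}$-, $\mathsf{Id}$-structure --- can be freely equipped with such structure, or more precisely that the relevant free/forgetful adjunction and \Cref{thm:Ho-Nf-coincide} transport local cartesian closedness of $\Nf\mC$ to $\Ho_\infty\U\mC$ along a categorical equivalence (local cartesian closure being invariant under categorical equivalence of quasicategories).
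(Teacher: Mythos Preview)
Your proof of this theorem is correct and matches the paper's own proof essentially verbatim: cite \Cref{prop:ClT-is-lccfib} to get that $\mC$ is a locally cartesian closed fibration category, then cite \Cref{thm:lccfib-gives-lccqcat} to conclude that $\Nf\mC$ is a locally cartesian closed quasicategory. (The reference to \Cref{thm:Ho-Nf-coincide} in your opening sentence is not needed here; you correctly drop it in the concrete proof you write.)

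One tangential correction: your speculation about \Cref{thm:main-second-statement} is off. There is no free/forgetful adjunction in play, and no need to freely add $\mathsf{\Pi}$-, $\mathsf{\Sigma}$-, $\mathsf{Id}$-structure to an arbitrary contextual category. The final theorem is stated for a \emph{categorical model of type theory} $\mC$ (the phrase ``contextual category'' in \Cref{thm:main-first-statement} is used informally for this), and its proof simply invokes \Cref{thm:Ho-Nf-coincide} to transport local cartesian closure from $\Nf\mC$ to $\Ho_\infty\U\mC$ along a categorical equivalence.
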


\begin{proof}
 By \Cref{prop:ClT-is-lccfib}, $\mC$ is a locally cartesian closed fibration category and hence, by \Cref{thm:lccfib-gives-lccqcat}, $\Nf\mC$ is a locally cartesian closed quasicategory.
\end{proof}

We are now ready to prove our main theorem:

\begin{theorem} \label{thm:main-second-statement}
 For any simplicial localization functor $\Ho_\infty \colon \weCat \to \sSet$ and any categorical model of type theory, the quasicategory $\Ho_\infty \U \mC$ is locally cartesian closed.
\end{theorem}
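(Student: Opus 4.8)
The plan is to transport the conclusion of \Cref{thm:NfClT-is-lcc} along the comparison with the quasicategory of frames. By \Cref{thm:akl}, the category $\U\mC$, equipped with the bi-invertible maps as weak equivalences, underlies a fibration category $\C$; as an object of $\weCat$ this $\C$ coincides with $\U\mC$, and $\Nf\C$ is precisely what \Cref{thm:NfClT-is-lcc} calls $\Nf\mC$, hence is locally cartesian closed. By \Cref{thm:Ho-Nf-coincide}, $\Ho_\infty\U\mC = \Ho_\infty\C$ is weakly equivalent to $\Nf\C$ in Joyal's model structure; since both are quasicategories (and all objects are cofibrant), they are in fact connected by a single categorical equivalence. (For a simplicial localization functor other than the one fixed in \Cref{sec:aht}, one first reduces to that functor via the unicity results recalled there.) It thus suffices to prove that being locally cartesian closed is invariant under categorical equivalence of quasicategories.

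So let $e \colon \D \weq \D'$ be a categorical equivalence with $\D'$ locally cartesian closed. The property of having finite limits passes from $\D'$ to $\D$ by a standard argument: $e$ induces categorical equivalences $\D^{K} \weq (\D')^{K}$ and $\D^{K^\triangleleft} \weq (\D')^{K^\triangleleft}$, compatibly with restriction along $K \into K^\triangleleft$, and universal cones in the sense of \Cref{def:limit} correspond under these equivalences; in particular $\D$ has finite limits and $e$ preserves pullbacks. Now fix a $1$-simplex $h \colon a \to b$ in $\D$ and let $h'$ be its image under $e$. Passing to slices, $e$ restricts to categorical equivalences $u \colon \D\slice b \weq \D'\slice e(b)$ and $v \colon \D\slice a \weq \D'\slice e(a)$. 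The square formed by $u$, $v$, $h^*$, and $(h')^*$ commutes up to $E[1]$-homotopy: each pullback functor is built from a section of an acyclic fibration and so is determined only up to $E[1]$-homotopy, and since $e$ preserves pullbacks, both composites $\D\slice b \to \D'\slice e(a)$ realize the operation of pulling back along $h$ and then applying $e$. As $E[1]$-homotopic maps become isomorphic in $\qCat_2$ and a categorical equivalence is an equivalence in $\qCat_2$, we obtain $h^* \iso v^{-1}\circ (h')^* \circ u$ in $\qCat_2$, for a chosen quasi-inverse $v^{-1}$ of $v$. Since $u$ and $v^{-1}$ are equivalences, hence left adjoints, and $(h')^*$ is a left adjoint by hypothesis, the composite --- and therefore $h^*$ --- admits a right adjoint. (Alternatively, one invokes \Cref{lem:adjoint-characterization}: the comma quasicategory $w \slice h^*$ is carried by $u$ and $v$ to the analogous comma quasicategory for $(h')^*$ up to categorical equivalence, so its initial object is obtained from that on the $\D'$-side.) Hence $\D$ is locally cartesian closed.

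Combining the two parts, $\Nf\mC$ is locally cartesian closed, and therefore so is the categorically equivalent quasicategory $\Ho_\infty\U\mC$ --- which is the assertion of the theorem. The step requiring genuine care, and thus the main obstacle, is the compatibility claim in the second paragraph: one must set up the slice equivalences $u$ and $v$ so that they intertwine the pullback functors, which are themselves specified only up to $E[1]$-homotopy. Once this is in place, transferring the property of admitting a right adjoint is the soft $2$-categorical fact that left adjoints are closed under composition with equivalences, with \Cref{lem:adjoint-characterization} available as an alternative route.
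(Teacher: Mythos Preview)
Your proof is correct and follows the same route as the paper: invoke \Cref{thm:NfClT-is-lcc} for $\Nf\mC$, then transfer along the comparison of \Cref{thm:Ho-Nf-coincide}. The paper's own proof is terser --- it simply asserts that since $\Nf\mC$ is locally cartesian closed and $\Ho_\infty\U\mC \simeq \Nf\mC$, the latter is too --- leaving the invariance of local cartesian closedness under categorical equivalence entirely implicit; you spell out that invariance (via the $2$-categorical argument with slice equivalences and composition of left adjoints), and you also make explicit the reduction to the fixed $\Ho_\infty$ for other simplicial localization functors, which the paper's proof does not address despite the ``for any'' in the statement.
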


\begin{proof}
 By \Cref{thm:Ho-Nf-coincide}, for any fibration category $\C$, there is an equivalence of quasicategories $\Nf\C$ and $\Ho_\infty\C$. Thus, since $\Nf\mC$ is locally cartesian closed by \Cref{thm:NfClT-is-lcc}, so is $\Ho_\infty\U \C$.
\end{proof}

\begin{remark}
Every homomorphism of models of type theory $F \colon \mC \to \mathsf{D}$ induces a finite-limit preserving functor $\Nf \U F$ of the associated quasicategories.
\end{remark}

\bibliographystyle{amsalphaurlmod}
\bibliography{general-bibliography}

\end{document}